\renewcommand{\theequation}{\arabic{section}.\arabic{equation}}
\newtheorem{Question}{Question}
\newtheorem{theorem}{Theorem}[section]
\newtheorem{lemma}[theorem]{Lemma}
\newtheorem{proposition}[theorem]{Proposition}
\newtheorem{corollary}[theorem]{Corollary}
\theoremstyle{definition} 
\newtheorem{definition}[theorem]{Definition}
\newtheorem{example}[theorem]{Example}
\theoremstyle{remark}
\newtheorem{remark}[theorem]{Remark}
\newcommand{\cC}{\mathcal{C}}
\newcommand{\cI}{\mathcal{I}}
\newcommand{\cJ}{\mathcal{J}}
\newcommand{\cU}{\mathcal{U}}
\newcommand{\cN}{\mathcal{N}}
\newcommand{\cS}{\mathcal{S}}
\newcommand{\cV}{\mathcal{V}}
\newcommand{\Dr}{\mathscr{D}}
\newcommand{\Fr}{\mathscr{F}}
\newcommand{\Gr}{\mathscr{G}}
\newcommand{\Hr}{\mathscr{H}}
\newcommand{\Kr}{\mathscr{K}}
\newcommand{\Lr}{\mathscr{L}}
\newcommand{\Sr}{\mathscr{S}}
\newcommand{\Bc}{\mathcal{B}}
\newcommand{\Dc}{\mathcal{D}}
\newcommand{\Kc}{\mathcal{K}}
\newcommand{\ccV}{{}^c\cV}
\newcommand{\ctX}{{}^cT\oX}
\newcommand{\ctsX}{{}^cT^*\oX}
\newcommand{\cun}{\cC^{\infty}}
\newcommand{\cunc}{\cC^{\infty}_c}
\newcommand{\cz}{\mathbb{C}}
\newcommand{\Diff}{\mathrm{Diff}}
\newcommand{\Diffc}{\mathrm{Diff}_c}
\newcommand{\ess}{\mathrm{ess}}
\newcommand{\im}{\mathrm{Im}}
\newcommand{\oX}{\overline{X}}
\newcommand{\px}{\partial_x}
\newcommand{\rz}{\mathbb{R}}
\newcommand{\supp}{\mathrm{supp}}
\newcommand{\C}{\mathbb{C}}
\newcommand{\R}{\mathbb{R}}
\newcommand{\N}{\mathbb{N}}
\newcommand{\qz}{\mathbb{Q}}
\newcommand{\siges}{\sigma_\ess}
\newcommand{\Tr}{\mathrm{Tr}}
\newcommand{\vol}{\operatorname{vol}}
\newcommand{\Vol}{\operatorname{Vol}}
\newcommand{\zz}{\mathbb{Z}}
\def\cchi{\raisebox{.45 ex}{$\chi$}}
\def\build#1_#2^#3{\mathrel{\mathop{\kern 0pt#1}\limits_{#2}^{#3}}}
\begin{document} 
\title[Schr\"odinger operators and Hodge Laplacians]{The spectrum of 
Schr\"odinger operators and Hodge Laplacians on conformally 
cusp manifolds}
 
\author{Sylvain Gol\'enia} 
\address{Mathematisches Institut der Universit\"at Erlangen-N\"urnberg
Bismarckstr.\ 1 1/2 \\
91054 Erlangen, Germany}
\email{golenia@mi.uni-erlangen.de} 
\author{Sergiu Moroianu} 
\thanks{The authors were partially supported by the
contract MERG 006375, funded through the European Commission.
The second author was partially supported from the contracts
2-CEx06-11-18/2006 and
CNCSIS-GR202/19.09.2006.}
\address{Institutul de Matematic\u{a} al Academiei Rom\^{a}ne\\ 
P.O. Box 1-764\\RO-014700 
Bucha\-rest, Romania} 
\email{moroianu@alum.mit.edu} 
\subjclass[2000]{58J40, 58Z05}
\keywords{Finite volume hyperbolic manifolds, Hodge theory,
Laplacian on forms,
cusp pseudodifferential operators, purely discrete spectrum,
absolutely continuous spectrum, Mourre estimate}
\date{\today} 
\begin{abstract} 
We describe the spectrum of the $k$-form Laplacian on conformally cusp
Riemannian manifolds. The essential spectrum is shown to vanish
precisely when the $k$ and $k-1$ de Rham cohomology groups of the
boundary vanish. We give Weyl-type asymptotics for the
eigenvalue-counting function in the purely discrete case. In the
other case we analyze the essential spectrum via positive commutator
methods and establish a limiting absorption principle. 
This implies the absence of the singular spectrum for a wide
class of metrics. We also exhibit 
a class of potentials $V$ such that the
Schr\"odinger operator has compact resolvent, although $V$ tends
to $-\infty$ in most of the infinity. We correct a statement
from the literature regarding the essential spectrum
of the Laplacian on forms on hyperbolic manifolds of finite volume, and
we propose a conjecture about the existence 
of such manifolds in dimension four whose cusps are rational homology spheres.

\end{abstract} 
\maketitle 
 
\section{Introduction} 
There exist complete, noncompact manifolds on which the
scalar Laplacian has purely discrete spectrum, see e.g.\
\cite{donelili}. The goal of this paper is to understand 
such phenomena for the Laplacian on differential forms in a more geometric 
setting. We aim to provide eigenvalue asymptotics whenever the 
spectrum is purely discrete, and to clarify the nature of the essential 
spectrum when it arises. 

We study $n$-dimensional non-compact Riemannian manifolds $X$ 
which outside a compact set are
diffeomorphic to a cylinder $(0,\infty)\times M$, where $M$ is a closed, 
possibly disconnected Riemannian manifold. The metric on $X$ near infinity 
is assumed  to be quasi-isometric to the unperturbed model metric 
\begin{align}\label{mc} 
g_p=y^{-2p}(dy^2+h),&&y\gg 0\end{align}
where $h$ is a metric on $M$ independent of $y$, and $p>0$. 
For $p=1$, this includes the case 
of finite-volume complete hyperbolic manifolds, which we discuss 
in some detail in the last section of the paper. 
The manifold $X$ is incomplete
if and only if $p>1$, and $\vol(X)<\infty$ if and only if $p>1/n$. For $p>1$, 
the metric \eqref{mc} is of metric-horn type as in \cite{lepe}.

We denote by $\Delta=d^*d+\delta^* \delta$ the Hodge Laplacian defined on 
smooth forms with compact support in $X$. It is a symmetric 
non-negative operator in $L^2(X, \Lambda^*X, g_p)$ and we also denote
by $\Delta$ its self-adjoint Friedrichs extension. If $p\leq 1$, i.e.\ 
if $(X,g_p)$ is complete, then $\Delta$ is essentially self-adjoint,
see \cite{Ga}.  Since $\Delta$ preserves the space of $k$-forms, we
can define $\Delta_{k}$  as its restriction to $\Lambda^kX$, which is
also self-adjoint.  

Let us advertise one application of our results. 
The essential spectrum of the Laplacian acting on forms on non-compact
manifolds has been extensively studied, as it provides
informations on the Hodge decomposition of the space of $L^2$
forms. Without attempting to give an exhaustive bibliography,
we mention here the papers \cite{bue, Carron, GW, lott, maz, mazphi}.
In Section \ref{s:betti}, we show:
\begin{proposition}\label{p:introhodge}
Let $(X,g)$ be a complete non-compact hyperbolic manifold of finite volume. 
Let $M$ be the boundary at infinity. 
Let $\tilde g$ be the metric $(1+\rho)g$, with 
$\rho\in\cC^\infty(X,\R)$, $\inf(\rho)>-1$ such that
\begin{eqnarray}
\rho(x)\rightarrow 0, \mbox{ as } x\rightarrow M \mbox{ and } 
\|d\rho\|_\infty<\infty
\end{eqnarray}
If $n=\dim(X)$ is odd, suppose also that the 
Betti number $b_{\frac{n-1}{2}}(M)$ vanishes. 
Then $\im(d)$ and $\im(\delta_{\tilde g})$ are closed, and  
\begin{equation*}
L^2(X, \Lambda^*X, \tilde g)= \ker(\Delta_{\tilde g})\oplus \im(d)\oplus
\im(\delta_{\tilde g}).	 
\end{equation*}
\end{proposition}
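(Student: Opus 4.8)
The plan is to obtain the strong Hodge decomposition from the weak (Kodaira) one, the extra content being that $\im(d)$ and $\im(\delta_{\tilde g})$ are already closed, and to reduce this closedness to the statement that $0$ does not belong to the essential spectrum of $\Delta_{\tilde g}$ in any degree.

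First I would note that $(X,\tilde g)$ is complete: since $\inf(\rho)>-1$ and $\rho(x)\to 0$ as $x\to M$, the factor $1+\rho$ is bounded between two positive constants, so $\tilde g$ is quasi-isometric to the complete metric $g$ and in particular complete. Hence $\Delta_{\tilde g}$ is essentially self-adjoint on forms (as for $g$, by \cite{Ga}), one has $\ker(d)\cap\ker(\delta_{\tilde g})=\ker(\Delta_{\tilde g})$ on $L^2$ forms, and the weak orthogonal decomposition
\[
L^2(X,\Lambda^*X,\tilde g)=\ker(\Delta_{\tilde g})\ \oplus\ \overline{\im(d)}\ \oplus\ \overline{\im(\delta_{\tilde g})}
\]
holds. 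It therefore suffices to show that $\im(d_{k-1})$ is closed for every $k$; since the Hodge star of $\tilde g$ is an $L^2$-isometry intertwining $d$ with $\pm\delta_{\tilde g}$, closedness of all $\im(d_{k-1})$ forces closedness of all $\im(\delta_{\tilde g})$ on $\Lambda^k$, and the weak decomposition becomes the asserted strong one.

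Next I would invoke the standard spectral criterion. If $0\notin\siges(\Delta_{\tilde g})$ on $\Lambda^k$, then there is $\varepsilon>0$ with $\Spec(\Delta_{\tilde g})\cap(0,\varepsilon)=\emptyset$ in that degree; since $dd^*$ and $d^*d$ commute on $\Lambda^k$ and $\Delta_{\tilde g}$ restricts to $dd^*$ on $\overline{\im(d_{k-1})}$, a Weyl-sequence argument shows $dd^*\ge\varepsilon$ on $\overline{\im(d_{k-1})}$, hence $d_{k-1}^*d_{k-1}\ge\varepsilon$ on $(\ker d_{k-1})^\perp$, i.e.\ $d_{k-1}$ has closed range. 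Thus everything reduces to $0\notin\siges(\Delta_{\tilde g})$ on $\Lambda^k$ for every $k$. Here the hyperbolic, finite-volume hypothesis and the conditions on $\rho$ enter: near each cusp $\tilde g$ is quasi-isometric and asymptotic to the model $y^{-2}(dy^2+h)$ on $(0,\infty)\times M$, and $\|d\rho\|_\infty<\infty$ together with $\rho\to0$ makes the perturbation tame enough that the description of $\siges(\Delta_k)$ obtained in this paper applies to $\tilde g$. For $k=0$ and $k=n$ the essential spectrum lies in $[(n-1)^2/4,\infty)$, which avoids $0$. For $0<k<n$, the bottom of the continuous spectrum produced by a $\Delta_M$-harmonic $j$-form on $M$ ($j\in\{k-1,k\}$) is strictly positive except when $n$ is odd and $j=(n-1)/2$, where it can vanish; imposing $b_{(n-1)/2}(M)=0$ annihilates those harmonic forms and keeps $0$ out of the essential spectrum in the middle degrees $k=\tfrac{n\pm1}{2}$ as well.

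The main obstacle is this last point: controlling the infimum of the essential spectrum of $\Delta_{\tilde g}$ on $k$-forms for the perturbed metric — i.e.\ verifying that the conformal factor $1+\rho$ does not drag any threshold down to $0$ — and correctly isolating the role of the middle Betti number of $M$ when $\dim X$ is odd. The remaining ingredients, namely the weak Kodaira decomposition on the complete manifold and the passage from a spectral gap at $0$ to closed range, are routine.
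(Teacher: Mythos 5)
Your outline is essentially the paper's own argument (Proposition \ref{p:hodge} in Section \ref{s:betti}): establish $0\notin\siges(\Delta_{\tilde g})$ in every degree, then convert the spectral gap at $0$ into closed ranges for $d$ and $\delta_{\tilde g}$ and hence the strong Hodge decomposition. The step you correctly flag as the ``main obstacle'' is exactly what Proposition \ref{p:stabess} in the Appendix supplies: the hypotheses $\rho\to 0$ and $\|d\rho\|_\infty<\infty$ give \eqref{e:asympto} and (through Remark \ref{r:stab0}) \eqref{e:asympto2}, so $\siges(\Delta_{\tilde g})=\siges(\Delta_g)$ on forms of every degree; then $0\notin\siges(\Delta_g)$ follows from the threshold computation of Proposition \ref{p:thema} with $p=1$, where the thresholds $c_0^2,c_1^2$ with $c_0=(2k+1-n)/2$, $c_1=(2k-1-n)/2$ vanish only when $n$ is odd and $b_{(n-1)/2}(M)\neq 0$. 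Until you invoke a stability result of that type your proof genuinely has a hole, since for an arbitrary conformal factor the essential spectrum of the form Laplacian need not be preserved. Two smaller remarks: the paper passes from the spectral gap to closed range by noting that $\im(\Delta_{\tilde g})$ is closed (closed graph theorem) and then citing \cite[Theorem 5.10]{bue}, which avoids your Weyl-sequence argument on $\overline{\im(d)}$ and the commutation of $dd^*$ with $d^*d$; and the Hodge-star trick you use to deduce closedness of $\im(\delta_{\tilde g})$ from that of $\im(d)$ requires orientability, which the statement does not assume --- invoke the closed range theorem for the adjoint pair $d$ and $\delta_{\tilde g}=d^*$ instead.
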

This proposition follows from the fact that
the essential spectrum $\siges(\Delta)$ of $\Delta$ does not contain $0$. 
For hyperbolic manifolds of finite volume, $\siges(\Delta)$
was computed by Mazzeo and Phillips \cite[Theorem 1.11]{mazphi},  
but their statement contains a gap, see Section \ref{s:betti} 
for geometric counterexamples. In \cite{francesca1}, 
Antoci also computes $\siges(\Delta_k)$ but for technical reasons 
she is unable to decide whether $0$ is isolated in the essential 
spectrum of $\Delta_k$ except in the case where $M=S^{n-1}$ with the
standard metric. For the metric \eqref{mc} and for a general
$M$, we compute $\siges(\Delta_k)$ in Proposition \ref{p:thema} and
we deduce that $0$ is never isolated in it for any $k$. 
In the Appendix, we also give a self-contained proof of the stability 
of the essential spectrum of the Laplacian  acting on forms for 
a large class of perturbations of the metric.

We investigate first in this paper
the absence of the essential spectrum and 
improve along the way the results of \cite{francesca1}.
We replace the condition $M=S^{n-1}$ with a 
weaker topological condition on the boundary at infinity $M$. Related results
were obtained in \cite{baer, GMo1, wlom}. The following theorem holds
for the conformally cusp metric  \eqref{cume2}, a generalization of
\eqref{mc}. To our knowledge, the result on eigenvalue asymptotics in this 
context is entirely new.

\begin{theorem}\label{t:Ith}
Let $X$ be a $n$-dimensional conformally cusp manifold 
(Definition \ref{dccm}) for some $p>0$. Fix an integer $k$ between 
$0$ and $n$. If the Betti numbers $b_k(M)$ and $b_{k-1}(M)$ 
of the boundary at infinity $M$ both vanish, then:
\begin{enumerate} 
\item the Laplacian $\Delta_k$ acting on $k$-forms on $X$ 
is essentially self-adjoint in $L^2$ for the metric $g_p$;
\item the spectrum of $\Delta_{k}$ is purely discrete;
\item the asymptotic of its eigenvalues, in the limit $\lambda\to\infty$,
is given by 
\begin{equation}\label{e:Ithmag}
N_{p}(\lambda) \approx \begin{cases}
C_1\lambda^{n/2}&
\text{for $1/n< p$,}\\
C_2\lambda^{n/2}\log \lambda &\text{for $p=1/n$,}\\
C_3\lambda^{1/2p}&\text{for $0<p<1/n$}.
\end{cases}
\end{equation}
\end{enumerate} 
\end{theorem}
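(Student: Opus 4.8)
The plan is to reduce the three assertions to a single analytic mechanism: the hypothesis $b_k(M)=b_{k-1}(M)=0$ forces the tangential Laplacian on $M$ to be bounded below by a positive constant on the relevant form-degrees, which in turn makes the effective one-dimensional Schrödinger operators coming from separation of variables have purely discrete spectrum growing fast enough to control the full operator. First I would introduce the cusp calculus: near infinity write the metric as in \eqref{mc} (or \eqref{cume2}) and change the variable to $x=y^{-1}$ (or an appropriate $x$ depending on $p$) so that $\{x=0\}$ is the cusp end. Decomposing a $k$-form on the cylinder as $\alpha(x) + dx\wedge\beta(x)$ with $\alpha\in\Omega^k(M)$, $\beta\in\Omega^{k-1}(M)$, the Hodge Laplacian conjugated by the natural unitary $L^2(X,g_p)\to L^2((0,\varepsilon)\times M,\,dx\otimes dh)$ becomes, modulo lower-order terms, a direct sum over the eigenspaces of $\Delta_M^{(k)}$ and $\Delta_M^{(k-1)}$ of ordinary differential operators of Bessel type, of the schematic form $-\partial_x^2 + x^{-2}\bigl(\mu + c(p)\bigr) + \dots$ where $\mu$ runs over $\mathrm{Spec}(\Delta_M)$. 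This is the standard conformally-cusp separation of variables; I would cite or reproduce it from \cite{GMo1} or from the model computation in the body of the paper.

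The key point for parts (1) and (2) is that when $b_k(M)=b_{k-1}(M)=0$, zero is \emph{not} an eigenvalue of $\Delta_M$ on $k$- or $(k-1)$-forms (Hodge theory on the closed manifold $M$), so every $\mu$ occurring is $\geq\mu_0>0$. Then each reduced operator is a Bessel-type operator $-\partial_x^2 + (\nu^2-\tfrac14)x^{-2}+\dots$ with $\nu^2-\tfrac14$ bounded below by a positive constant uniformly in the channel; the limit-point case at $x=0$ holds in every channel, giving essential self-adjointness (1), and the potential $x^{-2}$-barrier together with the confining geometry makes the resolvent compact, giving (2). More precisely I would show the form domain embeds compactly into $L^2$ by a Rellich-type argument: on $\{x>\varepsilon\}$ compactness is classical, and on $\{x<\varepsilon\}$ the term $\mu_0 x^{-2}$ (which $\to+\infty$) plus control of the $\partial_x$-derivative gives the compact embedding, exactly the mechanism of \cite{donelili}. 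One must also handle the finitely many channels where $\mu$ is a nonzero eigenvalue carefully versus the continuous part of $\mathrm{Spec}(\Delta_M)$, but uniform positivity makes this routine.

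For part (3), the Weyl asymptotics, I would use the Dirichlet–Neumann bracketing / heat-kernel approach. Split $X$ into the compact piece and the cusp $(0,\varepsilon)\times M$; on the compact piece the counting function is $O(\lambda^{n/2})$ with the standard Weyl constant, so the asymptotics are governed by the cusp. There one sums the eigenvalue counts of the model Bessel operators over the spectrum of $\Delta_M$: for the operator $-\partial_x^2 + a x^{-2}$ on $(0,\varepsilon)$ the $j$-th eigenvalue behaves like $(\pi j/\varepsilon)^2$ for large $j$, but the \emph{number} of channels with bottom eigenvalue $\leq\lambda$ is controlled by Weyl's law for $\Delta_M$ on $M$ combined with the $x$-dependent rescaling induced by $g_p$. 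Carrying the substitution through, the volume growth of $(X,g_p)$ near the cusp — which is finite for $p>1/n$, logarithmically divergent for $p=1/n$, and power-divergent for $p<1/n$ — produces exactly the three regimes in \eqref{e:Ithmag}, with $C_1$ the usual Weyl constant $(2\pi)^{-n}\omega_n\binom{n}{k}\vol(X)$ and $C_2, C_3$ computed from the $(n-1)$-dimensional Weyl law on $M$ times the divergent weight.

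The main obstacle I expect is part (3): getting a genuine two-term-free asymptotic equivalence (the "$\approx$") rather than just two-sided bounds requires that the error from the off-diagonal/subleading terms in the separated Laplacian, and from the transition region $x\approx\varepsilon$, be of strictly lower order than each of the three main terms — and the borderline case $p=1/n$, where the power of $\lambda$ is the same as the compact contribution but multiplied by $\log\lambda$, is delicate because one must show the compact-manifold Weyl term and the finite part of the cusp integral do \emph{not} contaminate the $\lambda^{n/2}\log\lambda$ coefficient. I would handle this by a careful min-max comparison with exactly solvable model operators on $(0,\varepsilon)\times M$ with Dirichlet and Neumann conditions at $x=\varepsilon$, letting $\varepsilon\to0$ after $\lambda\to\infty$, and invoking Karamata's Tauberian theorem applied to the trace of the heat semigroup of the model, whose small-time asymptotics can be read off from the explicit cusp heat kernel.
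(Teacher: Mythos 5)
Your route is genuinely different from the paper's and is not a mere paraphrase, but it has a concrete gap that prevents it from reaching the stated generality. The paper proves Theorem~\ref{t:Ith} through the cusp pseudodifferential calculus: Proposition~\ref{th4} computes the normal operator $\cN(d)(\xi)$ and shows that $x^{2p}\Delta_k$ is \emph{fully elliptic} precisely when $H^k_{\dR}(M)=H^{k-1}_{\dR}(M)=0$, after which Corollary~\ref{cor3.2} yields (1) and (2) from the existence of a parametrix in $x^{2p}\Psi_c^{-2}$, and (3) follows from the meromorphic continuation of $\zeta(z)=\Tr(\Delta_k^{-z/2})$ in \cite{wlom}, whose first pole (simple for $p\neq 1/n$, double for $p=1/n$) pins down the exponents and coefficients via a Tauberian lemma. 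You instead propose separation of variables into $\Delta_M$-eigenspaces, Bessel-type one-dimensional operators, a limit-point argument, a Rellich-type compactness argument, and Dirichlet--Neumann bracketing plus Karamata for the Weyl law.

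The essential gap is that your channel decomposition assumes the cusp metric decouples into a product form, which is true for the model metric \eqref{mc} and for exact metrics in normal form \eqref{gp'}, but \emph{not} for a general conformally cusp metric \eqref{cume2}, which Theorem~\ref{t:Ith} explicitly covers (and the authors point out that the theorem cannot be deduced from the exact-case analysis of Section~\ref{nonem}). When $\alpha_0\neq 0$ or $h=h(x)$ depends on $x$, the de Rham complex on the cylinder mixes the $\Lambda^k M$ and $\Lambda^{k-1}M$ pieces and couples different $\Delta_M$-eigenspaces, so there is no diagonal family of Bessel operators to bracket. Handling this coupling is exactly what the normal-operator computation in Proposition~\ref{th4} is for; in particular the conjugation $\cN(d)(0)=\bigl(\begin{smallmatrix}1&-\alpha_0\wedge\\0&1\end{smallmatrix}\bigr)\bigl(\begin{smallmatrix}d^M&0\\0&-d^M\end{smallmatrix}\bigr)\bigl(\begin{smallmatrix}1&\alpha_0\wedge\\0&1\end{smallmatrix}\bigr)$ is the step that identifies the obstruction with $H^k(M)\oplus H^{k-1}(M)$, and no analogue of it appears in your outline. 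Even setting $\alpha_0=0$, your schematic radial operator $-\partial_x^2+x^{-2}(\mu+c(p))$ is not the correct form for general $p$: from \eqref{cndkp} the high-energy potential scales like $\mu\,x^{-2p}$, and the centrifugal term coming from the measure weight is $c_i^2x^{2-2p}$ (Proposition~\ref{p:free}), so a ``uniform Bessel coupling $\geq 3/4$'' limit-point criterion does not apply for $p\neq 1$. For $p>1$ the manifold is incomplete and essential self-adjointness is not stable under quasi-isometry, so the perturbation-to-model step you call routine is in fact the hard part; for $p\leq 1$ it is automatic by Gaffney and the Betti hypothesis is irrelevant to (1), which your argument does not reflect. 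Finally, for (3) the bracketing plus Karamata strategy is plausible for the power-law exponents, but producing the precise constants \eqref{e:Ic1}, \eqref{e:Ic2}, \eqref{C_3}, especially the $\log\lambda$ coefficient at the borderline $p=1/n$, is where the zeta-function approach via the double pole of \cite[Proposition~14]{wlom} does real work that your sketch leaves entirely to ``careful accounting''; and the paper only computes $C_3$ under an additional exactness assumption, a subtlety your proposal does not register.
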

Note that the hypothesis $b_k(M)=b_{k-1}(M)=0$ does not hold
for $k=0,1$; it also does not hold for $k=n, n-1$ if $M$ has at least 
one orientable connected component. In particular, Theorem \ref{t:Ith} does
not apply to the Laplacian acting on functions. 
We refer to Section \ref{s:betti} for a discussion of this hypothesis,
implications about hyperbolic manifolds and open problems in dimension 
$4$ and higher.

The constants $C_1$, $C_2$ are given by \eqref{e:Ic1}, \eqref{e:Ic2}. Up to a
universal constant which only depends on $\dim(X)$, they are just
the volume of $(X,g_p)$ in the finite volume case $p>1/n$ (here we get 
the precise form of the Weyl law for closed manifolds), respectively 
the volume of the boundary at infinity $M$ with respect to a naturally 
induced metric in the case $p=1/n$.
When the metric $g_0$ is exact (see Definition \ref{defex}; this includes 
the metric \eqref{mc}), $C_3$ is 
given by \eqref{C_3}. 

We stress that $\Delta_k$ is essentially self-adjoint and has purely 
discrete spectrum solely based on the  hypothesis 
$b_k(M)=b_{k-1}(M)=0$, without any condition like completeness 
of the metric or finiteness of the volume, see Corollary
\ref{cor3.2}. Intuitively, the continuous  spectrum of $\Delta_k$ is 
governed by zero-modes of the form Laplacian  on $M$ in dimensions $k$
and $k-1$ (both dimensions are involved  because of algebraic
relations in the exterior algebra). By Hodge theory, the kernel of
the $k$-form Laplacian on the compact manifold  $M$ is isomorphic to
$H^k(M)$, hence these zero-modes (harmonic forms)  
exist precisely when the Betti numbers do not vanish. 
We remark that, in the study of the scalar magnetic Laplacian \cite{GMo1} 
and of the Dirac operator \cite{baer}, the r\^ole of the Betti numbers 
was played by an integrality condition on the magnetic field, 
respectively by a topological condition on the spin structure on the cusps.

We are now interested in some refined -- and less studied -- properties of the
essential spectrum, namely the absence of singularly continuous 
spectrum and weighted estimates of the resolvent, 
which gives non-trivial dynamical properties on the group $e^{it\Delta_k}$. 
For the metric
\eqref{mc}, in the complete case, a refined analysis was started in
\cite{francesca2}. We determine the nature
of the essential spectrum by positive commutator techniques. 
The case of the Laplacian on functions has been treated originally by
this method in \cite{FH}, and by many other methods in the literature,
see for instance \cite{Guillope, Kumura} for different
approaches. In \cite{GMo1} we introduced a conjugate operator 
which was ``local in energy'', in order to deal with a bigger class of
perturbations of the metric. We use the same idea here, however 
the analysis of the Laplacian on $k$-forms turns out to be more involved
than that of the scalar magnetic Laplacian. Indeed,
one could have two thresholds and the positivity is 
harder to extract between them. The difficulty arises from the compact part
of the manifold, since we can diagonalize the operator only
on the cusp ends. The resolvent of the operator does \emph{not} stabilize this
decomposition. To deal with this, we introduce a perturbation of the
Laplacian which uncouples the compact part from the cusps in a gentle way. 

Let $L$ be the operator on $\cunc(X,\Lambda^* X)$ of multiplication by
a fixed smooth function $L:X\to [1,\infty)$ defined by:
\begin{align}\label{e:L}
 L(y):=\begin{cases} 
\ln(y)& \text{for $p=1$},\\ 
\frac{y^{1-p}}{1-p}& \text{for $p< 1$},
\end{cases} && \mbox{ for } y\geq y_0 \mbox{ and for some } y_0\geq 1.
\end{align}  
 Given $s\geq 0$, let
$\Lr_{s}$ be the domain of $L^s$ equipped with the graph norm. We set
$\Lr_{-s}:=\Lr_{s}^*$ where the adjoint space is defined so that
$\Lr_{s}\subset  L^2(X,\Lambda^* X, g_p) \subset \Lr_{s}^*$, using the
Riesz lemma.  Given $I\subset \R$, let $I_{\pm}$ be the set of complex
numbers $a\pm ib$, where $a\in I$ and $b>0$.  

Perturbations of \emph{short-range} (resp.\ \emph{long-range}) type
are denoted  with the subscript ${\rm sr}$ (resp.\ ${\rm lr}$); they
are supported in $(2, \infty)\times M$. We ask long-range
perturbations to be \emph{radial}. In other words, a perturbation
$W_{\rm lr}$ satisfies $W_{\rm lr}(y,m)= W_{\rm lr}(y,m')$ for all
$m,m'\in M$. 

\begin{theorem}\label{t:Ith1}
Let $\varepsilon>0$. We consider the metric $\tilde g=(1+\rho_{\rm sr} +
\rho_{\rm lr})g_p$, with $0<p\leq 1$ and where the short-range and
long-range components satisfy 
\begin{equation}\label{e:Iconf}\begin{split}
L^{1+\varepsilon}\rho_{\rm sr },
d\rho_{\rm sr } \text{ and } \Delta_g\rho_{\rm sr } \in L^\infty(X),\\
L^{\varepsilon}\rho_{\rm lr }\text{, } 
 L^{1+\varepsilon}d\rho_{\rm lr } \text{ and } \Delta_g \rho_{\rm lr } 
\in L^\infty(X).
\end{split}\end{equation}
Suppose that at least one of the two Betti numbers $b_k(M)$ and $b_{k-1}(M)$ 
is non zero . Let $V= V_{\rm loc}+ V_{\rm sr}$ and $V_{\rm
lr}$ be some potentials, where $V_{\rm loc}$ is measurable with
compact support and $\Delta_{k}$-compact, and $V_{\rm sr}$ and
$V_{\rm lr}$  are in $L^\infty(X)$ such that:      
\begin{equation*}
\|L^{1+\varepsilon}V_{\rm sr}\|_\infty<\infty,\, V_{\rm lr}\rightarrow
0, \mbox{ as } y\rightarrow +\infty  \mbox{  and }
\|L^{1+\varepsilon}d V_{\rm  lr}\|_\infty<\infty. 
\end{equation*} 
Consider the Schr\"odinger operators  $H_0=
\Delta_{k, p}+V_{\rm lr}$ and $H=H_0 + V$. Then 
\begin{enumerate}
\item The essential spectrum of $H$ is $[\inf\{\kappa(p)\},\infty)$,
where the set of thresholds $\kappa(p)\subset \rz$ is
defined in \eqref{e:kappa}. \label{a)}
\item $H$ has no singular continuous spectrum. \label{b)}
\item The eigenvalues of $H$ have finite multiplicity and no
accumulation points outside $\kappa(p)$.
 \label{c)}
\item \label{d)} Let $\cJ$ be a compact interval such that $\cJ\cap
\big(\kappa(p)\cup  \sigma_{\rm pp}(H)\big)=\emptyset$. Then, for
all $s$ in $(\frac12, \frac32)$, there exists $c$ such that  
\begin{equation*}
\|(H-z_1)^{-1} - (H-z_2)^{-1} \|_{\Bc(\Lr_s, \Lr_{-s})} \leq c
|z_1-z_2|^{s-1/2}, 
\end{equation*}
for all $z_1, z_2 \in \cJ_{\pm}$.
\item \label{e)} Let $\cJ=\R\setminus \kappa(p)$ and let $E_0$ and $E$ be the
continuous spectral component of $H_0$ and $H$, respectively. Then,
the wave operators defined as the strong limit
\begin{equation*}
\Omega_{\pm}=\mathrm{s-}\!\!\!\!\lim_{t\rightarrow \pm \infty} e^{itH}e^{-itH_0}E_0(\cJ) 
\end{equation*} 
exist and are complete, i.e.\ $\Omega_{\pm} \Hr= E(\cJ)\Hr$, where
$\Hr=L^2(X, \Lambda^k X, g_p)$.
\end{enumerate} 
\end{theorem}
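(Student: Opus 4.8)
The plan is to reduce the five assertions to the abstract conjugate operator (Mourre) machinery and then supply the geometric input special to conformally cusp metrics. Recall that, once one has a self-adjoint conjugate operator $A$, a strict Mourre estimate $E_\cJ(H)[H,iA]E_\cJ(H)\geq c\,E_\cJ(H)+K$ (with $c>0$ and $K$ compact) on every compact interval $\cJ$ avoiding the thresholds, together with enough regularity of $H$ with respect to $A$, one gets: the virial theorem and limiting absorption principle give (b) and (c); the refined (optimal) version of the theory gives the H\"older bound (d) with $s\in(\frac12,\frac32)$ and exponent $s-\frac12$ precisely from $H\in C^{1+\varepsilon}(A)$; and (e) follows from the limiting absorption principle applied simultaneously to $H_0$ and $H$ together with the decay of $V$ measured in $\Bc(\Lr_s,\Lr_{-s})$, by the standard smooth-perturbation argument for existence and completeness of the wave operators. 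So the real work is: build $A$, prove the Mourre estimate away from $\kappa(p)$, check the regularity, and compute $\siges(H)$.

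For (a) and the structural picture: near infinity I would pass to the coordinate $x=L$, in which $\px$ is, up to lower-order terms, the unit radial field, and unitarily trivialize the relevant half-density/bundle factor. In this model $\Delta_{k,p}$ becomes, modulo short-range terms, $-\px^2$ plus a $y$-dependent operator built from the Hodge Laplacian on $M$ acting on $k$- and on $(k-1)$-forms (both degrees appear because of the $dy\wedge$ component of a $k$-form). Decomposing $\Lambda^*M$ into $\ker\Delta^M$ and its orthogonal complement, the complement contributes on each eigenspace a half-line Schr\"odinger operator whose potential tends to $+\infty$, hence has purely discrete spectrum, while the harmonic part contributes the ``free channels'' whose thresholds form exactly the finite set $\kappa(p)$ of \eqref{e:kappa}; these channels are non-empty precisely when $b_k(M)\neq0$ or $b_{k-1}(M)\neq0$. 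This yields $\siges(H_0)=[\inf\kappa(p),\infty)$, and since $V_{\rm loc}$ is $\Delta_k$-compact and $V_{\rm sr},V_{\rm lr}$ vanish at infinity, $V$ is $H_0$-compact, whence $\siges(H)=\siges(H_0)$, proving (a).

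For the conjugate operator and the Mourre estimate I would take $A=\Theta\,\tfrac12(LP+PL)\,\Theta$ with $P=-i\px$ and $\Theta=\theta(H_0)\chi(y)$ localizing in a compact energy window disjoint from $\kappa(p)$ and in the cusp region $y\geq y_0$ — the ``local in energy'' choice of \cite{GMo1}. The obstacle flagged in the introduction is that the resolvent does not respect the decomposition into compact core and cusp ends. I would handle this by introducing a comparison operator $\hat H_0=H_{\comp}\oplus\bigoplus_{\text{cusps}}H_{\text{cusp}}$ obtained by a gentle modification of the metric in a collar; $H_0-\hat H_0$ is compactly supported and relatively bounded with relatively compact $A$-commutator, so the Mourre estimate for $\hat H_0$ transfers to $H_0$. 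On each cusp, $[H_{\text{cusp}},iA]$ equals, modulo a compact term and the energy cutoff, something $\approx 2(H_{\text{cusp}}-\kappa)$, strictly positive as soon as one stays at positive distance above the relevant thresholds; and the energy localization $\theta(H_0)$ separates the (possibly two, at different heights depending on $p$) channels attached to $b_k(M)$ and $b_{k-1}(M)$, which is how one recovers positivity in the window \emph{between} the two thresholds. This is the technical heart of the argument, and the main difficulty.

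Finally, for regularity and the perturbations: an explicit commutator computation shows $\hat H_0$, hence $H_0$, is of class $C^\infty(A)$; the metric terms $\rho_{\rm sr},\rho_{\rm lr}$ enter $H_0$ and $[H_0,iA]$ through quantities controlled by $L^{1+\varepsilon}\rho_{\rm sr},\,d\rho_{\rm sr},\,\Delta_g\rho_{\rm sr}$ and by $L^{\varepsilon}\rho_{\rm lr},\,L^{1+\varepsilon}d\rho_{\rm lr},\,\Delta_g\rho_{\rm lr}$, all in $L^\infty$ by hypothesis — and it is the radiality of the long-range part that forces $[\,\cdot\,,A]$, which differentiates in $y=L$, to land on $d\rho_{\rm lr}$ and hence to be short-range. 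Likewise $V_{\rm loc}$ contributes a compact term, $V_{\rm sr}$ a short-range one (via $L^{1+\varepsilon}V_{\rm sr}\in L^\infty$) and $V_{\rm lr}$ a long-range one (via $L^{1+\varepsilon}dV_{\rm lr}\in L^\infty$). Hence $H=H_0+V\in C^{1+\varepsilon}(A)$ and satisfies a strict Mourre estimate on every compact $\cJ$ with $\cJ\cap\kappa(p)=\emptyset$. Mourre's theorem then yields (b) and (c), its optimal refinement yields (d), and the limiting absorption principle for the pair $(H_0,H)$ together with the short-range decay of $V$ yields (e) on $\cJ=\R\setminus\kappa(p)$.
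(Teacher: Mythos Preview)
Your overall architecture is right and matches the paper: reduce to Mourre theory, build a conjugate operator supported in the cusp, prove a strict Mourre estimate away from $\kappa(p)$, and feed the metric and potential perturbations through short-range/long-range criteria. But two of your key technical choices differ from the paper in ways that matter, and one of them is a genuine gap.

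First, the conjugate operator. You take $A=\Theta\,\tfrac12(LP+PL)\,\Theta$ with $\Theta=\theta(H_0)\chi(y)$, an \emph{energy}-localized dilation generator. The paper instead uses $S_R=\tilde\cchi\big(\Phi_R(-i\partial_r)\xi+\xi\Phi_R(-i\partial_r)\big)\otimes P_0\,\tilde\cchi$ with a compactly supported \emph{momentum} cutoff $\Phi_R$ and the finite-rank projection $P_0$ onto $\ker\Delta^M_k\oplus\ker\Delta^M_{k-1}$. The factor $P_0$ is what makes the first and second commutators bounded (Lemma~\ref{l:commu}); the compact support of $\Phi_R$ is what gives $\Dc(L^s)\subset\Dc(|S_R|^s)$ (Lemma~\ref{l:sa}), which is precisely how the $L^{1+\varepsilon}$ decay hypotheses on $\rho$ and $V$ become the regularity class $\cC^{1,1}(S_R)$ needed for (b)--(e). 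With $\theta(H_0)$ sitting inside $A$, checking $H\in\cC^1(A)$ is circular, and you lose the $L$-to-$A$ comparison that drives (d). Also, the hypotheses deliver $\cC^{1,1}$, not $\cC^{1+\varepsilon}$ or $\cC^\infty$.

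Second, and more seriously, the decoupling. You propose $\hat H_0=H_{\comp}\oplus\bigoplus H_{\rm cusp}$ ``by a gentle modification of the metric in a collar''; but a metric modification never produces a direct sum, and imposing a boundary condition to force one is exactly what the paper flags as ``too singular a perturbation'' (see the discussion before \eqref{e:tildeH_0}). The paper's solution is finer: it decouples \emph{only the low-energy part} via the Friedrichs extension $\dot\Delta_k$ with the boundary condition $P_0 f|_{x=\varepsilon}=0$. Because $P_0$ has finite rank, $\theta(\Delta_k)-\theta(\dot\Delta_k)$ is compact by Krein's formula; and because $S_R$ itself carries the factor $P_0$, one has $[\dot\Delta_k,iS_R]=[\Delta_k,iS_R]$ on the nose. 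This is what allows the channel-by-channel positivity argument between the two thresholds while keeping control of the compact core. Your sketch does not supply a mechanism with these two properties.
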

The statement \eqref{a)} remains true for a wide family
of metrics asymptotic to  \eqref{mc}, see Proposition
\ref{p:thema}. The fact that every eigenspace  
is finite-dimensional (in particular, for the eigenvalue $\kappa(p)$, 
the bottom of the continuous spectrum) is due to the general 
\cite[Lemma B.1]{GMo1} and holds for an arbitrary {conformally} cusp metric
\eqref{cume2}.  
The proof of the rest, \eqref{b)}--\eqref{e)} where in \eqref{c)} 
we consider eigenvalues with energy 
different from $\kappa(p)$, relies on the Mourre theory \cite{ABG, mou} 
with an improvement for the regularity of the boundary value of the resolvent, 
see \cite{ggm} and references therein. These wide classes of
perturbation of the metric have been introduced in
\cite{GMo1}, however here the treatment of long-range
perturbations is different from the one in \cite{GMo1}. 
We prove these facts in  Section \ref{s:SRLR}. 
Note that this general class of metrics seems to be difficult 
to analyze by standard scattering techniques.

We now turn to question of the perturbation of the Laplacian $\Delta_k$ by 
some non relatively compact potential. In the Euclidean $\R^n$, using
Persson's formula,  one sees that $\sigma_{\rm ess}(H)$ is empty for
$H=\Delta+V$ if $V\in L^\infty_{\rm loc}$ tends to $\infty$ at
infinity. However, the converse is wrong as noted in \cite{S83} by
taking $V(x_1, x_2)=x_1^2x_2^2$ which gives rise to a compact 
resolvent. Morally speaking, a particle can not escape in the
direction of finite energy at infinity which is too narrow compared to
the very attracting part of $V$ which tends to infinity.  In our
setting, the space being smaller at infinity, it is easier to create
this type of situation even if most of the potential tends to
$-\infty$. To our knowledge, the phenomenon is new. The general statement 
appears in Proposition \ref{thschr}.

\begin{proposition}\label{p:intro}
Let $p>0$ and $(X,g_p)$ be a conformally cusp manifold.
Let $V$ in $ y^{2p}\cun(\oX)$ be a smooth potential with
Taylor expansion 
$y^{-2p}V=V_0+y^{-1}V_1+O(y^{-2})$ at infinity. Assume that
$V_0$ is non-negative and not identically zero on any connected 
component of $M$. Then the Schr\"odinger  operator $\Delta_{k}+V$
is essentially self-adjoint and 
has purely discrete spectrum. The eigenvalues  obey the generalized
Weyl law \eqref{e:Ithmag} and the constants $C_1,C_2$ do not depend on $V$. 
\end{proposition}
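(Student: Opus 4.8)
The plan is to reduce the claim about $\Delta_k + V$ to the discreteness statement of Theorem~\ref{t:Ith} by a careful comparison argument on the cusp ends, exploiting the positivity of $V_0$. First I would write the operator near infinity in the cusp coordinates: using the model metric $g_p = y^{-2p}(dy^2 + h)$, conjugate $\Delta_{k,p}$ by the natural unitary identification with a cylinder and expand it in powers of $y^{-1}$, exactly as one does to prove Theorem~\ref{t:Ith}. The term $V$ contributes, after this rescaling, the multiplication operator $y^{-2p}V = V_0 + y^{-1}V_1 + O(y^{-2})$; since $V \in y^{2p}\cC^\infty(\oX)$, all these coefficients are smooth up to the boundary and $V_0 \in \cC^\infty(M)$. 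So on each cusp the operator $\Delta_{k,p} + V$ looks, to leading order, like a harmonic-oscillator-type operator in the $y$-variable plus $V_0(m)$ acting fiberwise, and the essential-self-adjointness part will follow from the same cutoff/commutator estimates used in Theorem~\ref{t:Ith}, with $V_{\mathrm{loc}}$-type remainders absorbed since $V_1$ and the $O(y^{-2})$ tail are lower order.

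The core point is the discreteness of the spectrum, and I would establish it via Persson's formula / a Rellich-type criterion: it suffices to show that for every $\lambda\in\R$ the form domain restricted to functions supported in $\{y > R\}$ satisfies $\langle (\Delta_k + V)u, u\rangle \geq (\lambda + o(1))\|u\|^2$ as $R\to\infty$, i.e.\ $\inf \sigma_{\mathrm{ess}}(\Delta_k + V) = +\infty$. Decompose an $L^2$ section on a cusp end into its ``harmonic part'' along the fibers (the $\ker$ of the fiberwise form Laplacian on $M$ in degrees $k$ and $k-1$, which is governed by $b_k(M)$ and $b_{k-1}(M)$) and its orthogonal complement. On the orthogonal complement, the fiberwise Laplacian has a spectral gap, which after the $y^{-2p}$ conformal factor produces a term growing like $y^{2p}$ or at least bounded below by a positive constant — this is precisely the mechanism already present in Theorem~\ref{t:Ith} and gives that part no essential spectrum. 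The new ingredient handles the harmonic part: there the fiberwise Laplacian contributes nothing, but now $V_0 \geq 0$ enters, and one must show $V_0$ cannot be ``invisible'' to the harmonic sections. Here I would use the hypothesis that $V_0$ is not identically zero on any component of $M$: multiplication by $V_0$ on the finite-dimensional space $\mathcal{H}^k(M)\oplus\mathcal{H}^{k-1}(M)$ of harmonic forms is a nonnegative operator, and I would need a lemma that it is in fact \emph{strictly positive} on this space. This is the step I expect to be the main obstacle, because $V_0 \geq 0$ with some zeros does not a priori imply $\int_M V_0\,|\omega|^2 > 0$ for a harmonic form $\omega$ unless $\omega$ cannot vanish on the (open, dense) set $\{V_0 > 0\}$; I would argue this via unique continuation for harmonic forms on the closed manifold $M$ — a nonzero harmonic form cannot vanish on an open set — so that $\langle V_0 \omega, \omega\rangle_{L^2(M)} \geq c\|\omega\|^2$ with $c>0$. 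Combined with the $y^{-2p}V \approx V_0$ asymptotics and the fact that the model operator acts as $-\partial$-type operator in $y$ plus this positive fiberwise term, the harmonic part also has $\inf\sigma_{\mathrm{ess}} = +\infty$ (either $V_0$ is strictly positive and dominates, giving boundedness below by a constant that can be made arbitrarily large only after also using a one-dimensional Schrödinger argument in $y$ — but in fact even a fixed positive constant on a half-line does not by itself give discreteness, so I'd combine it with the growth coming from the $y$-derivative terms in the $1/n < p$, $p = 1/n$, $0<p<1/n$ regimes as in \eqref{e:Ithmag}).

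Once discreteness is in hand, the Weyl asymptotics \eqref{e:Ithmag} follow from a perturbation/comparison argument: $V$ is a ``lower-order'' perturbation in the sense that, after rescaling, it is bounded relative to the leading symbol of $\Delta_{k,p}$ on the relevant scale (the counting function is driven by the $y\to\infty$ behavior of the metric, where $y^{-2p}V = V_0 + O(y^{-1})$ contributes a bounded term that shifts eigenvalues by $O(1)$, negligible against $\lambda^{n/2}$, $\lambda^{n/2}\log\lambda$ or $\lambda^{1/2p}$). Concretely, I would use a Dirichlet–Neumann bracketing on $\{y \leq R\} \cup \{y \geq R\}$ with $R = R(\lambda)\to\infty$ slowly, apply the closed-manifold Weyl law on the compact piece and an explicit separation-of-variables / Tauberian estimate on the cusp piece exactly as in the proof of Theorem~\ref{t:Ith}, and observe that $+V$ only perturbs the cusp-piece symbol by $V_0(m) + o(1)$, which does not change the leading term of the integrated density of states; in particular $C_1$ (the volume constant for $p>1/n$) and $C_2$ (the boundary-volume constant for $p = 1/n$) are unchanged, which is the assertion of the Proposition. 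The cleanest packaging is to invoke Proposition~\ref{thschr}, of which this is the stated special case, and simply verify its hypotheses: that $y^{-2p}V$ has the given Taylor expansion with $V_0 \geq 0$ nowhere identically zero is exactly what is assumed, so the Proposition is the specialization announced in the text.
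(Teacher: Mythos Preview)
Your final sentence — that this is the specialization of Proposition~\ref{thschr} and one need only verify its hypotheses — is exactly how the paper proceeds; the only thing to check beyond what is assumed is that $\Delta_k$ has the unique continuation property at infinity, which holds because its normal operator is a second-order elliptic differential operator on $M$. The rest of your proposal, however, sketches a different and more elementary route via Persson's formula and a harmonic/non-harmonic fiber decomposition, and that route has a real gap.

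The gap is in the decomposition step. You split a form on the cusp into its fiberwise-harmonic part and its orthogonal complement and then treat the two pieces separately, using the spectral gap of $\Delta^M$ on the complement and the strict positivity of $P_0 V_0 P_0$ on the harmonic piece. But multiplication by $V_0$ does \emph{not} preserve this splitting: the off-diagonal blocks $P_0 V_0 P_0^\perp$ and $P_0^\perp V_0 P_0$ are of the same order $y^{2p}$ as the diagonal ones, so the two pieces cannot be estimated independently as you describe. What actually works — close to what you wrote but not the same — is to observe that the \emph{combined} fiber operator has trivial kernel: if $\cN(x^{2p}\Delta_k)(\xi)\phi + V_0\phi = 0$ then, by non-negativity of both summands, $\cN(x^{2p}\Delta_k)(\xi)\phi = 0$ and $V_0\phi = 0$ separately, and then unique continuation for the elliptic operator $\cN(x^{2p}\Delta_k)(\xi)$ (not just for harmonic forms) forces $\phi=0$.

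This is precisely the paper's argument, packaged in the cusp calculus: one shows $x^{2p}(\Delta_k + V)$ is \emph{fully elliptic}, and full ellipticity then delivers essential self-adjointness (crucially even in the incomplete case $p>1$, which your ``cutoff/commutator estimates'' remark does not address — Theorem~\ref{t:Ith} gets self-adjointness from full ellipticity, not from such estimates), discreteness of the spectrum, \emph{and} the Weyl law with the correct $V$-independent constants $C_1,C_2$, all at once via the results of~\cite{wlom}. Your Dirichlet--Neumann bracketing outline for the asymptotics would require substantial further work and is not how the paper obtains~\eqref{e:Ithmag}.
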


As in Theorem \ref{t:Ith}, the completeness of the manifold is not
required to obtain the essential self-adjointness of the operator.
Note that by assuming $p>1/2$ 
(in particular, on finite-volume hyperbolic manifolds, for which $p=1$) 
and $V_1<0$, we get
$V\sim y^{2p-1} V_1$ tends to $-\infty$ as we approach $M\setminus
\supp(V_0)$. The support of the non-negative leading term $V_0$ must be 
nonempty, but otherwise it can be chosen arbitrarily small.

Some of the results about the essential spectrum appeared in the
unpublished paper \cite{GMo}.

\subsection*{Notations.}Given two Hilbert spaces $\Hr$ and $\Kr$, we
denote by $\Bc(\Hr, \Kr)$ the space of bounded
operators from $\Hr$ to $\Kr$, and by
$\Kc(\Hr, \Kr)$ the subspace of compact operators. When $\Kr=\Hr$, we
simply write $\Bc(\Hr)$ and $\Kc(\Hr)$.
Given a closed operator $H$ acting in $\Hr$, we denote by
$\Dc(H)$ its domain and we endow it with the graph norm $\|\cdot\| +
\|H\cdot\|$. We denote by $\sigma(H)$ its spectrum and by $\rho(H)$
its resolvent set. Given a vector bundle $E$ over a smooth manifold
$X$, we denote by $\cC_c^\infty(X, E)$ the space of smooth sections in
$E$ with  compact support.

\subsection*{Acknowledgements.}
We acknowledge useful discussions with Vladimir Geor\-ges\-cu
and  Andreas Knauf.
 
\section{Geometric definitions}\label{lcm}

Let $\oX$ be a smooth $n$-dimensional compact manifold
with closed boundary $M$, and $x:\oX\to[0,\infty)$ a
boundary-defining function. Let $\cI\subset\cun(\oX)$ be the principal 
ideal generated by the function
$x$. A \emph{cusp vector field} is a smooth vector field $V$ on
$\oX$ such that $dx(V)\in\cI^2$. The space $\ccV$ of cusp vector fields forms a
Lie subalgebra $\ccV$ of the Lie algebra of smooth vector fields on $\oX$.
Moreover, there exists a smooth vector bundle $\ctX\to \oX$ whose space of 
section identifies naturally with $\ccV$.

A \emph{cusp metric} on $\oX$ is a (smooth) Euclidean metric $g_0$ on the 
bundle $\ctX$ over $\oX$. Since $\ctX$ and $TX$ are canonically identified over
the interior $X:=\oX\setminus M$, $g_0$ induces a
complete Riemannian metric  on $X$. 
We want to study the Laplacian on $k$-forms associated to the
metric $g_p:=x^{2p}g_0$ for fixed $p>0$.

Fix a product decomposition of $\oX$ near $M$, i.e., an embedding 
$[0,\varepsilon)\times M\hookrightarrow \oX$ compatible with the function $x$.
Then near the boundary, $g_p$ and the cusp metric $g_0$ take the form 
\begin{align}\label{cume2}
g_0=a \left(\frac{dx}{x^2}+\alpha(x)\right)^2+h(x), &&g_p:=x^{2p}g_0,&& p>0
\end{align}
where $a\in\cun(\oX)$ such that $a_{|M}>0$,  $h$ is a smooth
family of symmetric $2$-tensors on $M$ and $\alpha$ is a
smooth family of $1$-forms in $\cun([0,\varepsilon)\times
M,\Lambda^1(M))$.  Note that the
metric \eqref{mc} is a particular case of the metric $g_p$ from
\eqref{cume2} with $\alpha\equiv 0, a\equiv 1$
and $h(x)$ constant in $x$ (define $x:=1/y$). 

\begin{definition}\label{dccm}
A Riemannian manifold $(X,g_p)$ which is the interior of a compact 
manifold with boundary together with a Riemannian metric $g_p$ as in 
\eqref{cume2} for some $p>0$ is called a \emph{conformally cusp manifold}.
The boundary $M=\partial X$ may be disconnected.
\end{definition}

By \cite[Lemma 6]{wlom}, the function $a_0:=a(0)$, 
the metric $h_0:=h(0)$ and the class modulo exact forms 
of the $1$-form $\alpha_0:=\alpha(0)$, defined on $M$,
are independent of the chosen product decomposition and of 
the boundary-defining function $x$ inside the fixed cusp structure.
We also recall the following definition.

\begin{definition}\label{defex}
The metric $g_0$ is called \emph{exact} if $a_0=1$ and $\alpha_0$ is an
exact $1$-form.
\end{definition}

Let $E,F\to\oX$ be smooth vector bundles. The space of cusp differential
operators $\Diffc(\oX,E,F)$ is the space of those differential operators
which in local trivializations can be written as composition of cusp
vector fields and smooth bundle morphisms. The \emph{normal operator}
of $P\in\Diffc(\oX,E,F)$ is defined by  
\[\rz\ni\xi\mapsto\cN(P)(\xi):= 
\left(e^{i\xi/x}Pe^{-i\xi/x}\right)_{|x=0}\in\Diff(M,E_{|M},F_{|M}).\] 
From the definition, $\ker \cN=\cI\cdot\Diffc$, which we denote again
by $\cI$.
\begin{example}\label{ex1}Given a family $(P_x)_{x\in [0,\epsilon)}$
of operators on $M$ depending smoothly on $x$, one has
$\cN(P_x)(\xi)=P_0$. Also, $\cN(x^2\px)(\xi)=i\xi$.
\end{example}
From the definition, the normal operator map is linear and multiplicative.
Let $P\in\Diffc(\oX,E,F)$ be a cusp operator and $P^*$ its adjoint with
respect to $g_0$. Then $\cN(P^*)(\xi)$ is the adjoint of $\cN(P)(\xi)$
for the volume form ${a_0}^{1/2} dh_0$
and with respect to the metric on $E_{|M},F_{|M}$ induced by restriction. 
Indeed, since $\cN$ commutes 
with products and sums, it is enough to check the claim for the set of 
local generators of $\Diffc$ from example \ref{ex1}, which is a 
straightforward computation.

\section{Proof of Theorem \ref{t:Ith}}

We follow the ideas of \cite{wlom} and \cite{GMo1}. We will first show that
$\Delta_k$ is $x^{-2p}$ times an elliptic cusp differential operator. Since 
we work on bundles, we first trivialize the bundles of forms in the $x$ 
direction. Near $M$, set 
\begin{align}\label{vi} 
V_0:=x^2\px\in\ccV,&& V^0:=x^{-2}dx+\alpha.
\end{align}
We get an orthogonal decomposition of smooth vector bundles 
\begin{equation}\label{decom} 
\Lambda^k(\ctX)\simeq\Lambda^k(TM)\oplus V^0\wedge\Lambda^{k-1}(TM),
\end{equation} 
where $\Lambda^*(TM)$ is identified with the kernel of the contraction
by $V_0$. 
 
The de Rham differential
$d:\cun(X,\Lambda^k X)\to \cun(X,\Lambda^{k+1}X)$
restricts to a cusp differential operator
$d:\cun(\oX,\Lambda^k(\ctX))\to \cun(\oX,\Lambda^{k+1}(\ctX))$. Its
normal operator in the decomposition \eqref{decom} is 
\begin{equation}\label{e:Nd}
\cN(d)(\xi)= \begin{bmatrix} 
d^M-i\xi\alpha_0\wedge&d^M\alpha_0\wedge\\ 
i\xi&-(d^M-i\xi\alpha_0\wedge) 
\end{bmatrix}.
\end{equation} 
 
The principal symbol of a cusp operator in
$\Diff_c^*(\oX,E,F)$  
extends as a map on the cusp cotangent bundle. An operator $P\in
x^{-2p}\Diff_c^*(\oX,E,F)$ is called \emph{cusp-elliptic}  in the
sense of Melrose if the principal symbol of $x^{2p} P$ is invertible
on $\ctsX\setminus\{0\}$ down to 
$x=0$; it is called \emph{fully elliptic}  if it is cusp-elliptic and
if the differential operator 
$\cN(x^{2p} P)(\xi)$ on $M$ is invertible as an unbounded
operator in $L^2(M,E_{|M},F_{|M})$ for all $\xi\in\rz$, see \cite{meleta}. 
 
\begin{proposition}\label{th4} 
The Laplacian $\Delta_{k}$ of the metric $g_p$ belongs to
$x^{-2p}\Diffc^2(\oX,\Lambda^k(\ctX))$ and is cusp-elliptic. 
Moreover, $x^{2p}\Delta_{k}$
is fully elliptic if and only if the de Rham coho\-mo\-lo\-gy groups
$H^k_{\mathrm{dR}}(M)$ and $H^{k-1}_{\mathrm{dR}}(M)$ both vanish.
\end{proposition}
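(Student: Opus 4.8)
The plan is to verify the three assertions in turn, using the normal-operator formalism set up in the excerpt. First I would show that $\Delta_k$ lies in $x^{-2p}\Diffc^2(\oX,\Lambda^k(\ctX))$. The de Rham differential $d$ and the codifferential $\delta$ of the metric $g_p$ are both cusp differential operators once we trivialize the bundle $\Lambda^k(\ctX)$ near $M$ via the decomposition \eqref{decom}; indeed $d$ is a cusp operator independently of the metric (it is built from the cusp vector fields $V_0$ and the exterior structure), and $\delta=x^{-2p}(x^{2p}\delta)$ is $x^{-2p}$ times a cusp operator because the metric $g_p=x^{2p}g_0$ is a cusp metric up to the conformal factor $x^{2p}$, and conjugating $\delta$ by the conformal weight introduces only smooth powers of $x$. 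Composing, $\Delta_k=d\delta+\delta d\in x^{-2p}\Diffc^2$. Cusp-ellipticity is immediate: the principal symbol of $x^{2p}\Delta_k$ agrees with that of the ordinary Laplacian of the complete metric $g_0$ (conformal changes do not affect the leading symbol of the Laplacian up to a positive scalar), which is $|\xi|^2_{g_0}\cdot\mathrm{Id}$ on $\ctsX$, invertible away from the zero section down to $x=0$.

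Next I would compute the normal operator $\cN(x^{2p}\Delta_k)(\xi)$. Because $\cN$ is linear and multiplicative, and because $\cN((x^{2p}\delta))(\xi)$ is the adjoint of $\cN(d)(\xi)$ with respect to the volume form $a_0^{1/2}\,dh_0$ on $M$ (this is exactly the adjointness statement proved at the end of Section \ref{lcm}, applied after stripping the conformal weight), we get
\begin{equation*}
\cN(x^{2p}\Delta_k)(\xi)=\cN(d)(\xi)^*\cN(d)(\xi)+\cN(d)(\xi)\cN(d)(\xi)^*,
\end{equation*}
with $\cN(d)(\xi)$ the explicit $2\times 2$ matrix of operators on $M$ given in \eqref{e:Nd}. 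I expect the main technical obstacle to be the next step: deciding for which $\xi$ and which topological data this operator fails to be invertible. The operator $\cN(x^{2p}\Delta_k)(\xi)$ is a non-negative self-adjoint operator on $L^2(M,\Lambda^k\oplus\Lambda^{k-1})$, so full ellipticity is equivalent to $0\notin\Spec\cN(x^{2p}\Delta_k)(\xi)$ for every real $\xi$, i.e.\ to $\ker\cN(d)(\xi)\cap\ker\cN(d)(\xi)^*=0$ for all $\xi$. For $\xi\neq 0$ one checks directly from \eqref{e:Nd} that the operator is invertible — the off-diagonal $\pm i\xi$ terms make $\cN(d)(\xi)$ behave like a twisted Rumin-type complex with no harmonic elements; the cleanest way is to twist by $e^{i\xi\psi}$ where $d^M\psi$ represents $\alpha_0$ is impossible globally, so instead one argues that $\cN(d)(\xi)$ for $\xi\neq0$ computes the cohomology of $M$ with coefficients in a flat line bundle of nonzero holonomy class $[\xi\alpha_0]$, twisted appropriately through the mapping-cone structure of the matrix, and this cohomology vanishes. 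The decisive point is $\xi=0$: there $\cN(d)(0)=\begin{bmatrix} d^M & d^M\alpha_0\wedge\\ 0 & -d^M\end{bmatrix}$ acting on $\Lambda^k(M)\oplus\Lambda^{k-1}(M)$, which is the differential of the mapping cone of the chain map $\alpha_0\wedge:(\Lambda^{*-1}(M),d^M)\to(\Lambda^*(M),d^M)$; its cohomology in the relevant degree is, up to the long exact sequence of the cone, controlled by $H^k_{\dR}(M)$ and $H^{k-1}_{\dR}(M)$, and since $\alpha_0$ is closed the wedge map is null-homotopic in cohomology, so the cone cohomology is just $H^k_{\dR}(M)\oplus H^{k-1}_{\dR}(M)$.

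Finally, assembling: $x^{2p}\Delta_k$ is fully elliptic iff $\cN(x^{2p}\Delta_k)(\xi)$ is invertible for all $\xi\in\rz$; by the above this holds automatically for $\xi\neq0$, and at $\xi=0$ it holds iff $\ker\cN(x^{2p}\Delta_k)(0)=0$, which by Hodge theory on the compact manifold $M$ (the mapping-cone complex being elliptic, its harmonic space computes its cohomology) is iff $H^k_{\dR}(M)=H^{k-1}_{\dR}(M)=0$. I would present the $\xi=0$ kernel computation carefully — identifying $\ker\cN(d)(0)\cap\ker\cN(d)(0)^*$ with the space of pairs $(\omega,\eta)$ of harmonic forms on $M$ of degrees $k,k-1$ subject to the vanishing of $\alpha_0\wedge$-type terms, then observing that closedness of $\alpha_0$ forces these extra terms to vanish on harmonic representatives — since this is where the two Betti numbers genuinely enter and where a careless computation could miss the $k-1$ contribution coming from the $V^0\wedge\Lambda^{k-1}(TM)$ summand in \eqref{decom}.
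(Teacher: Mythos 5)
Your high-level strategy matches the paper's: identify $\cN(x^{2p}\Delta_k)(\xi)$ with the Hodge Laplacian of the complex $\cN(d)(\xi)$ on $M$, use Hodge theory to reduce full ellipticity to cohomology vanishing, split into $\xi\neq 0$ and $\xi=0$, and recognize that at $\xi=0$ the cohomology is $H^k(M)\oplus H^{k-1}(M)$. But both technical halves have genuine gaps.

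For $\xi\neq 0$ you assert that $\cN(d)(\xi)$ ``computes the cohomology of $M$ with coefficients in a flat line bundle of nonzero holonomy class $[\xi\alpha_0]$\ldots and this cohomology vanishes.'' This is not a proof, and the assertion as stated is false: a flat line bundle with nonzero holonomy class can still have nonvanishing cohomology (and in any case the holonomy can become trivial for special values of $\xi$ when $\alpha_0$ has rational periods). The vanishing does not come from the twist by $\alpha_0$ at all; it comes from the order-zero off-diagonal entry $i\xi$. The paper isolates exactly this: write $\cN(d)(\xi)=A(\xi)+B(\xi)$ with $A(\xi)=\begin{bmatrix}0&0\\ i\xi&0\end{bmatrix}$ and $B(\xi)$ the upper-triangular remainder, check that $B(\xi)^*$ anticommutes with $A(\xi)$ and $A(\xi)^*$ anticommutes with $B(\xi)$ (adjoints taken for the $M$-volume form $a_0^{1/2}\,dh_0$), so the cross terms cancel and
\begin{equation*}
\cN(d)(\xi)\cN(d)(\xi)^*+\cN(d)(\xi)^*\cN(d)(\xi)=\xi^2 I+B(\xi)^*B(\xi)+B(\xi)B(\xi)^*\geq \xi^2 I,
\end{equation*}
which is strictly positive for $\xi\neq 0$. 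This anticommutation computation is the core of the proof, and your proposal does not supply a substitute for it.

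At $\xi=0$ you lean on ``closedness of $\alpha_0$'', but $\alpha_0=\alpha(0)$ is an arbitrary smooth $1$-form on $M$ in Definition~\ref{dccm} — nothing forces $d^M\alpha_0=0$, and if it \emph{were} zero the $(1,2)$ entry of $\cN(d)(0)$ would vanish and the whole $\xi=0$ discussion would be trivial. Your mapping-cone picture is also degree-inconsistent: the $(1,2)$ entry is $(d^M\alpha_0)\wedge$, a degree-$2$ map, not $\alpha_0\wedge$, and $\alpha_0\wedge$ is not a chain map unless $\alpha_0$ is closed. The correct mechanism, valid for every $\alpha_0$, is the Leibniz identity $(d^M\alpha_0)\wedge=d^M\circ(\alpha_0\wedge)+(\alpha_0\wedge)\circ d^M$, which says precisely that $\alpha_0\wedge$ is a null-homotopy of $(d^M\alpha_0)\wedge$; the paper encodes this as the conjugation
\begin{equation*}
\cN(d)(0)=\begin{bmatrix}1&-\alpha_0\wedge\\0&1\end{bmatrix}\begin{bmatrix}d^M&0\\0&-d^M\end{bmatrix}\begin{bmatrix}1&\alpha_0\wedge\\0&1\end{bmatrix},
\end{equation*}
which identifies the cohomology with $H^*(M)\oplus H^{*-1}(M)$ without any closedness hypothesis. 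Finally, for the membership $\Delta_k\in x^{-2p}\Diffc^2$ you should cite the explicit conjugation $\delta_p^k=x^{(2k-n)p+2}\delta_0\,x^{(n-2(k+1))p-2}$ rather than asserting it ``introduces only smooth powers of $x$''; this formula is also what reduces $\cN(x^{2p}\Delta_k)$ to the $g_0$-Laplacian's normal operator via conjugation invariance of $\cN$.
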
 
\begin{proof} 
The principal symbol of the Laplacian of $g_p$ on $\Lambda^k X$ is
$g_p$ times the identity. Since  $x^{-2p}g_p=g_0$ on the cotangent
bundle, and since $g_0$ extends by definition to a positive-definite
bilinear form on $\ctsX$, it follows that $x^{2p}\Delta_{k}$ is cusp-elliptic. 
Let $\delta_0^k, \delta_p^k$ be the formal adjoint of $d:\Lambda^k X 
\to\Lambda^{k+1}(X)$ with respect to $g_0$,
resp.\ $g_p$. Then $\delta_p^k=x^{(2k-n)p+2}\delta_0 x^{(n-2(k+1))p-2}$. 
By conjugation invariance of the normal operator, we obtain
$\cN\big(x^{2p}(d\delta_p+\delta_p
d)\big)=\cN(d\delta_0+\delta_0d)$. By Hodge theory, 
the kernel of $\cN(d\delta_0+\delta_0d)(\xi)$  is
isomorphic to the cohomology of the complex $\big(\Lambda^*(\ctX)_{|M},
\cN(d)(\xi)\big)$. We write 
\begin{align*} 
\cN(d)(\xi)=A(\xi)+B(\xi), \mbox{ where }
A(\xi)=\begin{bmatrix}0&0\\i\xi&0\end{bmatrix},
B(\xi)=\begin{bmatrix} 
d^M-i\xi\alpha_0\wedge&d^M\alpha_0\wedge\\ 
0&-(d^M-i\xi\alpha_0\wedge) 
\end{bmatrix}.
\end{align*} 
We claim that for $\xi\neq 0$ the cohomology of $\cN(d)(\xi)$ vanishes.
The idea is to use again Hodge theory but with respect to the volume
form $dh_0$ on $M$. Then $B(\xi)^*$ anti-commutes with $A(\xi)$ and similarly
$A(\xi)^*B(\xi)+B(\xi)A(\xi)^*=0$. Therefore
\[\cN(d)(\xi)\cN(d)(\xi)^*+ 
\cN(d)(\xi)^*\cN(d)(\xi)=\xi^2 I+B(\xi)^*B(\xi)+B(\xi)B(\xi)^*\] 
where $I$ is the $2\times 2$ identity matrix. So for $\xi\neq 0$
the Laplacian of $\cN(d)(\xi)$ is a strictly positive elliptic operator, hence 
it is invertible. 
 
Let us turn to the case $\xi=0$. We claim that the cohomology 
of $(\Lambda^*M\oplus\Lambda^{*-1}M, \cN(d)(0))$ is isomorphic to 
$H^*_{\mathrm{dR}}(M)\oplus H^{*-1}_{\mathrm{dR}}(M)$. Indeed, 
notice that 
\[\cN(d)(0)=\begin{bmatrix}1&-\alpha_0\wedge\\0&1\end{bmatrix} 
\begin{bmatrix}d^M&0\\0&-d^M\end{bmatrix} 
\begin{bmatrix}1&\alpha_0\wedge\\0&1\end{bmatrix}.\] 
In other words, the differential $\cN(d)(0)$ is conjugated to 
the diagonal de Rham differential, so they have isomorphic cohomology. 
\end{proof} 

\begin{corollary}\label{cor3.2}
If $b_k(M)=b_{k-1}(M)=0$ then for every $p>0$, the  Laplacian $\Delta_{k}$ 
of the metric $g_p$ is essentially self-adjoint on
$\cC^\infty_c(X, \Lambda^k X)$ and has purely discrete spectrum. 
The domain of its self-adjoint extension is the weighted cusp Sobolev
space $x^{2p}H^2_c(X,\Lambda^kX)$. 
\end{corollary}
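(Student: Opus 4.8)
The plan is to deduce Corollary \ref{cor3.2} directly from Proposition \ref{th4} together with the standard Fredholm theory of fully elliptic cusp operators developed in \cite{meleta}. By Proposition \ref{th4}, under the hypothesis $b_k(M)=b_{k-1}(M)=0$ the operator $P:=x^{2p}\Delta_k\in\Diffc^2(\oX,\Lambda^k(\ctX))$ is fully elliptic. The key structural input I would invoke is that a fully elliptic cusp operator of order $m$ acting between sections of a bundle $E$ is Fredholm as a map $x^{a}H^{s}_c(X,E)\to x^{a}H^{s-m}_c(X,E)$ for every weight $a$ and order $s$, with a parametrix in the cusp calculus whose remainders are compact on each such weighted cusp Sobolev space; the kernel and cokernel are finite-dimensional and consist of smooth sections vanishing rapidly at $M$. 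This is exactly the point where full ellipticity (invertibility of the normal operator for all $\xi\in\rz$), as opposed to mere cusp-ellipticity, is used.

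First I would record that $\Delta_k$ as an operator on $\cunc(X,\Lambda^kX)$ is symmetric and non-negative in $L^2(X,\Lambda^kX,g_p)$, so it admits a Friedrichs extension. To prove essential self-adjointness, I would show that the two deficiency spaces $\ker(\Delta_k^*\mp i)$ are trivial; equivalently, since $\Delta_k^*$ is the maximal (distributional) extension, that any $u\in L^2$ with $(\Delta_k\mp i)u=0$ vanishes. Multiplying by $x^{2p}$, such a $u$ lies in the maximal domain of the fully elliptic cusp operator $P=x^{2p}\Delta_k$ relative to the weight determined by $L^2(X,\Lambda^kX,g_p)=x^{np/2}\cdot x^{-np/2}L^2(\ldots)$; elliptic regularity in the cusp calculus then forces $u$ to lie in the corresponding weighted cusp Sobolev space $x^{2p}H^2_c$ and in particular to decay at $M$. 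On that domain $\Delta_k$ is genuinely self-adjoint (the Friedrichs and maximal extensions coincide because $P$ is Fredholm with domain independent of the choice among the natural closed extensions), so $(\Delta_k\mp i)$ is injective, giving essential self-adjointness; simultaneously this identifies $\Dom(\overline{\Delta_k})=x^{2p}H^2_c(X,\Lambda^kX)$, proving the last sentence of the corollary.

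Next, for the discreteness of the spectrum, I would use that the inclusion $x^{2p}H^2_c(X,\Lambda^kX)\hookrightarrow L^2(X,\Lambda^kX,g_p)$ is compact: this is the cusp-calculus analogue of Rellich's lemma and again relies on the weight gain $x^{2p}$ with $p>0$ together with the order gain from $H^2$ to $H^0$ — precisely the statement that the parametrix remainders are compact. Since the domain of the self-adjoint extension embeds compactly in the ambient Hilbert space, $\Delta_k$ has compact resolvent, hence purely discrete spectrum. Alternatively, and perhaps more cleanly, I would quote that a fully elliptic cusp operator which is additionally formally self-adjoint and non-negative is invertible modulo a finite-rank operator on each weighted space, so $(\Delta_k+1)^{-1}$ is compact.

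The main obstacle I anticipate is bookkeeping the weights and matching the Melrose cusp Sobolev spaces to the geometric $L^2$ space of $g_p$: one must pass between $L^2(X,\Lambda^kX,g_p)$ and the "calculus-normalized" $L^2$ (defined via the cusp density $a_0^{1/2}\,\tfrac{dx}{x^2}\,dh_0$ or $dh_0$, depending on convention), and verify that conjugating $\Delta_k$ by the appropriate power of $x$ — as was already done in Proposition \ref{th4} via the identity $\delta_p^k=x^{(2k-n)p+2}\delta_0 x^{(n-2(k+1))p-2}$ — lands one in a weight to which the Fredholm and Rellich statements of \cite{meleta} literally apply. Once that normalization is pinned down, the essential self-adjointness, the identification of the domain, and the discreteness all follow formally from the cusp-calculus machinery; I would not expect any genuinely new analytic difficulty beyond Proposition \ref{th4}.
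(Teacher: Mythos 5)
Your proposal is correct and is essentially the same argument as the paper's: both deduce from Proposition~\ref{th4} that $x^{2p}\Delta_k$ is fully elliptic, then invoke the cusp calculus (existence of a parametrix $G\in x^{2p}\Psi_c^{-2}$ inverting $\Delta_k$ modulo $x^\infty\Psi_c^{-\infty}$) to identify the minimal and maximal domains with $x^{2p}H^2_c(X,\Lambda^kX)$, and finally use compactness of $x^{p'}\Psi_c^{-q}$ (for $p',q>0$) to conclude the resolvent is compact. One small remark on the presentation: the detour through the deficiency spaces $\ker(\Delta_k^*\mp i)$ is redundant and, as written, slightly circular — you first set out to show the deficiency indices vanish, but the argument you actually give ("the Friedrichs and maximal extensions coincide because $P$ is Fredholm with domain independent of the choice of closed extension") is precisely the statement $\Dom_{\min}=\Dom_{\max}=x^{2p}H^2_c$, which already \emph{is} essential self-adjointness; once you have that, there is nothing left to prove about $\ker(\Delta_k^*\mp i)$. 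The paper proves both inclusions directly (the easy one $x^{2p}H^2_c\subset\Dom_{\min}$, and the harder one $\Dom_{\max}\subset x^{2p}H^2_c$ via the Green operator and the mapping properties of cusp operators), which is cleaner; your version asserts the key domain identification in a parenthetical rather than deriving it, so if you were writing this out you would want to expand that parenthetical into the two-inclusion argument rather than lean on the deficiency-space framing.
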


Note that for $p\leq 1$ the metric $g_p$ is complete, in which case
$\Delta_k$ is essentially self-adjoint without any extra hypothesis, 
see \cite{Ga}; however, even in the complete case, one cannot 
describe the domain of the unique self-adjoint extension
if the Betti numbers do not vanish. 
The definition of the cusp Sobolev spaces is recalled in the proof below.
\begin{proof}
By the de Rham theorem, the vanishing of the Betti numbers is equivalent to
the vanishing of the de Rham cohomology groups $H^k_{\mathrm{dR}}(M)$ and 
$H^{k-1}_{\mathrm{dR}}(M)$. Hence from Proposition \ref{th4} it follows that
$x^{2p} \Delta_{k}$ is fully elliptic. From the  general properties of
the cusp calculus \cite{meni96c}, there exists a Green
operator $G\in x^{2p}\Psi_c^{-2}(X,\Lambda^k X)$  which inverts
$\Delta_k$ up to remainders in the ideal
$x^\infty\Psi_c^{-\infty}(X,\Lambda^k X)$. Although we do not use
  it here, recall that $\Psi_c^k(X,E,F)$ is defined as a space of  
distributional kernels on $X\times X$ 
(with coefficients in the bundle $E\boxtimes F^*$) which are classically 
conormal to the diagonal and with prescribed asymptotics near the boundary.
More precisely, they must lift to be extendible across the 
front face of a double blow-up resolution
of the corner $M\times M$ in $\oX\times \oX$, and to vanish in Taylor series
at the other boundary hyperfaces. We refer e.g.\ to \cite{wlom}
for the precise definition. Once the properties of the calculus
have been established, the existence of the Green operator $G$ is 
a standard application of elliptic theory.

The Sobolev space $H^q_c(X,\Lambda^k X)$ is by definition the
intersection of  the domains of the maximal extensions of all elliptic
cusp operators  inside $\Psi_c^q(X,\Lambda^k X)$. Cups operators
of order $r\in\cz$ map  $H^q_c(X)$ to $H^{q-\Re(r)}_c(X)$, see
\cite{wlom}. 

Look now at $\Delta_k$ restricted to $\cunc(X,\Lambda^k X)$. It
is easy to see that  $x^{2p}H^2_c(X)$ is contained in the domain of
the  minimal extension of $\Delta_k$. Conversely, using the Green
operator $G$  and the mapping properties of cusp operators stated
above,  we see that every vector in the domain of the maximal extension of
$\Delta_k$ belongs to $x^{2p}H^2_c(X)$. In conclusion, the minimal and the 
maximal domain are the same and equal $x^{2p}H^2_c(X)$.

Recall now from \cite{wlom} that for $p,q>0$, operators in 
$x^{p}\Psi_c^{-q}(X)$ are compact. Since the
self-adjoint operator $\Delta_k$ has a compact  
inverse modulo compact operators, it follows that it has purely discrete 
spectrum.
\end{proof}

Notice that we have more generally proved that a symmetric 
fully elliptic cusp operator of order $(-p,-r)$ with $p,r>0$ 
on a conformally cusp manifold is essentially self-adjoint and has purely
discrete spectrum.
 
We can now conclude the proof of Theorem \ref{t:Ith}.
By Proposition \ref{th4}, if we assume that $b_k(M)=b_{k-1}(M)=0$ 
it follows that $\Delta_{k}$ is fully elliptic. From Corollary \ref{cor3.2},
this implies (by a form of elliptic regularity) that $\Delta_k$ is 
essentially self-adjoint and the domain of the extension is the weighted
Sobolev space $x^{2p}H^2(X)$. By \cite[Theorem 17]{wlom}, the
spectrum of $\Delta_{k}$ is purely discrete and accumulates towards
infinity according to \eqref{e:Ithmag}, modulo identification of the correct
coefficients. This is proved in two steps as in \cite{wlom}: first, 
the complex powers of a self-adjoint fully elliptic cusp operator belong 
again to the cusp calculus and form an analytic family; secondly, the trace 
of an analytic family in the complex variable $z$ 
of cusp operators of order $(-z,-pz)$ is well-defined for $z<-n, pz<-1$,
and extends to a meromorphic function on $\cz$ with at most double poles at 
certain reals. By \cite[Proposition 14]{wlom} and Delange's theorem 
(\cite[Lemma 16]{wlom}), the coefficients $C_1,C_2,C_3$ are determined by 
the order of the first occurring pole of the zeta function (i.e., the 
smallest $z\in\rz$ which is a pole for
$\zeta(z):=\Tr(\Delta_k^{-z/2})$ and by its 
leading coefficient in Laurent expansion. The principal
symbol $\sigma_1(\Delta_{k}^{1/2})$ is identically $1$ on the cosphere
bundle. The dimension of the form bundle equals the binomial coefficient 
$\binom{n}{k}$. 
\subsection{The case $p>1/n$.} From \cite[Proposition 14]{wlom}, the 
first pole of $\zeta$ is simple, located at $z=n$ with residue 
\[R_1=(2\pi)^{-n} \dbinom{n}{k} \vol(X)\vol(S^{n-1}).\]
From \cite[Lemma 16]{wlom}, we get the asymptotic equivalence for the 
eigenvalues of $\Delta_{k}^{1/2}$: 
\[N(\Delta_{k}^{1/2},\lambda)\approx R_1/n \lambda^n.\]
Taking into account $N(\Delta_{k}^{1/2},\lambda^{1/2})=N(\Delta_{k},\lambda)$, 
we get 
\begin{equation}\label{e:Ic1}
C_1={\dbinom{n}{k}}\frac{\Vol(X,g_p)\Vol(S^{n-1})}{n(2\pi)^n}.
\end{equation} 
\subsection{The case $p=1/n$.}
From \cite[Proposition 14]{wlom}, the 
first pole of $\zeta$ is double, located at $z=n$ with leading coefficient
\[R_2=n(2\pi)^{-n} \dbinom{n}{k} \vol(M)\vol(S^{n-1}).\]
From \cite[Lemma 16]{wlom}, the eigenvalues of $\Delta_{k}^{1/2}$ obey
\[N(\Delta_{k}^{1/2},\lambda)\approx R_2/n \lambda^n \log \lambda.\]
Again translating from the counting function of $\Delta_{k}^{1/2}$
to that of $\Delta_{k}$, we get
\begin{equation}\label{e:Ic2}
C_2={\dbinom{n}{k}} \frac{\Vol(M,h_0)\Vol(S^{n-1})}{2(2\pi)^n}.
\end{equation} 

\subsection{The case $p<1/n$.} In this situation the 
first pole of $\zeta$ is simple, located at $z=1/p$ with residue
\[-\frac{n}{2\pi}\int_\rz 
\Tr\  \cN\left(x^{-1}\Delta_k^{-\frac{1}{2p}}\right)(\xi)d\xi.\]
To compute $C_3$, we suppose also that the metric $g_0$ is exact. 
With this assumption, by replacing $x$ with another boundary-defining
function inside the same cusp structure, we can assume that
$\alpha_0=0$ (see \cite{wlom}). In this case, \eqref{e:Nd} gives
\begin{equation}\label{cndkp}
\cN(x^{2p}\Delta_{k})(\xi)=\begin{bmatrix}\xi^2+\Delta_k^M&0\\0& 
\xi^2+\Delta_{k-1}^M 
\end{bmatrix}.
\end{equation}
This allows us to compute the integral from \cite[Proposition 14]{wlom} 
in terms of the zeta functions of the Laplacians on forms on $M$ with 
respect to $h_0$. Straightforwardly, one gets
\begin{equation}\label{C_3}
C_3=\frac{\Gamma\left(\tfrac{1-p}{2p}\right) 
\left(\zeta\left(\Delta_k^M, \frac{1}{p}-1\right) 
+\zeta\left(\Delta_{k-1}^M, \frac{1}{p}-1\right)\right)} 
{2\sqrt{\pi}\Gamma\left(\tfrac{1}{2p}\right)}.
\end{equation} 
This end the proof of Theorem \ref{t:Ith}.

\section{Schr\"odinger operators and discrete spectrum}\label{schro}

In this section, we prove the compactness of the resolvent of the 
Schr\"odinger operator for a
class of potentials that tend to $+\infty$ only towards a very small
part of the infinity. Proposition \ref{p:intro} for the metric 
\eqref{cume2} is a particular case of this analysis.

Let $H_0$ be a cusp pseudodifferential operator on $X$. We say that $H_0$
has the \emph{unique continuation property at infinity} if for all
$\xi\in\rz$, the normal operator $\cN(H_0)(\xi)$ has the (weak) unique
continuation property as  an operator on each connected component of
$M$, i.e., the non-zero  solutions $\phi$ to the pseudodifferential
equation $\cN(H_0)(\xi)\phi=0$ do not vanish on any open set.
 
\begin{proposition}\label{thschr}
Let $g_p$ be the metric on $X$ given by \eqref{cume2} near
$\partial X$. Let $H_0$ be a non-negative cusp-elliptic 
operator, $H_0\in x^{-qp}\Psi_c^q(X,E)$ for some $q>0$. Assume that
$x^{qp}H_0$ has the unique continuation property at infinity.
Let $V$ be a self-adjoint potential in $x^{-qp}\cun(\oX,E)$. 
Assume  $V_0:=(x^{qp}V)_{|M}\in\cun(M,E_{|M})$
is semi-positive definite and in each connected 
component of $M$ there is $z$ with $V_0(z)>0$. 
Then $H:=H_0+V$ is essentially self-adjoint in $L^2(X,E)$ and
$\sigma_{\rm ess}(H)=\emptyset$.  Its eigenvalue counting function, as
$\lambda$ goes to infinity, satisfies 
\begin{equation*} 
N_{H}(\lambda) \approx \begin{cases} 
C_1'\lambda^{n/q}& \text{for $1/n< p<\infty$,}\\
C_2'\lambda^{n/q}\log \lambda &\text{for $p=1/n$} \\
C_3'\lambda^{\frac{1}{qp}} &\text{for $p<1/n$}.
\end{cases} 
\end{equation*} 
\end{proposition}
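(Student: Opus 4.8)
The plan is to reduce the statement to the machinery already established for Theorem \ref{t:Ith}, namely to show that the hypotheses on $V_0$ upgrade $H_0$ from merely cusp-elliptic to \emph{fully elliptic} after the shift by $V$, and then to read off self-adjointness, discreteness and the Weyl asymptotics exactly as before. First I would verify that $H = H_0 + V$ lies in $x^{-qp}\Psi_c^q(X,E)$: indeed $H_0$ is assumed to be in that space, and $V \in x^{-qp}\cun(\oX,E)$ acts as a cusp operator of order $(-qp,0)$, so the sum is again a cusp operator of order $q$ with the same weight $x^{-qp}$, and it is cusp-elliptic since $V$ is a zeroth-order (in the $\Diff$ sense) term that does not affect the principal symbol of $x^{qp}H_0$. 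So $x^{qp}H$ is cusp-elliptic, and by the computation of Example \ref{ex1} its normal operator is
\[
\cN(x^{qp}H)(\xi) = \cN(x^{qp}H_0)(\xi) + V_0,
\]
since $\cN$ is additive and $\cN(x^{qp}V)(\xi) = (x^{qp}V)_{|M} = V_0$ is independent of $\xi$.

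The heart of the argument is that $\cN(x^{qp}H)(\xi)$ is invertible on $L^2(M,E_{|M})$ for every $\xi\in\rz$, which is precisely full ellipticity. Since $H_0$ is non-negative and $V_0 \geq 0$, the operator $\cN(x^{qp}H)(\xi)$ is non-negative, so it suffices to rule out $0$ in its kernel. Suppose $\phi \in L^2(M,E_{|M})$ satisfies $\cN(x^{qp}H)(\xi)\phi = 0$; pairing with $\phi$ and using non-negativity of both summands gives both $\langle \cN(x^{qp}H_0)(\xi)\phi,\phi\rangle = 0$ and $\langle V_0\phi,\phi\rangle = 0$. From the second equation and the semi-positivity of $V_0$ we get $V_0^{1/2}\phi = 0$, hence $\phi$ vanishes on the open set where $V_0 > 0$, which by hypothesis is a nonempty open subset of each connected component of $M$. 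From the first equation we get $\cN(x^{qp}H_0)(\xi)\phi = 0$ (a non-negative self-adjoint operator annihilates any vector in the kernel of its quadratic form). Now invoke the unique continuation property at infinity assumed for $x^{qp}H_0$: on each connected component of $M$, a non-zero solution of $\cN(x^{qp}H_0)(\xi)\phi = 0$ cannot vanish on an open set. Since $\phi$ does vanish on a nonempty open set of every component, $\phi \equiv 0$. Thus $\ker \cN(x^{qp}H)(\xi) = 0$ for all $\xi$, so $x^{qp}H$ is fully elliptic.

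With full ellipticity in hand, the rest is an appeal to the results already used in Section \ref{schro}'s predecessor. By the remark following Corollary \ref{cor3.2}, a symmetric fully elliptic cusp operator of order $(-q,-qp)$ with $q,qp>0$ on a conformally cusp manifold is essentially self-adjoint on $\cunc(X,E)$ with domain the weighted cusp Sobolev space $x^{qp}H^q_c(X,E)$, and — having a parametrix in $x^{qp}\Psi_c^{-q}(X,E)$, which is compact since $qp>0$ and $q>0$ — it has purely discrete spectrum accumulating only at $+\infty$. For the eigenvalue asymptotics one runs the two-step zeta-function argument of \cite{wlom} verbatim: the complex powers $H^{-z/q}$ form an analytic family of cusp operators of order $(-z,-zp)$, and $\zeta_H(z) := \Tr(H^{-z/q})$ extends meromorphically with first pole governed by \cite[Proposition 14]{wlom}; the location and order of that pole depend only on the principal symbol of $x^{qp}H$ (which equals that of $x^{qp}H_0$) when $p>1/n$, and on the normal operator when $p<1/n$ — in the boundary case $p=1/n$ both contribute. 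Since $V_0\geq 0$ only \emph{shifts} the normal operator by a bounded term, it changes neither the leading pole location ($z=n/1$, resp.\ $z=1/p$, after rescaling by the order $q$) nor, in the cases $p>1/n$ and $p=1/n$, the residue, so the constants $C_1', C_2'$ depend only on the symbol data and not on $V$; applying Delange's theorem (\cite[Lemma 16]{wlom}) and translating the counting function of $H^{1/q}$ to that of $H$ yields the stated three-regime Weyl law. The main obstacle is the verification that $V$ genuinely sits in the cusp calculus with the right order and weight so that $\cN(x^{qp}V) = V_0$ on the nose and the shifted normal operator inherits the analytic-family structure used in the zeta-function step; once that bookkeeping is clean, everything else is a direct transcription of the machinery behind Theorem \ref{t:Ith}.
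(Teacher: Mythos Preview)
Your proof is correct and follows essentially the same route as the paper: establish full ellipticity of $x^{qp}H$ by combining non-negativity of $\cN(x^{qp}H_0)(\xi)$ and $V_0$ with the unique continuation hypothesis to kill the kernel of the normal operator, then invoke the cusp-calculus machinery (essential self-adjointness, compact parametrix, zeta-function asymptotics from \cite{wlom}) exactly as in the proof of Theorem~\ref{t:Ith}. Your write-up is in fact more explicit than the paper's on several bookkeeping points (why $V$ does not disturb the principal symbol, why $\langle\cN(x^{qp}H_0)(\xi)\phi,\phi\rangle=0$ forces $\cN(x^{qp}H_0)(\xi)\phi=0$), but the argument is the same.
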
 

\begin{proof} 
We start by proving that the operator $H$ is fully
elliptic. Let $\xi\in\R$. A non-zero solution $\phi$ of
$\cN(x^{qp}H)(\xi)\phi=0$ satisfies $\langle\cN(x^{qp}H_0) (\xi)
\phi,\phi\rangle+\langle V_0\phi,\phi\rangle=0$.
The operators $\cN(x^{qp}H_0)(\xi)$ and $V_0$ being non-negative, we
get $\cN(x^{qp}H_0)(\xi)\phi=0$ and $V_0\phi=0$. 
By unique continuation, solutions of the elliptic operator 
$\cN(x^{qp}H_0)(\xi)$ which are not identically zero 
on a given connected component of $M$ do not vanish on any open subset 
of that component. However since $V_0\phi=0$ and $V_0(z)>0$, $\phi$ must vanish
in the neighborhood of $z$ where $V_0$ is invertible. 
Contradiction. $H$ is fully elliptic.

By \cite[Lemma 10 and Corollary 13]{wlom}, it follows that $H$
is essentially self-adjoint with domain $x^{qp}H^{q}_c(M)$,
and has purely discrete spectrum.
By \cite[Proposition 14]{wlom}, the constants $C_1'$ and $C_2'$ can be
computed as in Section \ref{lcm}, they depend only 
on the principal symbol of $H$ and so they are independent of $V$. The
coefficient $C_3'$ depends only on $\cN(H)$.\end{proof}  

The unique continuation property holds for instance when $\cN(H_0)(\xi)$ is
an elliptic second-order differential operator for all $\xi$, in
particular for the Laplacians $\Delta_{k}$ on differential forms or the
(scalar) magnetic Laplacian, like in \cite{GMo1}. Thus Proposition \ref{thschr} 
applies to $\Delta_{k}+V$ for any cusp metric. 
The constants $C_1',C_2'$ are 
still  given by  \eqref{e:Ic1}, \eqref{e:Ic2} since 
in \cite[Proposition 14]{wlom} only the principal symbol plays a r\^ole
for $p\geq 1/n$.
The coefficient $C_3'$ can
be computed if we assume that the metric is exact.  It will depend on
the zeta function of $\Delta^M+V_0$. The computation is similar to
\eqref{C_3}.  

Concerning essential self-adjointness, the hypothesis on the
regularity of the potential part can be weakened using \cite{BMS} for
elliptic operators of order $2$. In the result on the absence
of the essential spectrum, one can replace $V$ by $W\in
L^\infty_{\rm loc}$ where $V-W$ tends to $0$ as $x$ tends to infinity
using the Rellich-Kondrakov lemma and the ellipticity of $H_0$.

\section{The analysis of the essential spectrum}
In this Section we prove Theorem \ref{t:Ith1} part \eqref{a)}, see
Proposition \ref{p:thema}. We also diagonalize the Laplacian in two different 
ways. The first one, carried out in Section 
\ref{nonem}, exhibits some key invariant subspaces. It also allows us
to compute the essential spectrum. The second 
one, given in Section \ref{s:diago}, goes one step beyond and reformulates 
the problem in some ``Euclidean'' variables. This will be fully used for 
the positive commutator techniques, see Section \ref{s:Mourre}.

We fix $k\in\{0,\ldots,n\}$ and $p>0$ and introduce the constants
\begin{align}\label{c0c1}
c_0:=\big((2k+2-n)p-1\big)/2, && c_1:=\big((2k-2-n)p+1\big)/2.
\end{align}
The set of \emph{thresholds} is defined as follows:
\begin{align}\nonumber
&\text{for $p<1$, }\kappa(p)=\begin{cases}
\emptyset, & \text{if $b_k(M)=b_{k-1}(M)=0$}, \\
\{0\}, & \text{otherwise}.\\
\end{cases}\\
\label{e:kappa}
&\text{for $p=1$, }\kappa(p)=\big\{c_i^2\in\{c_0^2,c_1^2\}; 
b_{k-i}(M)\neq 0\big\}.\\
\nonumber
&\text{for $p>1$, }\kappa(p)=\emptyset.
\end{align}

\subsection{The high and low energy forms decomposition}\label{nonem} 
We proceed like in \cite{GMo1} and we restrict to metrics which near 
$M=\{x=0\}$ are of the form  
\begin{equation}\label{gp'}
g_p=x^{2p}\left(\frac{dx^2}{x^4}+ h\right).
\end{equation} 
Here $h$ is independent of $x$.
We fix $k$ and localize our computation to the end
$X':=(0,\varepsilon)\times M\subset X$. The objects we study do not
depend on $\varepsilon$. We introduce
\begin{eqnarray}\label{e:Kr}
 \Kr:=L^2\left((0,\varepsilon), x^{(n-2k)p-2} dx\right).
\end{eqnarray} 
 Using  \eqref{decom}, we get:   
\begin{equation*} 
L^2(X',\Lambda^k X)=\Kr\otimes 
\left(L^2(M,\Lambda^k M)\oplus
\frac{dx}{x^2}\wedge L^2(M,\Lambda^{k-1} M)\right). 
\end{equation*}  
Setting $\Hr_{{\rm l}_0}:=\Kr\otimes \ker(\Delta^M_k)$ and
$\Hr_{{\rm l}_1}:=\Kr\otimes \ker(\Delta^M_{k-1})$ and with a slight
abuse of notation, this gives 
\begin{equation}\label{dp} 
L^2(X',\Lambda^k X)=\Hr_{{\rm l}}\oplus \Hr_{\rm h}=
\Hr_{{\rm l}_0}\oplus \Hr_{{\rm l}_1}\oplus
\Hr_{\rm h} \end{equation}
where the space of \emph{high energy forms} $\Hr_{\rm h}$ is by definition the
orthogonal complement of $\Hr_{{\rm l}}:=\Hr_{{\rm l}_0}\oplus
\Hr_{{\rm l}_1}$. This terminology is justified by the next
proposition. See also \cite{lott} for a similar phenomenon with a
different proof, and \cite{launis,nis} for  related applications of
pseudodifferential operators in determining  essential spectra of
Laplacians. 
  
\begin{proposition}\label{p:free}  
The Laplacian $\Delta_{k}$ on $X'$ stabilizes the decomposition 
\eqref{dp}. Let $\Delta_{k}^{{\rm l}_0}$,
$\Delta_{k}^{{\rm l}_1}$ and $\Delta_{k}^{\rm h}$ be the
Friedrichs extensions of the restrictions of $\Delta_{k}$ to these
spaces, respectively. Then $\Delta_{k}^{\rm h}$ has compact
resolvent, and
\begin{align*}    
\Delta^{{\rm l}_0}_{k}=\big(D^*D+c_0^2x^{2-2p}\big)\otimes 1, & 
& \Delta^{{\rm l}_1}_{k}=\big(D^*D+c_1^2x^{2-2p}\big)\otimes 1,
\end{align*}  
where $c_0, c_1$ are defined by \eqref{c0c1}
and $D$ is the closure of $x^{2-p}\px-c_0x^{1-p}$ with initial domain
$\cC^\infty_c\big((0, \varepsilon )\big)$ in $\Kr$. 
\end{proposition}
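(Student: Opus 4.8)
The plan is to exploit the fact that the metric \eqref{gp'} is a warped product on the end $X'$, so that the Laplacian separates along the $x$-direction and the $M$-direction. First I would write the Hodge Laplacian $\Delta_k$ on $X'$ explicitly using the frame $V_0=x^2\px$, $V^0=dx/x^2$ from \eqref{vi} and the orthogonal splitting \eqref{decom}. Because $h$ is independent of $x$ and $\alpha\equiv 0$ here, the de Rham differential and its $g_p$-adjoint $\delta_p^k=x^{(2k-n)p+2}\delta_0x^{(n-2(k+1))p-2}$ are, up to explicit powers of $x$, built out of $d^M$, $\delta^M$ and $x^2\px$; plugging in and computing $d\delta_p+\delta_p d$ gives a $2\times 2$ matrix of operators on $\Kr\otimes(\Lambda^kM\oplus\Lambda^{k-1}M)$. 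The cross terms (off-diagonal entries) and the terms involving $d^M,\delta^M$ act nontrivially only where $\Delta^M_k$ or $\Delta^M_{k-1}$ is nonzero, i.e.\ they vanish on $\ker\Delta^M_k\oplus\ker\Delta^M_{k-1}$; this is the computation underlying the claim that $\Delta_k$ stabilizes \eqref{dp}. One must check carefully that the Friedrichs extension also respects the splitting, which follows because the splitting is by a direct sum of closed subspaces each preserved by the quadratic form, together with the fact that $\cC^\infty_c$ forms decompose compatibly.

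Next, on the pieces $\Hr_{{\rm l}_0}=\Kr\otimes\ker\Delta^M_k$ and $\Hr_{{\rm l}_1}=\Kr\otimes\ker\Delta^M_{k-1}$, the $M$-part of the operator is zero, so only the radial part survives. I would compute this radial part directly: conjugating $x^2\px$ by the appropriate half-density normalization coming from the weight $x^{(n-2k)p-2}dx$ in \eqref{e:Kr}, one finds that the radial operator is $D^*D+c_i^2x^{2-2p}$, where $D=x^{2-p}\px-c_ix^{1-p}$ and the constants $c_0,c_1$ in \eqref{c0c1} arise precisely as the logarithmic derivative of the relevant power of $x$ (they are the same constants that appear when one writes $\delta_p^k$ in terms of $\delta_0^k$). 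The potential term $c_i^2x^{2-2p}$ is a genuine first-order correction; one checks $D^*D=-x^{2-p}\px x^{n-2k)p-2}\cdots$ unwinds to $-x^2\px^2+\cdots$ matching the warped-product Laplacian. This is essentially a one-dimensional Bessel-type computation and I would present only the key identities, not every term.

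Finally, for the high-energy part $\Hr_{\rm h}$ — the orthogonal complement — I would argue that $\Delta_k^{\rm h}$ has compact resolvent. The cleanest route is to observe that on $\Hr_{\rm h}$ the transverse Laplacians $\Delta^M_k$ and $\Delta^M_{k-1}$ are bounded below by their first nonzero eigenvalue $\mu_1>0$, so the quadratic form of $\Delta_k^{\rm h}$ dominates a form of the shape (radial part) $+\,\mu_1 x^{-2p}\cdot(\text{something growing})$; since $x^{-2p}\to\infty$ as $x\to 0$ on the cusp, this is a confining potential and standard criteria (e.g.\ a Rellich-type argument, or directly that $x^{2p}\Delta_k^{\rm h}$ is fully elliptic in the cusp calculus because its normal operator $\mathrm{diag}(\xi^2+\Delta^M_k,\xi^2+\Delta^M_{k-1})$ restricted to $\Hr_{\rm h}$ is invertible for every $\xi$ thanks to $\mu_1>0$) give compactness of the resolvent, invoking the mapping properties from \cite{wlom,meni96c} already used in Corollary \ref{cor3.2}. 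The main obstacle is bookkeeping: getting the weights and the constants $c_0,c_1$ exactly right in the conjugation, and verifying that the Friedrichs extension does not mix the three summands — both are routine in principle but error-prone, so I would set up the half-density trivialization once and carefully at the start and let the three statements fall out of it.
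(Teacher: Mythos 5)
Your treatment of the stabilization of the decomposition \eqref{dp} and the explicit computation of the two low-energy radial operators $D^*D+c_i^2x^{2-2p}$ is essentially what the paper does (the paper writes down $\Delta^{{\rm l}_0}_k=-x^{(2k-n)p}x^2\px x^{(n-2k-2)p}x^2\px\otimes 1$ and its analogue for ${\rm l}_1$, then expands $D^*D$). Your remark that one must check the Friedrichs extension respects the orthogonal splitting is a real point which the paper leaves implicit, and your justification (the quadratic form splits, and $\cunc$ forms decompose compatibly) is the right one.

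The divergence — and the gap — is in your argument for compactness of the resolvent of $\Delta_k^{\rm h}$. Your second route, ``$x^{2p}\Delta_k^{\rm h}$ is fully elliptic because its normal operator restricted to $\Hr_{\rm h}$ is invertible for all $\xi$,'' cannot be run as stated. The cusp calculus of \cite{wlom,meni96c} lives on sections of finite-rank vector bundles over $\oX$. The subspace $\Hr_{\rm h}$ is the orthogonal complement of $\Kr\otimes(\ker\Delta^M_k\oplus\ker\Delta^M_{k-1})$; as a fiberwise subbundle of $\Lambda^k(\ctX)_{|M}$ it is infinite rank, so $\Delta_k^{\rm h}$ is not a cusp operator on $\oX$ to which one could apply full ellipticity. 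If instead you tried to read $\Delta_k^{\rm h}$ as $P_0^\perp\Delta_kP_0^\perp$ on the whole of $L^2(X',\Lambda^kX)$, it is certainly in the cusp calculus but is \emph{not} cusp-elliptic: it vanishes on the nonzero subspace $\Hr_{\rm l}$. There is also a secondary subtlety you skip over: even the bare projection $P_0$, extended constantly in $x$, has normal operator constant in $\xi$, so it is not a suspended operator of order $-\infty$; the paper has to multiply by a Schwartz cutoff $\psi(\xi)$ to land in the correct class.

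The paper's way around this is to add a gentle positive perturbation $R_p^*R_p\in x^{-2p}\Psi_c^{-\infty}$ supported on the low-energy part: $R$ has Schwartz kernel $\eta(x)\hat\psi((x-x')/x^2)\eta(x')\kappa_P$, so that $\cN(R)(\xi)=\psi(\xi)P$ and $\cN(x^{2p}\Delta_k+R_p^*R_p)(\xi)=\mathrm{diag}(\xi^2+\Delta^M_k,\xi^2+\Delta^M_{k-1})+\psi^2(\xi)P$ is strictly positive. The operator $\Delta_k+R_p^*R_p$ on the whole manifold is then fully elliptic, hence (by \cite[Theorem 17]{wlom}) has purely discrete spectrum; since $R_p^*R_p$ preserves \eqref{dp} and kills $\Hr_{\rm h}$, one transfers the discreteness to $\Delta_k^{\rm h}$ by the decomposition principle \cite[Proposition C.3]{GMo1}. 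Your first route (the $\mu_1 x^{-2p}$ confining-potential heuristic plus a Rellich-type argument) is sound in spirit — it is really the same mechanism — but it is not developed here, and making it rigorous for the Friedrichs extension on the end $X'$, with the correct half-density weights and the Dirichlet condition at $x=\varepsilon$, is precisely the kind of bookkeeping the paper's pseudodifferential perturbation avoids. So: if you mean to hand in the fully-elliptic route, it will fail; if you mean the Rellich route, you need to actually carry it out.
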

\begin{proof}
The de Rham operator on $X'$ stabilizes the orthogonal decomposition 
\eqref{dp}, so the Laplacian $d\delta+\delta d$ does the same. Let $P$
denote the orthogonal  
projection in $L^2(M,\Lambda^k M\oplus\Lambda^{k-1} M)$ onto the 
finite-dimensional space $\ker(\Delta^M_k)\oplus \ker(\Delta^M_{k-1})$ 
of harmonic forms.
Choose a real Schwartz cut-off function $\psi\in\cS(\rz)$ with $\psi(0)=1$. 
Then $\psi(\xi)P$ defines a suspended operator of order $-\infty$
(see e.g., \cite[Section 2]{kso}). From \eqref{cndkp} we see that
$\cN(x^{2p}\Delta_{k})(\xi)+\psi^2(\xi)P$  is strictly positive,
hence invertible for all $\xi\in\rz$. By the surjectivity of the
normal operator, there exists $R\in\Psi_c^{-\infty}(X,\Lambda^k X)$
such that in the decomposition \eqref{decom} over $M$,
$\cN(R)(\xi)=\psi(\xi)P$. Fix $\phi\in\cC^\infty_c(X)$ which equals $1$ on the
complement of $X'$ in $X$, and yet another cut-off function
$\eta$ on $\oX$ which is $1$ near $M$ and such that $\eta\phi=0$.
By multiplying $R$ both to the left and to the right by $\eta$ we can assume that
$R\phi=\phi R=0$, without changing $\cN(R)$. The Schwartz kernel of
$R$ can be chosen explicitly
\begin{equation*}
\kappa_R(x,x',z,z')=\eta(x)\hat{\psi}\left(\frac{x-x'}{x^2}\right)\eta(x')
\kappa_P(z,z')
\end{equation*}
where $\kappa_P$ is the Schwartz kernel of $P$ on $M^2$ and $\hat{\psi}$
is the Fourier transform of $\psi$. Assume now that 
$\hat{\psi}$ has compact support, thus $R$ preserves the space 
$\cunc(X',\Lambda^k X)$.
Let $R_p:=x^{-p}R\in x^{-p}\Psi_c^{-\infty}(X,\Lambda^k X)$.
Then $R_p^*R_p\in x^{-2p}\Psi_c^{-\infty}(X,\Lambda^k X)$
is symmetric on $\cunc(X,\Lambda^k X)$ with respect to
$dg_p$. Moreover  $\Delta_{k}+R_p^* R_p$ is fully elliptic, so 
by \cite[Theorem 17]{wlom}, it is essentially self-adjoint on
$\cunc(X,\Lambda^k X)$ and has purely discrete spectrum. 
Now, noticing that $R$ preserves the decomposition \eqref{dp},
and acts by $0$ on $\Hr_{\rm h}$ and by using the decomposition principle
\cite[Proposition C.3]{GMo1} for $\Delta_{k}$ and
$\Delta_{k}+R_p^* R_p$, we deduce that  
$\sigma_{\rm ess}(\Delta_{k}+R_p^*R_p)=\sigma_{\rm ess}(\Delta^{\rm
h}_{k})=\emptyset$.

For the low energy forms, one gets $\Delta^{{\rm l}_0}_{k}=-x^{(2k-n)p}x^2\px
x^{(n-2k-2)p}x^2\px\otimes 1$, 
acting in $\Hr_{{\rm l}_0}$, and $\Delta^{{\rm l}_1}_{k}=-x^2\px
x^{(2k-2-n)p}x^2\px x^{(n-2k)p}\otimes 1$ acting in $\Hr_{{\rm
    l}_1}$. The proof is finished by expanding 
$D^*D$. \end{proof}   

\begin{proposition}\label{p:thema}
Let $(X,g_p')$ be a Riemannian manifold with metric 
$g_p'$ satisfying the bounds \eqref{e:asympto} and \eqref{e:asympto2}
with respect to  $g_p=x^{2p}g_0$, for some exact cusp metric $g_0$.
\begin{enumerate} 
\item For $0<p\leq 1$, consider the Friedrichs extension  of $\Delta_{k}$. 
Then its essential spectrum is given by $[\inf(\kappa(p)), \infty)$.
\item If $p>1$ and $g_p':=g_p$ is the
unperturbed metric given in \eqref{gp'}, then every self-adjoint
extension of $\Delta_{k}$ has empty essential spectrum.   
\end{enumerate} 
\end{proposition}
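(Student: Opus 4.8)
The plan is to reduce the computation of the essential spectrum to the model cusp end $X'=(0,\varepsilon)\times M$ via the decomposition principle \cite[Proposition C.3]{GMo1}, so that $\siges(\Delta_k)=\siges(\Delta_k^{X'})$, and then to use the splitting \eqref{dp} into high and low energy forms. By Proposition \ref{p:free}, $\Delta_k^{\rm h}$ has compact resolvent, so it contributes nothing to the essential spectrum; hence $\siges(\Delta_k^{X'})=\siges(\Delta_k^{{\rm l}_0})\cup\siges(\Delta_k^{{\rm l}_1})$. For each $i\in\{0,1\}$, the space $\Hr_{{\rm l}_i}$ is nonzero precisely when $b_{k-i}(M)\neq 0$, and on it $\Delta_k^{{\rm l}_i}=D^*D+c_i^2 x^{2-2p}\otimes 1$ with $D$ the one-dimensional operator of Proposition \ref{p:free}. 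So everything comes down to analyzing the half-line operators $D^*D+c_i^2 x^{2-2p}$ on $\Kr=L^2((0,\varepsilon),x^{(n-2k)p-2}dx)$.

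First I would carry out the substitution that turns $\Kr$ into a flat $L^2$ space. Using the variable $L$ from \eqref{e:L} — i.e.\ $L=\ln x^{-1}$ for $p=1$ and $L\sim x^{1-p}/(1-p)$ for $p<1$ — together with the unitary multiplication by the appropriate power of $x$ absorbing the weight, the operator $D^*D$ becomes (up to a bounded and, near $x=0$, decaying perturbation) $-\partial_L^2$ on $L^2((L_0,\infty),dL)$, and the potential term $c_i^2 x^{2-2p}$ becomes: for $p<1$, a term comparable to $c_i^2 L^{-2}$ (hence relatively compact, decaying at infinity), so that $\siges(\Delta_k^{{\rm l}_i})=[0,\infty)$; for $p=1$, $x^{2-2p}=1$ so the potential is the constant $c_i^2$, giving $\siges(\Delta_k^{{\rm l}_i})=[c_i^2,\infty)$. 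This exactly matches the definition \eqref{e:kappa} of $\kappa(p)$: for $p<1$ the thresholds are $\{0\}$ when some $b_{k-i}(M)\neq0$, and for $p=1$ they are the $c_i^2$ for which $b_{k-i}(M)\neq0$. Taking the union over the nonempty low-energy pieces gives $\siges(\Delta_k)=[\inf\kappa(p),\infty)$, with the convention $\inf\emptyset=+\infty$ absorbed into case (2). For $p>1$ the weight $x^{(n-2k)p-2}dx$ and the structure of $D$ force $x^{2-2p}\to\infty$ as $x\to0$, so even the low-energy operators have purely discrete spectrum; combined with the compactness of the $\Delta_k^{\rm h}$-resolvent, every self-adjoint extension of $\Delta_k$ on $X'$ — hence on $X$, by the decomposition principle — has empty essential spectrum, which is statement (2).

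The remaining work, and the main obstacle, is to handle the \emph{perturbed} metric $g_p'$ rather than the exact model: one must show that the bounds \eqref{e:asympto}, \eqref{e:asympto2} relating $g_p'$ to $g_p$ guarantee that the difference $\Delta_{k,g_p'}-\Delta_{k,g_p}$ does not move the essential spectrum. I would do this by writing $\Delta_{k,g_p'}$ on the cusp end as $\Delta_{k,g_p}$ plus a perturbation whose coefficients, when conjugated into the flat variables above, are bounded and tend to $0$ at infinity; such a perturbation is relatively compact with respect to $-\partial_L^2$ (using the Rellich–Kondrachov lemma together with the ellipticity of the leading operator, exactly as in the Appendix and in \cite{GMo1}), hence preserves $\siges$. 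One subtlety is that changing the metric also changes the identification of forms and the volume density, so the unitary equivalence to the flat model must be set up for $g_p'$ directly and then compared to the one for $g_p$; keeping track of the off-diagonal (high–low) couplings introduced by the perturbation, and checking they are relatively compact, is the technically delicate point. Once that is in place, the essential spectrum of $\Delta_{k,g_p'}$ on $X'$, and therefore on $X$, coincides with that of the exact model, completing the proof.
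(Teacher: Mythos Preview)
Your approach to part (1) matches the paper's: reduce to the cusp, split into high and low energy via Proposition~\ref{p:free}, and read off the essential spectrum of each $\Delta_k^{{\rm l}_i}$ after passing to the flat variable $r=L(1/x)$. The one place you diverge is the treatment of the perturbed metric $g_p'$. The paper does not attempt a direct relative-compactness argument on the cusp; instead it invokes Proposition~\ref{p:stabess} (the stability result in the Appendix, built precisely on the hypotheses \eqref{e:asympto} and \eqref{e:asympto2}) to replace $g_p'$ by $g_p$, and then once more to replace the general exact cusp metric $g_p$ by the special model \eqref{gp'} for which Proposition~\ref{p:free} is actually stated. This sidesteps entirely the ``technically delicate point'' you flag about off-diagonal high--low couplings introduced by the perturbation. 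Your hands-on route could presumably be made to work, but it reproduces by hand what Proposition~\ref{p:stabess} already packages, and you would still need the second reduction (exact cusp to model) that you do not mention.

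For part (2) there are two genuine gaps. First, the claim that $c_i^2 x^{2-2p}\to\infty$ forces discrete spectrum for the low-energy pieces is valid only if $c_i\neq 0$; if $c_i=0$ the potential vanishes identically and you are left with $D^*D$ alone, whose essential spectrum you have not analyzed. The paper checks this explicitly: from \eqref{c0c1}, $c_i=0$ would force $1/p=\pm(2k\pm 2-n)\in\mathbb{Z}$, which is impossible for $p>1$. Second, the statement concerns \emph{every} self-adjoint extension of $\Delta_k$ (the manifold is incomplete for $p>1$), not just the Friedrichs one. The paper handles this by invoking \cite[Lemma~B.1]{GMo1} together with the Krein formula to show that all self-adjoint extensions share the same essential spectrum, and only then runs the cusp argument for the Friedrichs extension; your appeal to the decomposition principle alone does not supply this step.
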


In particular, when $X$ is complete (i.e., $p\leq 1$) 
the Laplacian of $g_p$ on forms of degree
$0$ and $1$ always has non-empty essential spectrum. If moreover 
the boundary at infinity $M$ has at least one orientable connected component, 
then the same holds for forms of degrees $n-1$ and $n$.
Note that Theorem \ref{t:Ith} does not follow from Proposition 
\ref{p:thema} since in Section
\ref{lcm} we do not assume the metric to be exact.
\proof We start with the complete case. For a smooth complete metric, 
the essential self-adjointness is a well-known general fact \cite{Ga}. 
As the metric we consider is not smooth, we consider the Friedrichs extension. 
In the exact case, $g_p$ is quasi-isometric to   
the metric \eqref{gp'}. Using Proposition \ref{p:stabess},
in order to compute the essential spectrum we may replace $h(x)$ in
\eqref{cume2} by the metric $h_0:=h(0)$ on $M$, extended to a
symmetric $2$-tensor constant in $x$ near $M$, and we may also set $\rho=0$. 
By \cite{GMo1}[Lemma C.1], the essential spectrum $\siges(\Delta_k)$
is given by  
$\cup_i \siges(\Delta^{\rm l_i}_k)$ on $X'$ of Proposition \ref{p:free}. 
To conclude, remark that the essential spectrum of $D^*D$ is $[0,\infty)$
and the essential spectrum of $\Delta^{\rm l_i}_k$ is $[\lim_{x\rightarrow 0} 
c_i x^{2-2p}, \infty)$ since a bounded potential tending to $0$ is 
a relatively compact perturbation
and thus does not affect the essential spectrum. One may also
compute the essential spectrum using \eqref{e:H_0}.

Let now $p> 1$. The metric is no longer complete so one can not apply
Proposition \ref{p:stabess}. By \cite[Lemma B.1]{GMo1} and 
by the Krein formula, all self-adjoint extensions have the same essential
spectrum so it is enough to consider the Friedrichs extension of
$\Delta_k$.  We now use Proposition \ref{p:free} and \cite[Lemma C.1]{GMo1}. 
The operator $D^*D$ is non-negative, so the spectrum of
$\Delta_k^{\rm{l}_i}$ is contained in  $[\varepsilon^{2-2p}c_i^2,\infty)$. By
\cite[Lemma C.1]{GMo1},  the essential spectrum does not depend  on
the choice of $\varepsilon$.  Now we remark that $p>1$ implies
$\lim_{\varepsilon\to 0}\varepsilon^{2-2p}=\infty$ and also that
$c_i\neq 0$, $i=0,1$ for the constants $c_0,c_1$ defined by \eqref{c0c1}. 
Indeed, the equality $c_i= 0$ would  imply $1/p=\pm (2k\pm2-n)\in\zz$, which
contradicts $p>1$. Thus  by letting $\varepsilon \to 0$ we conclude that the
essential spectrum of $\Delta_k$ is empty.\qed

\section{The Mourre estimate}\label{s:Mourre}
In Mourre theory, one has to construct a conjugate operator in
order to obtain the positivity of a commutator. To this purpose in
Section \ref{s:diago} we write $\Delta_{k}$ near infinity
in terms of the function $L$ defined in equation \eqref{e:L}
instead of the boundary defining function $x$. In Section \ref{s:conj}
we construct the conjugate operator in terms of this new variable. 
Its support is in the cusp, so one
can restrict the analysis there. In Section \ref{s:unper} we prove
the Mourre estimate for the unperturbed metric in Theorem
\ref{t:mourre_0}. Finally, in Section \ref{s:SRLR} we show the main
Theorem \ref{t:Ith1} for the perturbed metric $g_p$ given in
\eqref{gp'}. In this section, we concentrate on the complete case,
i.e.\ $p\leq 1$, otherwise there is no essential spectrum and the
whole analysis becomes trivial (one may take the conjugate operator to be $0$).

\subsection{Diagonalization of the free Laplacian}\label{s:diago}
We now construct a partial isometry. We carry out the analysis on the
cusp $X'$. We start with  \eqref{dp} and work on $\Kr$, see 
\eqref{e:Kr}. We conjugate first through the unitary transformation
\begin{align}\label{ut}
L^2\left( x^{(n-2k)p-2} dx\right)\to L^2\left( x^{p-2} dx\right)&&
\phi\mapsto x^{(n-2k-1)p/2}\phi
\end{align}
Then we proceed with the change of variables 
\begin{align}\label{ut'}
 r:=L(1/x),&& \text{ where $L$ is given by \eqref{e:L}}.
\end{align} 
Therefore, $\Kr$ is unitarily sent into
$\Kr_0:=L^2\big((c,\infty), dr\big)$ for some positive $c$. By tensoring
with the identity on $M$, we have constructed a unitary
transformation:
\begin{eqnarray*}
\cU: L^2\big( (0, \varepsilon )\times M, \Lambda^k X, g_p\big)
\longrightarrow L^2\big( (c, \infty)\times M, \Lambda^k X, dr\,
\mbox{vol}(h)\big)=:\Hr_0.  
\end{eqnarray*} 
We set $X_0':= (c, \infty)\times M$. Note that $\cU$ is an isomorphism
between $\cC^\infty_c( X',\Lambda^*X)$ and
$\cC^\infty_c(X_0',\Lambda^*X)$.  

Set $L_0:= \cU L \cU^{-1}$,  where
the operator of multiplication corresponding to $L$, given by
\eqref{e:L}. We choose $\varepsilon$ small enough so as to get $L_0$ is
the operator of multiplication by $(r,m)\mapsto r$ in $\Hr_0$.  
We consider the closure of $\Delta_{k,0}:= \cU \Delta_k \cU^{-1}$ defined
above $\cC^\infty_c(X_0',\Lambda^*X)$.  Recalling \eqref{dp},
it acts on $\cC^\infty_c
\big((c,\infty)\big) \otimes \cC^\infty(M, \Lambda^k  M\oplus
\Lambda^{k-1} M)$ as  
\begin{align}\label{e:H_0} 
\Delta_{k,0}= \sum_{i=0,1}(-\partial^2_r + V_{p_i})\otimes
P_{\ker(\Delta^M_{k-i})} + \Delta_{k,0}^{\rm h}\otimes P_0^\perp,&&
V_{p_i}(r)=\begin{cases}
c_i^{2} & \text{for $p=1$}\\
a_i/r^2 & \text{for $p<1$}
\end{cases}
\end{align}
for certain $a_i$, where $P_0:=P_{\ker(\Delta^M_{k})}\oplus
P_{\ker(\Delta^M_{k-1})}$, and $c_0$, $c_1$ are defined in
\eqref{c0c1}. Note that $\Delta_{k,0}$ is not self-adjoint. Its
spectrum is $\C$. We will only use it as an auxiliary operator. 

\subsection{The conjugate operator}\label{s:conj}
We now construct a conjugate operator so as to establish a Mourre
estimate for the Laplacian acting on $k$-forms for the free metric
$g=g_p$, given by \eqref{gp'}.  
This section is close to \cite{GMo1}[Section 5.3] for the commutator properties
but the proof of the Mourre estimate differs. We provide full details.
Let $\xi\in\cC^\infty\big((c,\infty)\big)$ such that the
  support of $\xi$ is 
contained in $[3c,\infty)$ and that $\xi(r)=r$ for $r\geq 4c$ and let
$\tilde \cchi\in\cC^\infty\big((c,\infty)\big)$ with support in
$[2c,\infty)$, which equals $1$ on $[3c,\infty)$. By abuse of notation,
we denote $\tilde\cchi\otimes 1$ and $\xi\otimes 1$
in $\cC^\infty(X_0')$ by $\tilde \cchi$ and $\xi$, respectively. 
We also write with the same symbol an operator of multiplication by a
function acting in a Hilbert space and the function.
Choose $\Phi\in\cC^\infty_c(\R)$ with $\Phi(x)=x$
on $[-1,1]$, and set $\Phi_R(x):=R\Phi(x/R)$. We define on
$\cC^\infty_c(X_0', \Lambda^k X)$ a micro-localized version of the
generator of dilations:
\begin{equation}\label{e:SR}
S_{R,0}:= \tilde\cchi \big(\Phi_R(-i\partial_r) \xi +\xi
\Phi_R(-i\partial_r)\big)\otimes P_0 \,\tilde\cchi.
\end{equation}
The operator $\Phi_R(-i\partial_r)$ is defined on the real line
by $\Fr^{-1}\Phi_R\Fr$, where $\Fr$ is the unitary Fourier
transform. We also denote its closure by $S_{R,0}$.
Let $\widetilde\mu_0\in\cC^{\infty}(X_0')$ be with support in $X_0'$ such that 
$\widetilde\mu_0|_{[2c,\infty)\times M}=1$. Set the operator of
multiplication $\widetilde\mu:= \cU^{-1}\widetilde\mu_0\,\cU$. We extend
it by $0$ above the compact part of $X$. On $\cC^\infty_c(X,
\Lambda^k X)$, we set:
\begin{align}\label{e:SRvrai}
S_{R}:= \cU^{-1}S_{R,0}\,\cU\,\widetilde\mu
\end{align}
We denote also by $S_R$ its closure. Note that $S_{R}$ does not
stabilize $\cC^\infty_c(X, \Lambda^k X)$ since $\Phi_R(-i\partial_r)$
acts like a convolution with a function with non-compact support.  However,
$\cC^\infty_c(X, \Lambda^k X)$ is sent into the restriction of the
Schwartz space  $\cU^{-1}\big(\tilde\cchi \Sr(\R)\otimes \im(P_0)\big)$.

By taking $R=\infty$, this operator is also self-adjoint and 
one recovers the conjugate operator initiated in
\cite{FH} for the case of the Laplacian. The drawback of this
operator is that it does not allow very singular perturbation theory
like the one of the metric we consider, see also \cite{GMo1}. Since
here $\Phi_R$ is with compact support, one is able to replace $S_{R}$
by $L$ in the theory of perturbation. We prove some results of
compatibility in the next Lemma. From now on $R$ is finite.

\begin{lemma}\label{l:sa}
For all $R\geq 1$, the operator $S_R$ has the following properties:
\begin{enumerate}
\item  it is essentially
self-adjoint on $\cC^\infty_c(X, \Lambda^k X)$.
\item for every $s,t\in \R^+$, 
$L^{-2}S_{R}^2  \Dc(\Delta_{k}^s)\subset \Dc(\Delta_k^{t})$. \label{p2} 
\item for all $s\in[0,2]$, $\Dc(L^s)\subset \Dc(|S_{R}|^s)$ .
\label{p3}
\end{enumerate}
\end{lemma}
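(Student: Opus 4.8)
The strategy is to reduce everything to the explicit model on the cusp, where $S_R$ is built out of $\Phi_R(-i\partial_r)$, $\xi$, and the finite-rank bundle projection $P_0$, and then transport the conclusions back through the unitary $\cU$ and the cutoff $\tilde\mu$. Throughout I will use that $\Delta_k$ is essentially self-adjoint on $\cC^\infty_c(X,\Lambda^kX)$ in the complete case ($p\le 1$), that its domain is the weighted Sobolev space described in Corollary \ref{cor3.2}, and that on $X_0'$ the operator $\cU\Delta_k\cU^{-1}$ has the block form \eqref{e:H_0}.

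\textbf{Step 1: essential self-adjointness of $S_R$.} On the cusp model, $S_{R,0}$ as defined in \eqref{e:SR} is a sum of a symmetric first-order operator in the $r$-variable — a smoothed, compactly-symbol'd version of the generator of dilations $\Phi_R(-i\partial_r)\xi + \xi\Phi_R(-i\partial_r)$ — tensored with the finite-rank projection $P_0$, sandwiched between the bounded cutoff $\tilde\cchi$. Because $\Phi_R$ has compact support, $\Phi_R(-i\partial_r)$ is a bounded self-adjoint operator on $L^2(\R)$ (indeed with a bounded, smooth convolution kernel), so $S_{R,0}$ is a first-order operator with bounded coefficients on the half-line times a finite-dimensional fiber; it is essentially self-adjoint on $\cC^\infty_c$ by the standard fact that a symmetric first-order ODE operator with smooth bounded coefficients, multiplied by a cutoff vanishing near the finite endpoint $r=c$, is e.s.a. (no boundary condition is needed since $\tilde\cchi$ kills the endpoint). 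Conjugating by $\cU$ and composing with the multiplication $\tilde\mu$, which is identically $1$ on the support of $\tilde\cchi$, leaves $S_R$ symmetric on $\cC^\infty_c(X,\Lambda^kX)$; essential self-adjointness passes through the unitary $\cU$ on the cusp, and away from the cusp $S_R$ acts as $0$, so one concludes by a simple commuting-decomposition argument (or by Nelson's commutator theorem with comparison operator $\Delta_k+1$, using Step 2).

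\textbf{Step 2: the mapping property \eqref{p2}.} By Corollary \ref{cor3.2}, $\Dc(\Delta_k^s)$ is a weighted cusp Sobolev space, and on the cusp end $\Delta_{k,0}$ has the diagonal form \eqref{e:H_0}: on the low-energy part it is $-\partial_r^2 + V_{p_i}(r)$ with $V_{p_i}$ bounded (for $p=1$) or $O(r^{-2})$ (for $p<1$), and on the high-energy part it is elliptic with compact resolvent. The point is that $L^{-2}S_R^2$, written on the cusp, equals $r^{-2}$ times (a bounded function of $-i\partial_r$, of any order, times powers of $\xi\sim r$, times $P_0$), modulo the cutoffs. Since $P_0$ projects onto the kernel of the Laplacian on $M$, the operator $L^{-2}S_R^2$ lands in the low-energy sector where $\Delta_{k,0}$ is a one-dimensional Schrödinger operator, and there $r^{-2}\xi^2 \Phi_R(-i\partial_r)^2$ maps the scale $\Dc((-\partial_r^2+1)^s)$ into $\Dc((-\partial_r^2+1)^t)$ for any $s,t$, because $\Phi_R(-i\partial_r)^2$ is smoothing of every order in the Fourier variable and the remaining factor $r^{-2}\xi^2$ is bounded with all derivatives bounded on $[3c,\infty)$. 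Transporting back through $\cU$ and using that the cutoffs confine everything to the cusp gives $L^{-2}S_R^2\,\Dc(\Delta_k^s)\subset\Dc(\Delta_k^t)$. The one technical point to verify carefully is the interpolation/commutation of the weight $r^{-2}$ with powers of $-\partial_r^2+1$, which is handled by the standard Hardy-type estimate $\|r^{-1}u\|\lesssim\|\partial_r u\|$ on $(c,\infty)$ together with commutator bounds.

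\textbf{Step 3: the domain inclusion \eqref{p3}.} For $s=2$ this is essentially Step 2 with a twist: $|S_R|^2 = S_R^2 \le C L^2$ as quadratic forms, since on the cusp $S_R^2 = \tilde\cchi(\cdots)\xi\Phi_R^2\xi(\cdots)\tilde\cchi P_0$ and $\xi^2\le C r^2 = C L_0^2$ with $\Phi_R(-i\partial_r)^2$ bounded; hence $\Dc(L^2)\subset\Dc(S_R^2)=\Dc(|S_R|^2)$. For $s=0$ the inclusion is trivial. For $s\in(0,2)$ one interpolates: both $|S_R|^s$ and $L^s$ are obtained by complex interpolation from the endpoints (using that $L\ge 1$ so its powers form an interpolation scale, and likewise for $|S_R|$ via the spectral theorem), and the bounded inclusion $\Dc(L^2)\hookrightarrow\Dc(|S_R|^2)$ together with the trivial $\Dc(L^0)\hookrightarrow\Dc(|S_R|^0)$ interpolates to $\Dc(L^s)\hookrightarrow\Dc(|S_R|^s)$ for all $s\in[0,2]$. \textbf{The main obstacle} I anticipate is bookkeeping the transport between the three pictures — the Hilbert space $L^2(X,\Lambda^kX,g_p)$, the weighted space $\Kr$, and the ``Euclidean'' space $\Hr_0$ in the $r$-variable — keeping track of where $\cU$ is only a partial isometry, and making sure the cutoffs $\tilde\cchi$, $\tilde\mu$, $\tilde\mu_0$ are deployed so that every manipulation genuinely takes place on the cusp where the explicit formula \eqref{e:H_0} is valid; once that is organized, each of the three assertions reduces to an elementary statement about the one-dimensional operator $-\partial_r^2$ versus multiplication by $r$ and bounded functions of $-i\partial_r$.
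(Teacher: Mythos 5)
Your Step 2 matches the paper's argument: one reduces, via \eqref{e:reecrit} and the fact that $\Phi_R(-i\partial_r)$ is infinitely smoothing (compact support of $\Phi_R$), to showing that an operator of the form \eqref{e:carre} is bounded, which follows by commuting weights and using stability of $\R$-Sobolev spaces. However, Steps 1 and 3 contain genuine gaps.

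\textbf{Step 1.} The claim that $S_{R,0}$ is ``a symmetric first-order ODE operator with smooth bounded coefficients'' is not correct. The operator $\Phi_R(-i\partial_r)$ is a \emph{bounded} operator (compactly supported multiplier in the Fourier variable), not a first-order differential operator, and the unboundedness of $S_{R,0}$ comes entirely from the factor $\xi(r)\sim r$. Thus $S_{R,0}$ is roughly ``bounded $\Psi$DO times multiplication by $r$'', a qualitatively different object from a first-order ODE operator, and no off-the-shelf completeness-of-flow argument applies. Your fallback --- Nelson's commutator theorem with comparison operator $\Delta_k+1$ --- also fails: on the low-energy cusp $\Delta_{k,0}\sim -\partial_r^2+V_{p_i}$ controls derivatives but \emph{not} multiplication by $r$, so $\|S_R u\|\lesssim\|(\Delta_k+1)u\|$ is simply false, and Step 2 (a bound on $L^{-2}S_R^2(\Delta_k+1)^s$) gives no such estimate either. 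The paper instead applies Nelson's commutator theorem \cite[Theorem X.37]{RS2} with the comparison operator $N=L$, which on the cusp is precisely multiplication by $r$ and therefore dominates $S_R$; both the bound $\|S_{R}\varphi\|\leq a'\|L\varphi\|$ and the commutator bound $|\langle S_R\varphi,L\varphi\rangle-\langle L\varphi,S_R\varphi\rangle|\leq b'\|L^{1/2}\varphi\|^2$ are checked explicitly there.

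\textbf{Step 3.} The inference ``$S_R^2\leq CL^2$ as quadratic forms, hence $\Dc(L^2)\subset\Dc(S_R^2)$'' is a logical error: a form inequality $A\leq B$ between non-negative self-adjoint operators yields $\Dc(B^{1/2})\subset\Dc(A^{1/2})$, so $S_R^2\leq CL^2$ gives only $\Dc(L)\subset\Dc(|S_R|)$ --- the $s=1$ case --- not the $s=2$ endpoint you need for interpolation. The paper instead uses the operator-norm boundedness of $S_R^2L^{-2}$ established in Step 2 to obtain the genuine norm estimate $\|S_R^2\varphi\|\leq c\|L^2\varphi\|$, which does give $\Dc(L^2)\subset\Dc(S_R^2)$, and then interpolates between $s=0$ and $s=2$. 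You should replace your form-inequality argument by this norm estimate.
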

\proof 
We compare $S_R$ with the operator $L$, which is
essentially self-adjoint on $\cC^\infty_c(X, \Lambda^k X)$. Moreover $L$
stabilizes the decomposition \eqref{dp} and we denote by $L_{\rm l}$
its restriction to $\Hr_{\rm l}=\Hr_{\rm   l_0}\oplus \Hr_{\rm l_1}$. 
Set $L_{\rm{l}, 0}:= \cU L_{\rm{l}}\, \cU^{-1}$, which is simply
multiplication by $r$. For short, we write $\Phi_R$ instead of
$\Phi_R(-i\partial_r)$. Take  $\varphi\in \cC^\infty_c(X,\Lambda^k
X)$. Note that $\tilde \varphi:= \cU\, \tilde \mu \varphi \in
\cC^\infty_c(X_0',\Lambda^k X)$.  We have
$S_{R,0} \widetilde \varphi= \tilde \cchi\big(\Phi \xi r^{-1}
- \xi r^{-1}\Phi_R'r^{-1} + \xi r^{-1}\Phi_R \big)\tilde \cchi L_0
\widetilde \varphi$.
Since $\xi r^{-1}$ is bounded in $L^2(X_0,\Lambda^kX)$, we get
$\|S_{R,0} \widetilde \varphi\|\leq a \|L_0 \widetilde\varphi\|$, in
$L^2(X_0', \Lambda^kX)$. Remembering $L\geq 1$, we derive there is $a'$ so
that $\|S_{R} \varphi\|\leq a' \|L \varphi\|$ in $L^2(X,\Lambda^kX)$. 
To be precise, one needs first to approximate $\widetilde \varphi$
with $\widetilde \varphi_\R\otimes (\widetilde \varphi_M+dr\wedge\widetilde
\varphi_M')$, where  $\widetilde \varphi_\R\in\cC^\infty_c(X_0')$, $\widetilde
\varphi_M\in\cC^\infty(M, \Lambda^k M)$ and 
$\widetilde \varphi_M'\in\cC^\infty(M, \Lambda^{k-1} M)$. 

On the other hand, we compute against $\tilde \varphi$ in the form
sense and get
\begin{align*}
[S_{R,0}, L_0] =&
\tilde\cchi\big([\Phi_R , r]\xi  \otimes
P_0 + \xi
[\Phi_R , r] \otimes P_0\big)\tilde\cchi   
=i \tilde\cchi\big(\Phi_R' r r^{-1}\xi  \otimes P_0 + \xi r^{-1}r
\Phi_R'  \otimes P_0\big)\tilde\cchi \\
=&i \tilde\cchi\big(r^{1/2} \Phi_R'  r^{1/2}r^{-1}\xi  \otimes 
P_0 + r^{-1}\xi r^{1/2} \Phi_R'  r^{1/2}  \otimes 
P_0 \big)\tilde\cchi \\
&+i\tilde\cchi\big( [\Phi_R' , r^{1/2}] r^{1/2}r^{-1}\xi  \otimes 
P_0 + r^{-1}\xi  [\Phi_R' , r^{1/2}] r^{1/2}  \otimes 
P_0 \big)\tilde\cchi
\end{align*}
This gives $|\langle S_{R,0} \widetilde \varphi, L_0\widetilde\varphi \rangle
-\langle L_0\widetilde\varphi, S_{R,0}\widetilde\varphi \rangle| \leq
b \|L_0^{1/2}\widetilde\varphi\|^2$ in $L^2(X_0', \Lambda^kX)$. Since
$L\geq 1$, we infer  $|\langle S_R \varphi,
L\varphi \rangle -\langle L\varphi, S_{R}\varphi
\rangle| \leq b' \|L^{1/2}\varphi\|^2$ in $L^2(X,
\Lambda^k X)$. Finally, we conclude
that $S_R$ is essentially self-adjoint on $\cC^\infty_c(X, \Lambda^k X)$
by using \cite[Theorem X.37]{RS2}.

We turn to point (2). By interpolation, it is enough to show that
$L^{-2} S_R^2  (\Delta_k+1)^s$ and $S_R^2L^{-2}(\Delta_k+1)^s$,
defined on $\cC^\infty_c(X, \Lambda^kX)$, extend to
bounded operators in $L^2(X, \Lambda^kX)$, for all $s\in\N$.
The treatment being similar, we deal only with the first term.

Since $\cchi$ stabilizes all the Sobolev spaces in
$\R$, using \eqref{e:H_0} we obtain there is $c$ so that
\begin{eqnarray}\label{e:Deltachange}
\|(\Delta_\R+i)^{-s} \otimes P_0\tilde \cchi (\Delta_{k,0}+i)^s
\widetilde \varphi\| \leq c\| \widetilde \varphi \|,
\end{eqnarray} 
for all $\widetilde\varphi$ taken like above. Here $\Delta_\R$ denotes
the positive Laplacian in $L^2(\R)$. Now remark that thanks to the
cut-off functions $\tilde \cchi$, there is $g\in\cC^\infty_c(\R)$
such that
\begin{eqnarray}\label{e:reecrit}
S_{R,0}= \tilde\cchi \big(2r \Phi_R +i\Phi_R' + g \Phi_R+ \Phi_R g
\big)\otimes P_0 \, \tilde\cchi.
\end{eqnarray} 
Then thanks to \eqref{e:Deltachange} and by going back to $\varphi$ as
above, the result follows if
\begin{eqnarray}\label{e:carre}
r^{-2} \big(2r \Phi_R +i\Phi_R' + g \Phi_R+ \Phi_R g
\big)\tilde\cchi^2 \big(2r \Phi_R +i\Phi_R' + g \Phi_R+ \Phi_R g
\big) (\Delta_\R+1)^{s}
\end{eqnarray} 
defined on $\cC^\infty_c(\R)$ extends to a bounded operator. This
follows easily by commuting all the $r$ to the left and by using the
stability of Sobolev spaces under the  multiplication by a smooth
function with bounded derivatives. 

We focus now on point (3). Using \eqref{e:carre}, we have shown in
particular that  $S_{R}^2L^{-2}$ is bounded in $L^{2}(X,
\Lambda^kX)$. We infer $\|S_{R}^2\varphi\|^2 \leq c
\|L^2\varphi\|$ for all $\varphi \in \cC^\infty(X, \Lambda^k
X)$. Taking a Cauchy sequence, we deduce $\Dc(L^2)\subset
\Dc(S_{R}^2)$. An argument of interpolation concludes.\qed 

\subsection{The Mourre estimate for the unperturbed metric}\label{s:unper}
The main result of this section is the following Mourre estimate for
the free operator $\Delta_{k}$. The proof is more involved than in
\cite{GMo1} although the computations of commutators are essentially the same. 
The problem comes from the fact that $\Delta_{k}$ has
\emph{two} thresholds, not only one like the magnetic Laplacian acting on
functions. Above the two thresholds we can pursue the same analysis as
\cite{GMo1}. Morally speaking, as in $3$-body problems, 
when we localize the energy between the
thresholds, the smallest one will yield the positivity of the commutator
and the largest one will bring more compactness. 
The technical problem
comes from the fact that the resolvent of $\Delta_{k}$, 
and therefore the spectral measure, does not
stabilize the decomposition \eqref{dp}. To have such a decomposition, 
one would have to uncouple the cusp part from the compact part. Here
some caution should be exercised, since considering the Friedrichs
extension of $\Delta_{k}$, on the  cusp and on the compact part,
would be too singular  a perturbation, even though it is enough for
the study of the essential spectrum,  see \cite[Lemma C.1]{GMo1}.  
We uncouple only the low energy part.   

We recall the regularity classes of the Mourre theory. We refer to   
\cite{ABG} for a more thorough discussion of these matters. Take $H$
and $A$ two self-adjoint operators acting in a Hilbert space $\Hr$. We
say $H\in \cC^k(A)$ if $t\mapsto e^{-itA}(H+i)^{-1} e^{itA}$ is
strongly $\cC^k$ in $\Bc(\Hr)$. Moreover, if $e^{itA}\Dc(H)\subset
\Dc(H)$, this is equivalent to the fact that $H\in \cC^k\big(A; \Dc(H),
\Dc(H)^*\big)$, i.e.\ the function $t\mapsto e^{-itA}H e^{itA}$ is
strongly $\cC^k$ in $\Bc\big(\Dc(H), \Dc(H)^*\big)$. Recall that we use the
Riesz isomorphism to identify $\Hr$ with $\Hr^*$. We now give the main result 
of this section.

\begin{theorem}\label{t:mourre_0}
Let  $R\geq 1$ and $p\leq 1$. Given an interval $\cJ$ which does not
contain $\kappa(p)$, see \eqref{e:kappa},
let $c_\cJ < d\big(\inf(\cJ), \{c\in \kappa(p), c \leq
\inf(\cJ)\}\big)$. We have that $e^{itS_{R}} \Dc(\Delta_{k})\subset
\Dc(\Delta_{k})$ and $\Delta_{k}\in\cC^{2}\big(S_{R};
\Dc(\Delta_{k}), \Hr\big)$. Moreover, there exist $\varepsilon_R> 0$ and a compact
operator $K_R$ such that the inequality
\begin{equation}\label{e:mourre_0}
E_\cJ(\Delta_{k}) [\Delta_{k},iS_{R}] E_\cJ(\Delta_{k}) \geq (4 c_\cJ
 - \varepsilon_R)
E_\cJ(\Delta_{k})+ K_R \end{equation} 
holds in the sense of forms, and such that $\varepsilon_R$ tends to $0$
as $R$ goes to infinity.
\end{theorem}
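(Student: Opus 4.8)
The plan is to establish the Mourre estimate \eqref{e:mourre_0} in three stages: first the regularity statements, then the commutator computation on the cusp end via the diagonalization of Section \ref{s:diago}, and finally the extraction of positivity by localizing the energy between the (possibly two) thresholds. For the regularity, I would invoke Lemma \ref{l:sa}: since $\Dc(L^s)\subset\Dc(|S_R|^s)$ and $L^{-2}S_R^2$ preserves the scale of domains $\Dc(\Delta_k^s)$, the standard criteria of \cite{ABG} give $e^{itS_R}\Dc(\Delta_k)\subset\Dc(\Delta_k)$ and $\Delta_k\in\cC^2(S_R;\Dc(\Delta_k),\Hr)$; this part follows the pattern of \cite{GMo1}[Section 5.3] essentially verbatim, using that the commutators $[\Delta_k,iS_R]$ and $[[\Delta_k,iS_R],iS_R]$ extend to bounded operators $\Dc(\Delta_k)\to\Hr$ thanks to the cut-off functions $\tilde\cchi$ in \eqref{e:SR} and the mapping properties proved in Lemma \ref{l:sa}.

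For the commutator, I would first reduce to the cusp: because $S_R$ is supported in $X_0'$ and built from $P_0$, the operator $[\Delta_k,iS_R]$ is supported there and, modulo compacts, I may replace $\Delta_k$ by $\Delta_{k,0}$ in the diagonalized picture $\Hr_0$ and compute. On the low-energy sectors $\Hr_{{\rm l}_i}$, after conjugation $\Delta_k^{{\rm l}_i}$ becomes $-\partial_r^2+V_{p_i}(r)$ with $V_{p_i}$ as in \eqref{e:H_0}, and $S_R$ restricts to (a localized version of) $\Phi_R(-i\partial_r)\xi+\xi\Phi_R(-i\partial_r)$. A direct computation gives, on the region where $\tilde\cchi=1$, $\xi(r)=r$ and $\Phi_R(t)=t$ on the relevant energy range, the identity $[-\partial_r^2,iS_{R,0}]=4(-\partial_r^2)+(\text{lower order})$, hence $[-\partial_r^2+V_{p_i},iS_{R,0}] = 4(-\partial_r^2) - \text{const}\cdot V_{p_i} \cdot(\text{factor}) + \text{compact}$; since $-\partial_r^2 = \Delta_k^{{\rm l}_i} - V_{p_i}$ on this sector, this reads $4\Delta_k^{{\rm l}_i} - 4c_i^2(x^{2-2p}\text{ or }1) + (\text{compact})$ — the contribution of $V_{p_i}$ for $p<1$ being relatively $\Delta$-compact because $a_i/r^2\to0$. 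On the high-energy sector $\Hr_{\rm h}$, $S_R$ acts by $0$, so there is no contribution there; the coupling terms between low and high energy, which do not vanish because the resolvent does not respect \eqref{dp}, are handled by the uncoupling perturbation $R_p^*R_p$ introduced in the proof of Proposition \ref{p:free} and contribute only compact terms plus an error tending to $0$ with $R$, which is absorbed into $\varepsilon_R$.

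The main obstacle — and the reason the proof is more delicate than in \cite{GMo1} — is the extraction of positivity in the presence of two thresholds $c_0^2, c_1^2$. Sandwiching with $E_\cJ(\Delta_k)$ and writing, via the spectral theorem on each sector, $\Delta_k^{{\rm l}_i} E_\cJ \geq \inf(\cJ)E_\cJ$ modulo terms supported near the bottom of that sector's spectrum, I get $E_\cJ[\Delta_k,iS_R]E_\cJ \geq 4 E_\cJ(\Delta_k - c^2)E_\cJ + K_R - \varepsilon_R E_\cJ$ where $c^2$ is the \emph{largest} threshold $\leq\inf(\cJ)$; the point is that for a sector whose threshold exceeds $\inf(\cJ)$ the energy localization $E_\cJ$ together with the strict positivity of $-\partial_r^2$ forces that sector's contribution to be compact (this is the ``$3$-body'' mechanism alluded to in the text: the larger threshold brings extra compactness). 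One then uses $d(\inf(\cJ),\{c\in\kappa(p): c\leq\inf(\cJ)\}) > c_\cJ$ to bound $4(\Delta_k-c^2)E_\cJ \geq 4 c_\cJ E_\cJ$ on the relevant sectors, collect all the compact remainders into $K_R$ and all the $O(1/R)$ errors from the difference between $\Phi_R$ and the exact generator of dilations, from $\xi$ versus $r$, and from the uncoupling perturbation, into $\varepsilon_R$. A careful bookkeeping of which sector dominates and why the subdominant sector is compact when localized in $\cJ$ is where the real work lies; everything else is a routine commutator calculation in the $r$-variable together with the compactness of $\Delta_k^{\rm h}$'s resolvent from Proposition \ref{p:free}.
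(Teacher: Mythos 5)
The overall architecture you lay out is right — reduce to the cusp, compute the commutator in the $r$-variable, treat the two thresholds separately and let the larger one give compactness — and the commutator calculation you sketch matches \eqref{e:commu5}--\eqref{e:commu6}. The regularity part is also reasonable in spirit: the paper's Lemmata \ref{l:regu0} and \ref{l:regu} indeed hinge on Lemma \ref{l:commu} (boundedness of the first two commutators) plus the invariance-of-domain criteria from \cite{GMo1} and \cite{GG0}.

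But there is a genuine gap in the decoupling step, and it matters. You propose to handle the fact that the resolvent of $\Delta_k$ does not stabilize the decomposition \eqref{dp} by using the perturbation $R_p^*R_p$ from the proof of Proposition \ref{p:free}. That perturbation cannot serve this purpose: $\Delta_k+R_p^*R_p$ is constructed so as to be \emph{fully elliptic}, hence has purely discrete spectrum, while $\Delta_k$ has non-empty essential spectrum by Proposition \ref{p:thema}. Two self-adjoint operators whose resolvents differ by a compact operator have the same essential spectrum, so $(\Delta_k+R_p^*R_p+1)^{-1}-(\Delta_k+1)^{-1}$ is \emph{not} compact, and consequently $\theta(\Delta_k)-\theta(\Delta_k+R_p^*R_p)$ is not compact for $\theta\in\cC_c$. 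Your plan to transfer the functional calculus from $\Delta_k$ to the decoupled picture modulo compact-plus-$O(1/R)$ errors therefore collapses at this point. Relatedly, the step ``via the spectral theorem on each sector, $\Delta_k^{{\rm l}_i}E_\cJ\geq\inf(\cJ)E_\cJ$ modulo terms near the bottom'' silently uses that $E_\cJ(\Delta_k)$ respects the sector decomposition, which is exactly what fails.

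The device the paper actually uses is the Dirichlet-type operator $\dot\Delta_{k}$ of \eqref{e:tildeH_0}: the Friedrichs extension of $\Delta_k$ over forms whose low-energy part $P_0 f$ vanishes at $x=\varepsilon$. Because $P_0$ has finite rank, $\Delta_k|_{\Dr_{\rm l}}$ has finite deficiency indices, and the Krein formula plus Stone--Weierstrass give precisely the compactness you need: $\theta(\Delta_k)-\theta(\dot\Delta_k)\in\Kc(\Hr)$ for all $\theta\in\cC_c(\R)$, see \eqref{e:tildeH_02}. And crucially, $\cU\theta(\dot\Delta_k)\cU^{-1}$ does stabilize the low-energy decomposition, so one can pass to the half-line operators $H_i^F=-\partial_r^2+V_{p_i}$ sector by sector, use the two-case argument (support of $\theta(H_i^F)$ below or above $V_{p_i}(\infty)$) to extract positivity or compactness, and re-insert $E_\cJ(\Delta_k)$ at the end via \eqref{e:tildeH_02}. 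A finite-rank boundary-condition change — not the full elliptic regularization $R_p^*R_p$ — is the key idea your proposal is missing.

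Two smaller remarks. First, $\Phi_R$ is a compactly-supported cutoff, not the identity on the relevant energy range, so ``$[-\partial_r^2,iS_{R,0}]=4(-\partial_r^2)+\text{lower order}$'' only holds up to the operator $T_R=\partial_r(\partial_r+i\Phi_R)$, and one must verify that $\widetilde\mu_0 T_R(1\otimes P_0)\widetilde\mu_0\to 0$ in $\Bc(\Dc(\Delta_{k,0}),\Dc(\Delta_{k,0})^*)$ as $R\to\infty$, which is what produces $\varepsilon_R$. Second, there is still a coupling between $\Hr_{\rm h}$ and the compact part even after the boundary condition; the paper disposes of the $P_0^\perp$-terms using that the Friedrichs extension $\Delta_{k,0}^F P_0^\perp$ has compact resolvent (Proposition \ref{p:free}) plus Rellich--Kondrakov, which you should not fold silently into ``$S_R$ acts by $0$ on the high energy sector.''
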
 

We now go in a series of Lemmata and prove this theorem at the end of
the section. 

\begin{lemma}\label{l:commu}
The commutators $[\Delta_{k},iS_{R}]$ and
$\big[[\Delta_{k},iS_{R}],iS_{R}\big]$, taken in the form sense on
$\cC^\infty_c(X, \Lambda^{k}X)$, extend to bounded operators in $\Hr$. 
\end{lemma}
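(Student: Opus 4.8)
The plan is to push the whole computation onto the cusp, where $\Delta_k$ is completely explicit. Since $S_R=\cU^{-1}S_{R,0}\,\cU\,\widetilde\mu$ is supported in $X_0'$, annihilates the high energy forms $\Hr_{\rm h}$, and --- because of the cut-offs $\tilde\cchi$ inside $S_{R,0}$ --- acts only in the region $r\geq 2c$, we may choose $\widetilde\mu$ so that $\supp(d\widetilde\mu)$ is disjoint from $\supp(\tilde\cchi)$. A short manipulation then shows that $[\Delta_k,iS_R]$, as a sesquilinear form on $\cC^\infty_c(X,\Lambda^kX)$, coincides with $\cU^{-1}[\Delta_{k,0},iS_{R,0}]\,\cU\,\widetilde\mu$; the discrepancy term $\cU^{-1}S_{R,0}\,\cU\,[\widetilde\mu,\Delta_k]$ vanishes because $[\widetilde\mu,\Delta_k]$ is supported on $\supp(d\widetilde\mu)$ while $S_{R,0}$ is flanked by $\tilde\cchi$, and similarly for the double commutator. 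Hence it suffices to bound $[\Delta_{k,0},iS_{R,0}]$ and $[[\Delta_{k,0},iS_{R,0}],iS_{R,0}]$ as operators on $\cC^\infty_c(X_0',\Lambda^kX)$; the bound on $L^2(X,\Lambda^kX)$ is then recovered through $\cU$ and $\widetilde\mu$ exactly as in the proof of Lemma \ref{l:sa}.

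By Proposition \ref{p:free}, equivalently \eqref{e:H_0}, both $\Delta_{k,0}$ and $S_{R,0}$ respect the splitting \eqref{dp}. On the high energy forms $S_{R,0}$ vanishes, and since $P_0$ restricts to the identity on $\ker(\Delta^M_k)$ and on $\ker(\Delta^M_{k-1})$, the operator $S_{R,0}$ acts on each low energy block $\Hr_{{\rm l}_0}$, $\Hr_{{\rm l}_1}$ as $S\otimes\mathrm{id}_M$, where $S:=\tilde\cchi\big(\Phi_R(-i\partial_r)\,\xi+\xi\,\Phi_R(-i\partial_r)\big)\tilde\cchi$ is a one-dimensional operator on $L^2\big((c,\infty)\big)$. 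Because $P_0^\perp P_0=0$, the high energy block $\Delta_{k,0}^{\rm h}\otimes P_0^\perp$ contributes nothing to either commutator. Thus, writing $\Delta_{k,0}^{{\rm l}_i}=(-\partial_r^2+V_{p_i})\otimes\mathrm{id}_M$ as in \eqref{e:H_0} --- so $V_{p_i}$ is the constant $c_i^2$ when $p=1$ and the smooth function $a_i/r^2$ when $p<1$ --- everything reduces, for $i=0,1$, to the boundedness on $\cC^\infty_c\big((c,\infty)\big)$ of $[-\partial_r^2+V_{p_i},S]$ and $\big[[-\partial_r^2+V_{p_i},S],S\big]$.

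Here the compact support of $\Phi$ is decisive: $\Phi_R(-i\partial_r)$ and all the operators $\partial_r^j\Phi_R(-i\partial_r)$, $\Phi_R^{(j)}(-i\partial_r)$ are bounded Fourier multipliers on $L^2(\R)$. Rewriting $S$ by the identity \eqref{e:reecrit}, the only unbounded multiplication operator occurring is $r$ (equivalently $\xi$), and it always appears as $r\,\Phi_R(-i\partial_r)$ flanked by the bounded cut-offs $\tilde\cchi$. Commuting such a term with $-\partial_r^2$ has only three possible effects: it does nothing when it meets a Fourier multiplier; it turns $r$ into a constant via $[-\partial_r^2,r]=-2\partial_r$, leaving the bounded operator $\partial_r\Phi_R(-i\partial_r)$; or it differentiates a cut-off, producing $\tilde\cchi',\tilde\cchi''\in\cC^\infty_c(\R)$, after which any surviving factor of $r$ is absorbed into the bounded function $r\tilde\cchi'$. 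The commutators with $V_{p_i}$ are likewise bounded: zero when $p=1$, and for $p<1$ the function $a_i/r^2$ has bounded derivatives of all orders on $(c,\infty)$, so $[a_i/r^2,\Phi_R(-i\partial_r)]$ is Schur-bounded with kernel $a_i(r^{-2}-r'^{-2})\widehat{\Phi_R}(r-r')$, while the extra factor $r$ from $\xi$ is harmless since $r(r^{-2}-r'^{-2})$ stays bounded on $(c,\infty)^2$. Assembling the finitely many bounded pieces proves the statement for the single commutator, and the double commutator follows in the same way, each bracket lowering the degree of homogeneity in $r$ so that nothing unbounded survives. The one real difficulty is the bookkeeping: one must follow the unbounded factor $\xi\sim r$ through every term of both commutators and check that each bracket either annihilates it, replaces it by a constant, or pins it against a compactly supported derivative of a cut-off.
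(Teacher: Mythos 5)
Your proposal follows essentially the same route as the paper: conjugate by $\cU$ and exploit the support of $\tilde\cchi$ to pass to the cusp, drop the high-energy block, use the explicit form \eqref{e:H_0}, rewrite $S_{R,0}$ via \eqref{e:reecrit}, and reduce to one-dimensional commutators of $-\partial_r^2+V_{p_i}$ with a cut-off version of $\Phi_R(-i\partial_r)\xi+\xi\Phi_R(-i\partial_r)$, using throughout that $\Phi_R\in\cC^\infty_c$ makes $\Phi_R(-i\partial_r)$ and its derivatives bounded Fourier multipliers. This is exactly what the paper does, and your $\partial_r^2$-part and double-commutator bookkeeping match equations \eqref{e:commu5}--\eqref{e:commu6}.

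There is, however, one false step in the $V_{p_i}$-commutator for $p<1$. You claim that the extra factor of $r$ is harmless ``since $r(r^{-2}-r'^{-2})$ stays bounded on $(c,\infty)^2$.'' That function is not bounded: take $r'=c$ fixed and $r\to\infty$, then $r(r^{-2}-r'^{-2})=r^{-1}-r/c^2\to-\infty$. So the Schur test cannot be closed by that pointwise bound alone. The conclusion is nonetheless correct, but for a different reason: the kernel of $\Phi_R(-i\partial_r)$ is $\check\Phi_R(r-r')$, which is Schwartz because $\Phi_R\in\cC^\infty_c(\R)$, and its rapid off-diagonal decay dominates the (at most linear) growth of $r(r^{-2}-r'^{-2})$; near the diagonal $V_{p_i}(r)-V_{p_i}(r')=O(|r-r'|\,r^{-3})$, so multiplying by $r$ still gives a decaying kernel. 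Equivalently, and this is what the paper does in \eqref{e:commu6}, one commutes $r$ past $\Phi_R$ step by step --- each swap producing a bounded $\Phi_R^{(j)}$ --- until $r$ sits next to $V_{p_i}$ or one of its derivatives, and then $r\,V_{p_i}^{(j)}(r)=O(r^{-1-j})$ is bounded for $j\geq 1$. Your argument should be patched with one of these two mechanisms; as written, the sentence quoted above is wrong.
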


\proof Let $\varphi\in \cC^\infty_c(X, \Lambda^k X)$. Set $\widetilde
\varphi:= \cU\, \tilde \mu \varphi$ and write $\Phi_R$ instead of
$\Phi_R(-i\partial_r)$. To justify the computations in the tensor
product form in $L^2(X_0', \Lambda^k X)$ we approximate $\widetilde
\varphi$ like in Lemma \ref{l:sa}.  

For instance, for the commutator  $[\Delta_{k}, S_{R}]$, we use
the transformation $\cU$. We show that
$|\langle \widetilde\varphi, [\Delta_{k,0}, S_{R,0}]
\widetilde\varphi\rangle|\leq c\|\widetilde \varphi\|^2$. Thanks to
the support of the commutator, we can take off  $\tilde\mu$ and
infer $|\langle \varphi, [\Delta_{k}, S_{R}] \varphi\rangle|\leq
c'\|\varphi\|^2$. To lead the computation, we use the expression
obtained in \eqref{e:H_0}. The high energy part plays no r\^ole. 
We drop $P_0$ for convenience and it is enough to compute in the form sense in
$\cC^\infty_c(\R)$.

We first deal with the part in $\partial_r$. We now rewrite $S_{R,0}$
like in \eqref{e:reecrit}.   By commuting till all the $\partial_r$
are next to a $\Phi_R$, we get  
\begin{equation}\label{e:commu5}\begin{split}
[\partial_r^2, \tilde\cchi \Phi_R \xi +\xi
\Phi_R \tilde\cchi]&= 
[ \partial_r^2, \tilde\cchi \big(2r \Phi_R +\Phi_R' + g \Phi_R+ \Phi_R g
\big)\tilde\cchi]= 4 \partial_r\Phi_R + \mbox{bounded}
\end{split}\end{equation}
In the same way, we obtain
$\big[[\partial_r^2, \tilde\cchi \Phi_R \xi +\xi
\Phi_R \tilde\cchi], \tilde\cchi \Phi_R \xi +\xi
\Phi_R \tilde\cchi\big]=  8 \partial_r\Phi_R + \mbox{bounded}$.

For $p<1$, the potential part $V_{p_i}$ arises. We extend it 
smoothly in $\R$ and with compact support in $\R^{-}$. 
We treat its first commutator: 
\begin{equation}\label{e:commu6}\begin{split}
[V_{p_i}, \tilde\cchi \Phi_R \xi +\xi \Phi_R\tilde\cchi]
&=2\tilde\cchi[V_{p_i},\Phi_R]r\tilde\cchi+\mbox{bounded.}
\end{split}\end{equation}
Now develop the commutator and commute until $r$ touches $V_{p_i}$ to conclude. 

Consider finally the second commutator of $V_{p_i}$. We treat only the most
singular part
$\big[[V_{p_i}, \Phi_R]r, \Phi'_R r\big]
= \big[[V_{p_i}, \Phi_R], \Phi_R \big]r^2 + i
\Phi_R [\Phi_R', V_{p_i}]r + [V_{p_i}, \Phi_R] \Phi_R'' -i [V_{p_i},
\Phi_R]r \Phi_R'$. As above commute till $r$ touches $V_{p_i}$ so as
to get the boundedness. 
\qed

As pointed out in \cite{ggm}, the $\cC^1$ assumption of regularity is
essential in the Mourre theory and  should be checked carefully.
See \cite{Haefner} for a different approach. 
\begin{lemma}\label{l:regu0}
For $R\geq 1$, one has $\Delta_{k}\in\cC^1(S_{R})$ and 
$e^{itS_{R}} \Dc(\Delta_{k}) \subset\Dc(\Delta_{k})$. 
\end{lemma}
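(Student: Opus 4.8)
The plan is to establish the two assertions in the standard Mourre-theoretic order: first the invariance of the domain under the unitary group generated by $S_R$, and then the $\cC^1$ regularity $\Delta_k\in\cC^1(S_R)$, which together with the boundedness of $[\Delta_k,iS_R]$ from Lemma \ref{l:commu} is essentially a bookkeeping matter. The crucial subtlety — emphasized in the remark just before the statement, citing \cite{ggm} — is that $\cC^1(A)$ regularity is \emph{not} automatic from formal boundedness of the commutator; one must verify it honestly, e.g.\ via the criterion that $H\in\cC^1(A)$ with $e^{itA}\Dc(H)\subset\Dc(H)$ is equivalent to: $e^{itA}$ preserves $\Dc(H)$, and the commutator form $[H,iA]$, a priori defined on $\Dc(H)\cap\Dc(A)$, extends to an element of $\Bc(\Dc(H),\Hr)$ (equivalently $\Bc(\Dc(H),\Dc(H)^*)$ with some care). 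The latter extension is exactly the content of Lemma \ref{l:commu}, so the real work is the domain invariance.

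For the invariance, I would argue as follows. Recall from \eqref{e:SRvrai} that $S_R=\cU^{-1}S_{R,0}\,\cU\,\widetilde\mu$ is supported in the cusp and, through $\cU$, acts only on the low-energy part $\Hr_{\rm l}$ via the operator $S_{R,0}$ of \eqref{e:reecrit}, which is $\widetilde\cchi(2r\Phi_R+i\Phi_R'+g\Phi_R+\Phi_R g)\widetilde\cchi\otimes P_0$ with $\Phi_R(-i\partial_r)$ a bounded Fourier multiplier (compactly supported symbol) and $r=L_0$. Since $\Dc(\Delta_k)=x^{2p}H^2_c(X,\Lambda^kX)$ by Corollary \ref{cor3.2} in the fully elliptic case, and more generally the Friedrichs domain is stable under the relevant cut-offs, the claim reduces to showing that conjugation by $e^{itS_{R,0}}$ preserves $\Dc(\Delta_{k,0})$ on the cusp. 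The key structural fact is that $[\Delta_k,iS_R]$ maps $\Dc(\Delta_k)$ into $\Hr$ boundedly — which is Lemma \ref{l:commu} — and, combined with the relative boundedness estimates $\|S_R\varphi\|\le a'\|L\varphi\|$ and $L^{-2}S_R^2\Dc(\Delta_k^s)\subset\Dc(\Delta_k^t)$ from Lemma \ref{l:sa}, one is in position to invoke a standard commutator/domain-stability criterion (e.g.\ the differential inequality argument: on $\Dc(\Delta_k)$ one controls $\tfrac{d}{dt}\|\Delta_k e^{itS_R}\varphi\|$ by $\|\Delta_k e^{itS_R}\varphi\|+\|\varphi\|$ using that $[\Delta_k,iS_R]$ is $\Delta_k$-bounded, and closes Grönwall).

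Concretely I would proceed: (1) observe $e^{itS_R}$ is a well-defined unitary group by Lemma \ref{l:sa}(1); (2) for $\varphi\in\cC^\infty_c(X,\Lambda^kX)$, which is a core for $\Delta_k$, show $t\mapsto\Delta_k e^{itS_R}\varphi$ satisfies a linear-growth a priori bound, using that $e^{itS_R}\varphi$ stays in the Schwartz-type space $\cU^{-1}(\widetilde\cchi\,\Sr(\R)\otimes\im P_0)$ noted after \eqref{e:SRvrai}, so all manipulations are justified, and that $[\Delta_k,iS_R]$ extends $\Delta_k$-boundedly by Lemma \ref{l:commu} plus Lemma \ref{l:sa}(2) controlling $S_R^2$-type remainders; (3) conclude $e^{itS_R}\Dc(\Delta_k)\subset\Dc(\Delta_k)$ for all $t$ by density and the closedness of $\Delta_k$; (4) deduce $\Delta_k\in\cC^1(S_R)$ from the general equivalence in \cite{ABG} (the function $t\mapsto e^{-itS_R}\Delta_k e^{itS_R}$ is then strongly $\cC^1$ into $\Bc(\Dc(\Delta_k),\Dc(\Delta_k)^*)$, its derivative being the bounded extension of $[\Delta_k,iS_R]$).

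The main obstacle is step (2)–(3): because the resolvent of $\Delta_k$ does \emph{not} stabilize the decomposition \eqref{dp} (as stressed in the text), one cannot simply reduce everything to the low-energy channel where $S_{R,0}$ is explicit; the high-energy component $\Delta_{k,0}^{\rm h}\otimes P_0^\perp$ couples to the rest through the compact part of $X$, and the commutator $[\Delta_k,iS_R]$ must be handled as a genuinely global object on $X$. The saving grace is that $S_R$ itself is supported in the cusp and acts trivially on $\Hr_{\rm h}$, so the commutator is localized there; the delicate point is matching the cusp computation (valid in the tensor-product picture after $\cU$, where $\cU$ is only a partial isometry) back to the global self-adjoint $\Delta_k$ on $X$, which is why one approximates $\widetilde\varphi$ by split tensors $\widetilde\varphi_\R\otimes(\widetilde\varphi_M+dr\wedge\widetilde\varphi_M')$ exactly as in Lemma \ref{l:sa} and Lemma \ref{l:commu}. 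I expect the proof in the paper to be short precisely because Lemmata \ref{l:sa} and \ref{l:commu} have already done this heavy lifting.
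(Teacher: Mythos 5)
Your plan reverses the logical order used in the paper, and this reversal is where the gap lies. The paper establishes $\Delta_k\in\cC^1(S_R)$ \emph{first}, by verifying the cut-off criterion of \cite[Lemma A.2]{GMo1} with $\cchi_n(r)=\cchi(r/n)$: one checks that $\sup_n\|\cchi_n\|_{\Dc(\Delta_k)}<\infty$, that $\cC^\infty_c(X,\Lambda^kX)$ is a core for $S_R$ (Lemma \ref{l:sa}), that $[\Delta_k,iS_R]$ extends boundedly (Lemma \ref{l:commu}), and, as the one non-trivial point, that $iS_{R,0}[\Delta_{k,0},\cchi_n]\tilde\phi\to 0$ strongly because $\supp(\cchi_n')\subset[n,2n]$ and $\xi\cchi_n^{(k)}\to 0$ on $L^2(\R^+)$. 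Only \emph{after} $\cC^1$ is obtained does the paper invoke $[\Delta_k,iS_R]\in\Bc(\Dc(\Delta_k),\Hr)$ together with \cite[Lemma 2]{GG0} to conclude $e^{itS_R}\Dc(\Delta_k)\subset\Dc(\Delta_k)$. This avoids the unitary group $e^{itS_R}$ entirely until the last line.

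Your approach tries to do the domain invariance first by a Gr\"onwall estimate on $t\mapsto\|\Delta_k e^{itS_R}\varphi\|$, then deduce $\cC^1$. Two concrete issues. First, to differentiate $\|\Delta_k e^{itS_R}\varphi\|$ and to kill the principal term $2\Re\langle\Delta_k u,iS_R\Delta_k u\rangle$ one must already know that $u(t)=e^{itS_R}\varphi\in\Dc(\Delta_k)$ and that $\Delta_k u(t)\in\Dc(S_R)$ for $t$ in an interval — precisely what you are trying to prove. This circularity is standard, and it is exactly the reason why \cite[Lemma 2]{GG0} \emph{assumes} $\cC^1(A)$: that hypothesis is what licenses the regularization needed to make Gr\"onwall rigorous. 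Second, your proposed escape hatch — that $e^{itS_R}\varphi$ stays in the Schwartz-type space $\cU^{-1}(\tilde\cchi\,\Sr(\R)\otimes\im P_0)$ — misreads the remark after \eqref{e:SRvrai}: the paper asserts only that the \emph{operator} $S_R$ maps $\cC^\infty_c(X,\Lambda^kX)$ into that space, not that the unitary \emph{group} $e^{itS_R}$ preserves it. Showing that $e^{itS_R}$ preserves such a weighted Schwartz space would itself require a nontrivial estimate on the iterated powers $S_R^n\varphi$ (or a direct analysis of the flow generated by $\tilde\cchi(2r\Phi_R+i\Phi_R'+\cdots)\tilde\cchi$), which is nowhere established and is not needed in the paper's route. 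You correctly identify the roles of Lemmata \ref{l:sa} and \ref{l:commu} as the heavy lifting, and the final deduction of $\cC^1$ from domain invariance via \cite{ABG} would be fine if the invariance were in hand — but as written, the invariance step has a real gap.
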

\proof
We check the hypothesis of Lemma \cite{GMo1}[Lemma A.2] so as to get
the $\cC^1$ property. Let 
$\cchi_n(r):=\cchi(r/n)$ and $\Dr:=\cC^\infty_c(X, \Lambda^k X)$. Since
$\|\cchi_n'\|_\infty$ and $\|\cchi_n''\|_\infty$ are uniformly bounded, we get 
that $\sup_n\|\cchi_n\|_{\Dc(H)}$ is finite. By lemma \ref{l:sa},
$\Dr$ is a core for  
$S_{R}$. Assumption (1) is obvious, assumption (2) holds since $(1-\cchi_n)$ has
support in $[2n,\infty)$ and assumption (3) follows from the fact that
$H$ is elliptic, so the resolvent of $\Delta_{k}$ sends $\Dr$ into
$\cC^\infty(X, \Lambda^k X)$. The point (A.6) follows from Lemma
\ref{l:commu}. It remains to show (A.5). Let  
$\phi\in\cC^\infty(X, \Lambda^k X)\cap\Dc(\Delta_{k})$ and set
$\tilde \phi:= \cU\, \tilde \mu \phi$. 
We get $iS_{R,0}[\Delta_{k,0},\cchi_n]\tilde\phi=
\tilde\cchi(2\Phi_R(\partial_r) + [\xi, \Phi_R(\partial_r)]
\xi^{-1})\tilde \cchi P_0 \big(2\xi\cchi_n'\partial_r \tilde\phi +
\xi\cchi_n'' \tilde\phi\big)$. Both terms are tending to $0$ because 
$\supp(\cchi_n')\subset[n,2n]$ and $\xi \cchi_n^{(k)}$ tends
strongly to $0$ on $L^2(\R^+)$ for $k\geq 1$. Note one may remove
$\widetilde \mu$ because of the support. This shows $\Delta_k\in\cC^1$.

Since $[\Delta_{k},
iS_{R}]\in\Bc\big(\Dc(\Delta_k),\Hr\big)$, \cite[Lemma 2]{GG0} concludes 
$e^{itS_R}\Dc(\Delta_k)\subset\Dc(\Delta_k)$. \qed  

\begin{lemma}\label{l:regu}
Let $R\geq 1$. Then $\Delta_{k}$ belongs to $\cC^2\big(S_{R};
\Dc(\Delta_k),\Hr\big)$ for  $p\leq 1$. 
\end{lemma}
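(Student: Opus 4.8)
The plan is to deduce the $\cC^2$ regularity from the $\cC^1$ property already established in Lemma \ref{l:regu0} together with the boundedness of the iterated commutator from Lemma \ref{l:commu}. Recall the general principle in Mourre theory (see \cite{ABG}): if $H\in\cC^1(A;\Dc(H),\Hr)$ and the commutator $[[H,iA],iA]$, taken a priori in the form sense on a core, extends to a bounded operator from $\Dc(H)$ to $\Dc(H)^*$, then $H\in\cC^2(A;\Dc(H),\Hr)$. So the task is to verify the hypotheses of that criterion with $H=\Delta_k$ and $A=S_R$.

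First I would recall that by Lemma \ref{l:regu0} we already have $e^{itS_R}\Dc(\Delta_k)\subset\Dc(\Delta_k)$ and $\Delta_k\in\cC^1(S_R;\Dc(\Delta_k),\Hr)$, so $[\Delta_k,iS_R]$ is a bounded operator from $\Dc(\Delta_k)$ to $\Hr$ (hence a fortiori to $\Dc(\Delta_k)^*$). Next, Lemma \ref{l:commu} shows that $\big[[\Delta_k,iS_R],iS_R\big]$, computed in the form sense on the core $\cC^\infty_c(X,\Lambda^k X)$, extends to a bounded operator on $\Hr$; in particular it extends boundedly from $\Dc(\Delta_k)$ to $\Hr\subset\Dc(\Delta_k)^*$. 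To apply the abstract criterion cleanly one needs to know that $\cC^\infty_c(X,\Lambda^k X)$ is a core for $S_R$ and for $\Delta_k$ simultaneously, and that it is left invariant (or at least that it is dense in the relevant graph topologies) under the objects involved; the first is Lemma \ref{l:sa}(1) and the essential self-adjointness statement in Corollary \ref{cor3.2}, and a routine density argument handles the rest, exactly as in the $\cC^1$ case. One then invokes \cite[Theorem 6.3.4]{ABG} (or the analogue used for the $\cC^1$ step, e.g. \cite[Lemma A.2]{GMo1} iterated once more) to upgrade $\cC^1$ to $\cC^2$.

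The restriction $p\leq 1$ enters exactly where it does in Lemma \ref{l:commu}: for $p<1$ the potential terms $V_{p_i}(r)=a_i/r^2$ contribute to the commutators, and their iterated commutators with $S_R$ remain bounded only because of the cut-offs $\tilde\cchi$ supported away from $r=c$, so that all negative powers of $r$ that appear are bounded on the support; for $p=1$ the potentials are constants and drop out. In the incomplete case $p>1$ there is no essential spectrum and, as remarked at the start of Section \ref{s:Mourre}, one takes the conjugate operator to be $0$, so the statement is vacuous there. The main obstacle is not conceptual but bookkeeping: one must make sure that the form-sense identities of Lemma \ref{l:commu} genuinely give bounded operators \emph{between the correct spaces} $\Dc(\Delta_k)\to\Dc(\Delta_k)^*$ rather than merely on a dense subspace, and that the tensor-product approximations used there (approximating $\widetilde\varphi$ by $\widetilde\varphi_\R\otimes(\widetilde\varphi_M+dr\wedge\widetilde\varphi_M')$) are compatible with the domain of $\Delta_k$; this is handled by the mapping properties of cusp operators and the explicit form \eqref{e:H_0}, exactly as in the proof of Lemma \ref{l:regu0}.
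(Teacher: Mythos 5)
Your proposal is correct and follows the paper's own argument: the paper's proof is exactly the two-line observation that Lemma \ref{l:regu0} gives the invariance of the domain and Lemma \ref{l:commu} gives the boundedness of the first and second commutators, from which the $\cC^2$ regularity follows by the standard criterion. Your extra remarks on the role of $p\leq 1$ and on the density/core bookkeeping are accurate but not needed beyond what is already contained in the two cited lemmata.
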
 
\proof
From Lemma \ref{l:regu0}, we have the invariance of the domain 
and from Lemma \ref{l:commu}, the two commutators of $\Delta_k$ with $S_{R}$ extend to bounded 
operators. \qed

We now introduce an intermediate operator, to be able to deal with the
range between the thresholds when both Betti numbers are non-zero. 
This situation is somewhat similar to adding an anisotropic potential
for the Euclidean Laplacian in dimension $1$.
Let $\dot\Delta_{k}$ be Friedrichs extension of $\Delta_{k}$
over 
\begin{equation}\label{e:tildeH_0} 
\Dr_{\rm l}=\{f\in \cC^\infty_c(X, \Lambda^k X), P_0f(x,m)=0, \text{ for $x=\varepsilon$ and  
for all $m\in M$}.  
\end{equation}
Here $\varepsilon$ is chosen as in Section \ref{s:diago}.
We note that $\Delta_{k}|_{\Dr_{\rm l}}$ has finite deficiency
indices since $P_0$ is of finite rank by compactness of $M$. Using the
Krein formula and the Stone-Weierstrass  theorem, we infer 
\begin{equation}\label{e:tildeH_02}
\theta(\Delta_{k}) -\theta\big(\dot\Delta_{k}\big) \in
\Kc(\Hr), \text{ for all $\theta\in\cC_c(\R,\C)$.}
\end{equation} 
The main interest of $\dot\Delta_{k}$ is that its
resolvent stabilizes the low energy part of the decomposition \eqref{dp}. 
Note the high energy part interacts with the compact part of $X$.

\begin{remark}\label{r:regu} We have
$\dot\Delta_{k}\in\cC^1(S_{R})$. In fact, because of the
support of $\tilde \cchi$,  $e^{itS_{R,0}}$ acts like
identity on the orthogonal complement of $L^2\big([c,\infty)\big)\otimes
L^2(M)$ in $\Hr_0$. Combining Lemma  \ref{l:regu} and
interpolation, we deduce that $e^{itS_{R}}$ stabilizes the
form domain of $\dot\Delta_{k}$. Then, repeating Lemma
\ref{l:commu} and using \eqref{e:tildeH_02}, we infer
$\dot\Delta_{k}\in \cC^1\big(S_{R};
\Dc(\dot\Delta_{k}^{1/2}),
\Dc(\dot\Delta_{k}^{1/2})^*\big)$. Note that $[\dot\Delta_{k},
i S_R]$ is equal to the operator $[\Delta_{k}, i S_R]$ thanks to the support of $S_R$.
\end{remark} 

We are now in position to prove the main result of this section.

\proof[Proof of Theorem \ref{t:mourre_0}] The regularity assumptions
follow from Lemmata \ref{l:regu0} and \ref{l:regu}. 
Set $\varphi\in\cC^\infty_c(X, \Lambda^k X)$. Set $\widetilde
\varphi:= \cU\, \tilde \mu \varphi$ and write $\Phi_R$ instead of
$\Phi_R(-i\partial_r)$. To justify the computations in the tensor
product form in $L^2(X_0', \Lambda^k X)$ we approximate $\widetilde
\varphi$ like in Lemma \ref{l:sa}. Because of supports, one has  $\langle \varphi, 
[\Delta_{k},iS_{R}] \varphi\rangle= \langle\widetilde \varphi,
[\Delta_{k,0},iS_{R,0}] \widetilde\varphi\rangle=\langle\widetilde \varphi,
[\dot\Delta_{k,0},iS_{R,0}] \widetilde\varphi\rangle$. 

We concentrate on the low energy part. We add \eqref{e:commu5} and 
\eqref{e:commu6} and infer  
\begin{align}\nonumber
\langle \widetilde \varphi, [\dot\Delta_{k,0},iS_{R,0}]
\widetilde\varphi\rangle =& 
\langle \widetilde \varphi, [\Delta_{k,0},iS_{R,0}]
\widetilde\varphi\rangle
\\\label{e:Mstep1'}
=&\sum_{i=0,1}\langle \widetilde \varphi, 4( -\partial_r^2
+V_{p_i}-V_{p_i}(\infty)+T_R)\otimes
P_{\ker(\Delta^M_{k-i})}\widetilde \varphi \rangle + \langle
\varphi, K_2 \varphi \rangle  
\end{align}
for a certain $K_2=K_2(R)\in\Kc(\Dc(\Delta_{k}),\Dc(\Delta_{k})^*)$ 
and with $T_R:= \partial_r(\partial_r+i\Phi_R)$. The compactness of
$K_2$ follows by noticing that $L_0^{-1}\in
\Kc(\Dc(\Delta_{k,0}),\Hr_0)$ and that  $L_0[V_{p_i}, i S_{R,0}]\in
\Bc(\Hr_0,\Hr_0)$.  
We now control the size of $T_R$. We have
\begin{equation}\label{e:vareR}
\|\widetilde\mu_0 T_R
(1\otimes P_0)\widetilde\mu_0 \|_{\Bc(\Dc(\Delta_{k,0}),
  \Dc(\Delta_{k,0})^*)} \text{ tends 
   to $0$ as $R$ goes to infinity.}
\end{equation} 
Indeed, $\widetilde\mu$ stabilizes $\Dc(\Delta_{k,0})$,
then $-\partial_r^2 \widetilde\mu$ belongs to
$\Bc\big(\Dc(\Delta_{k,0}), L^2(\R)\big)$ and
$(-\partial_r^2+i)^{-2}T_R$ tends to $0$ in norm by functional
calculus, as $R$ goes to infinity.  

We now extract some positivity. 
Let $\theta\in\cC^\infty_c(\R,\R)$ such that
$\theta|_\cJ=1$, such that $\kappa(p)$ is disjoint from the support of 
$\theta$ and such that the distance from $\cJ$ to the complementary of 
the support is smaller than $c_\cJ$. 
If the support is lower than $\kappa(p)$, the Mourre estimate is trivial 
because $\theta(\Delta_{k})$ is compact, by Proposition \ref{p:thema}. 
We assume then that $\supp(\theta)$ is above $\inf\big(\kappa(p)\big)$. 
Using
\eqref{e:tildeH_02}, the fact that $\cU\theta(\dot\Delta_{k})\cU^{-1}$
stabilizes the decomposition \eqref{dp}, that
$[\Delta_{k,0},iS_{R,0}]$ is bounded, we get there is a compact
operator $K_0$ in $\Hr_0$ so that:   
\begin{align*}
\tilde \mu_0
\cU^{-1}\theta(\Delta_{k})\cU [\Delta_{k,0},iS_{R,0}]\cU
\theta(\Delta_{k,0}) \cU^{-1}\tilde \mu_0 =&
\\&\hspace{-3cm}
\tilde \mu_0 P_0\,
\cU^{-1}\theta\big(\dot\Delta_{k}\big)\cU \big[\dot\Delta_{k,0},iS_{R,0}\big] \cU
\theta\big(\dot\Delta_{k}\big)\cU^{-1} P_0\tilde \mu_0 +K_0.   
\end{align*}
By ellipticity, we have $\cU\theta\big(\dot \Delta_{k} \big)
\cU^{-1}\tilde \mu P_0  \Hr_0\subset \cC^\infty_0\big((c, \infty)
\big)\otimes P_0 L^2(M, \Lambda^k M\oplus \Lambda^{k-1} M) $, i.e.
the radial part tends to 
$0$ in $c$ and at infinity. The analysis is reduced to the one of
the Friedrichs extension $H_i^F$ of $H_i=-\partial_r^2+V_{p_i}$ on
$L^2(c, \infty)$. Indeed, by stability of the decomposition
\eqref{dp}, we have $\cU\theta\big(\dot\Delta_{k}\big)\cU^{-1} (1\otimes 
P_0)\tilde \mu_0= \sum_{i=1,2} \theta(H_i^F)\otimes
P_{\ker(\Delta^M_{k-i})} \tilde \mu_0$. 

Up to $T_R$ and compact terms, the commutator of $H_i^F$ with $S_{R,0}$,
given by  \eqref{e:Mstep1'}, is $4H_i^F -V_{p_i}(\infty)$. We have two
possibilities: if the support of $\theta(H_i^F)$ is under
$V_{p_i}(\infty)$ then for all $c\in\R$, there is a compact operator
$K$ such that $\theta(H_i^F)H_i^F\theta(H_i^F) = c \theta(H_i^F) + K$, since
the spectral measure is compact. If the support is above, 
then there
is a compact $K$ such that $\theta(H_i^F)\big(H_i^F
-V_{p_i}(\infty)\big)\theta(H_i^F)\geq \inf\big(\supp(\theta)-
V_{p_i}(\infty)\big)\theta(H_i^F)+K$. Adding back $T_R$ and the compact
part, we obtain:
\begin{align*}
 \tilde \mu_0 P_0\cU \theta(\dot\Delta_{k})\cU^{-1}
[\Delta_{k,0},iS_{R,0}]\cU\theta(\dot\Delta_{k}) \cU^{-1}
P_0\tilde \mu_0 \geq& \, 4c_\cJ \tilde \mu_0 P_0\,
\cU\theta^2(\dot\Delta_{k}) \cU^{-1} P_0\tilde \mu_0
\\ & \hspace{-2cm}
+ \tilde \mu_0 P_0\,\cU\theta(\dot\Delta_{k})\cU^{-1} T_R
\cU\theta(\dot\Delta_{k})\cU^{-1}P_0\tilde \mu_0 + K_0 
\end{align*} 
We now remove the $P_0$. We add $P_0^\perp\theta\big(\dot  
\Delta_k\big)\tilde\mu_0$ on the right and on the left. 
This is a compact operator as it is equal to
\begin{eqnarray*}
(\Delta_{k,0}^F P_0^\perp+i)^{-1}(\Delta_{k,0}^F
  P_0^\perp+i)\tilde\mu_0 P_0^\perp\cU\theta\big(\dot
  \Delta_k\big)\cU^{-1} \tilde\mu_0 +
P_0^\perp\mu_0 \cU\theta\big(\dot  \Delta_k\big)\cU^{-1} \tilde\mu_0,
\end{eqnarray*}
where $\Delta_{k,0}^F P_0^\perp$ denotes the Friedrichs extension of
$\Delta_{k,0} P_0^\perp$. Its resolvent is compact by Proposition
\ref{p:free}. Hence, the first term is compact as  $\mu_0
P_0^\perp\cU\theta\big(\dot  \Delta_k\big)\cU^{-1} \tilde\mu_0$ is
with value in $\Dc\big(\Delta_{k,0}^F P_0^\perp\big)$. The compacity
of the second terms follows from Rellich-Kondrakov. 

We get rid of the term in $T_R$ and let appear $\varepsilon_R$ by
using the bound \eqref{e:vareR}.  Going back to the spectral measure
of $\Delta_{k}$ with the help of \eqref{e:tildeH_02}, we infer: 
\begin{align*}
\tilde \mu_0\,\cU \theta(\Delta_{k}) \cU^{-1}[\Delta_{k,0},iS_{R,0}]
\cU\theta(\Delta_{k})\cU^{-1} \tilde \mu_0 \geq  
4(c_\cJ-\varepsilon_R)
\tilde \mu_0\,\cU\theta^2(\Delta_{k})\cU^{-1}\tilde \mu_0  +K_0
\end{align*} 
holds true in $\Hr_0$ in the form sense. By Rellich-Kondrakov, up to 
more compactness, this implies \eqref{e:mourre_0} with $\theta$ 
instead of $E_\cJ$. To conclude, apply $E_\cJ(\Delta_k)$ on both sides.
\qed

\subsection{The spectral and scattering theory}\label{s:SRLR}
Applying directly the Mourre theory with Theorem \ref{t:mourre_0}, one
obtain the results $(1)-(4)$ of Theorem \ref{t:Ith1} with the metric
$g$. The aim of the section is to complete the result and to go into
perturbation theory so as to get the other metrics. In the earlier version of the Mourre  theory for
one self-adjoint operator $H$ having a spectral gap, one used something stronger than the
hypothesis $H\in\cC^2(A)$, where $A$ the conjugate operator. This means
$[[(H+i)^{-1}, A],A]$ extends to a bounded operator. Keeping this
hypothesis leads to  a too weak perturbation theory. In this section,
we check a weak version of the  two-commutators hypothesis. We say
$H\in \cC^{1,1}(A)$ if  
\begin{equation*}
	\int_0^1 \big\|[[(H+i)^{-1}, e^{itA}], e^{itA}]\big\|\,
\frac{dt}{t^2} <\infty.
\end{equation*}
If we have the invariance of the domain, $e^{itA}\Dc(H)\subset
\Dc(H)$, \cite{ABG}[Theorem 6.3.4.] allows to reformulate this condition
in the equivalent 
\begin{equation*}
\int_0^1 \big\|[[H, e^{itA}],
  e^{itA}]\big\|_{\Bc(\Dc(H), \Dc(H)^*)}\, \frac{dt}{t^2} <\infty
\end{equation*}
 This one is naturally named $\cC^{1,1}(A; \Dc(H), \Dc(H)^*)$. We refer to
\cite{ABG} for more properties. This is the optimal class of operators
which give a limit absorption principle for $H$ in  some optimal Besov
spaces associated to $S_{R}$. For the sake of simplicity, we will not
formulate the result with these Besov space  and keep power of
$\langle S_{R}\rangle^{s}$. Those one are replaced in the final result
by $\langle L\rangle^{s}$ freely, using Lemma \ref{l:sa}.

The proof of Theorem \ref{t:Ith1} is given in the end of the
section. The operator $\Delta_k$ belongs to
$\cC^2(S_R)$ and therefore  to $\cC^{1,1}(S_R)$. We have the
invariance of the domain, by Lemma \ref{l:regu0}.  
Take now a symmetric relatively compact perturbation
$T$ of $\Delta_{k}$ which lies in $\cC^{1,1}(S_R; \Dc(\Delta_k),
\Dc(\Delta_k)^*)$. Then a Mourre estimate 
holds for $\Delta+T$, and one gets \eqref{a)}-\eqref{c)} of Theorem
\ref{t:Ith1} by using 
Theorem \cite[Theorem 7.5.2]{ABG}. The H\"older regularity of the
resolvent, point \eqref{d)}, follows from \cite{ggm}, see references
therein. If the perturbation is short-range (see below) or local, then
one has point \eqref{e)} by using \cite[Theorem 7.6.11]{ABG}. 

We keep the notation of section \ref{s:diago}. Let 
$\partial_L$ be the closure of the operator defined by
\begin{align*}
(\partial_L f)(x, m)= x^{2-p}(\partial_x f)(x)- \alpha_k x^{1-p}f(x),&&
\text{ with } \alpha_k=(n-2k-1)p/2,
\end{align*} 
for $f\in \cC^\infty_c(X', \Lambda^k X)$. Note that $\partial_r=\cU \partial_L\cU^{-1}$. 
Consider a  symmetric differential operator $T:
\Dc(\Delta_{k})\rightarrow  
\Dc(\Delta_{k})^*$. Let $\theta_{\rm sr}$ be in $\cC_c^\infty((0,\infty))$
not identically $0$; $T$ is \emph{short-range} if
\begin{equation}\label{e:sr}
\int_1^\infty \Big\|\theta_{\rm sr}\Big(\frac{L}{r}\Big) T
\Big\|_{\Bc(\Dc(\Delta_{k}), \Dc(\Delta_{k})^*)} dr <\infty.
\end{equation} 
and \emph{long-range} if
\begin{equation}\label{e:lr}\begin{split}
\int_1^\infty  \Big\|[T, L]\theta_{\rm lr}\Big(\frac{L}{r}\Big)
\Big\|_{\Bc(\Dc(\Delta_{k}), \Dc(\Delta_{k})^*)}  
+  \Big\|\tilde\mu[T, P_0]L\theta_{\rm
lr}\Big(\frac{L}{r}\Big)\tilde\mu\Big\|_{\Bc(\Dc(\Delta_{k}),
\Dc(\Delta_{k})^*)} 
\\
+  \Big\|P_0\tilde\mu[T, \partial_L]L\theta_{\rm
lr}\Big(\frac{L}{r}\Big)\tilde\mu \Big\|_{\Bc(\Dc(\Delta_{k}),
\Dc(\Delta_{k})^*)} \frac{dr}{r} <\infty.
\end{split}\end{equation}
where $\theta_{\rm lr}$ is the characteristic function of $[1,\infty)$
in $\R$.

The first condition is evidently satisfied if there is $\varepsilon
> 0$ such that  
\begin{equation}\label{e:sr'}
	\|L^{1+\varepsilon} T\|_{\Bc(\Dc(\Delta_{k}),
\Dc(\Delta_{k})^*)} < \infty
\end{equation}
and the second one if
\begin{equation}\label{e:lr'}\begin{split}
\|L^{\varepsilon} [T,L]\|_{\Bc(\Dc(\Delta_{k}), \Dc(\Delta_{k})^*)}	
+
\|L^{1+\varepsilon}
\widetilde\mu[T,P_0]\widetilde\mu\|_{\Bc(\Dc(\Delta_{k}),
\Dc(\Delta_{k})^*)} 
\\
+ \|L^{1+\varepsilon}
\widetilde\mu[T,\partial_L]P_0\widetilde\mu\|_{\Bc(\Dc(\Delta_{k}),
\Dc(\Delta_{k})^*)}< \infty  
\end{split}\end{equation}
The condition involving $[P_0, T]$ essentially tells us 
that the non-radial part of $T$ 
is a short-range perturbation. This is why we will ask the long-range
perturbation to be radial. To show that the first class is in 
$\cC^{1,1}(S_R)$, one uses \cite[Theorem 7.5.8]{ABG}. 
The hypotheses are satisfied thanks to Lemmata \ref{l:sa} and \ref{l:L}. 
Concerning the second class, it is enough to show that
$[T,S_R]\in\cC^{0,1}(S_R)$; this follows by using \cite[Proposition
7.5.7]{ABG} (see the proof of \cite[Proposition 7.6.8]{ABG} for instance). 

Straightforwardly, we get the short and long-range perturbation of the
potential.  

\begin{lemma}\label{l:V}
Let $V\in L^\infty(X)$. If $\|L^{1+\varepsilon}V\|_\infty<\infty$ then 
the perturbation $V$ is short-range in $\Bc(\Hr)$. If $V$ is radial, 
tends to $0$ as $x\to 0$ 
and $\|L^{1+\varepsilon} x^{2-p}\partial_x V\|_\infty< \infty$,
then $V$ is a long-range perturbation in $\Bc(\Hr)$. 
\end{lemma}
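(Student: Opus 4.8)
The plan is to reduce both statements to the abstract criteria \eqref{e:sr'} and \eqref{e:lr'}, which were just shown to imply membership in the relevant regularity classes. Since $V$ is a bounded operator on $\Hr$, it extends trivially to an element of $\Bc(\Dc(\Delta_k),\Dc(\Delta_k)^*)$, and the graph norm on $\Dc(\Delta_k)$ dominates the $L^2$ norm, so all the operator norms below may be estimated in $\Bc(\Hr)$ up to a harmless constant. For the short-range claim, I would simply note that multiplication by $L^{1+\varepsilon}V$ is bounded on $\Hr$ by hypothesis, hence $\|L^{1+\varepsilon}V\|_{\Bc(\Dc(\Delta_k),\Dc(\Delta_k)^*)}<\infty$, which is exactly \eqref{e:sr'} with $T=V$; this puts $V$ in the short-range class.

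For the long-range claim, I would check the three terms in \eqref{e:lr'} one at a time with $T=V$. Since $V$ and $L$ are both multiplication operators, $[V,L]=0$, so the first term vanishes. Since $V$ is assumed radial, it commutes with $P_0$ (the projection onto harmonic forms on $M$, which acts only in the $M$ variable and is independent of the radial coordinate), so $[V,P_0]=0$ and the second term vanishes as well. The only surviving term is $\|L^{1+\varepsilon}\widetilde\mu[V,\partial_L]P_0\widetilde\mu\|_{\Bc(\Dc(\Delta_k),\Dc(\Delta_k)^*)}$. Here one computes in the cusp coordinates that $[V,\partial_L]=-(\partial_L V)\cdot$, i.e.\ multiplication by $-x^{2-p}\partial_x V$ (the zeroth-order term $\alpha_k x^{1-p}$ in $\partial_L$ drops out of the commutator). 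Thus the third term is bounded by $\|L^{1+\varepsilon}x^{2-p}\partial_x V\|_\infty$, which is finite by hypothesis; the cutoffs $\widetilde\mu$ and the projection $P_0$ only improve the bound. This verifies \eqref{e:lr'}, so $V$ is long-range.

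The one point requiring a little care — and the closest thing to an obstacle — is the commutator identity $[V,\partial_L]=-x^{2-p}\partial_x V$ together with the passage through the unitary $\cU$: one must observe that $\partial_r=\cU\partial_L\cU^{-1}$ (as recorded just above the statement), that multiplication operators in the radial variable are unaffected by the $M$-tensor factor, and that the commutator of a multiplication operator with a first-order differential operator is again a multiplication operator, namely by (minus) the derivative of the coefficient. Everything else is a routine boundedness bookkeeping, using that multiplication by an $L^\infty$ function is bounded on $\Hr$ and that $L\geq 1$ so that lower powers of $L$ are controlled by $L^{1+\varepsilon}$.
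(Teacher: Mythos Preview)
Your proposal is correct and follows exactly the approach the paper intends: the paper's own ``proof'' is the single word ``Straightforwardly'' placed just before the lemma, and what you have written is precisely the straightforward verification of criteria \eqref{e:sr'} and \eqref{e:lr'} for $T=V$ that the authors had in mind. The only minor remark is that the hypothesis ``$V$ tends to $0$ as $x\to 0$'' is not actually needed to verify \eqref{e:lr'} (as your argument shows); it is included in the lemma because it is used later, in the proof of Theorem~\ref{t:Ith1}, to ensure that $V_{\rm lr}$ is a relatively compact perturbation.
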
 

In this section we will prove  the points \eqref{b)} -- \eqref{e)} of Theorem
\ref{t:Ith1} for the perturbed metric $\tilde g:=(1+\rho_{\rm sr}+\rho_{\rm
lr})g_p$, where $\rho_{\rm sr}$ and $\rho_{\rm lr}$ are in
$\cC^\infty(X)$,  $\inf_{x\in X}
\big(1+\rho_{\rm sr}(x)+\rho_{\rm lr}(x)\big)>1$, $\rho_{\rm lr}$ is radial, where $g_p$ is given by
\eqref{gp'} and where we assume
\begin{equation}\label{e:SRLR}
\begin{split}
\|L^{1+\varepsilon}\rho_{\rm sr}\|_\infty+\|d\rho_{\rm sr}\|_\infty +
\|\Delta\rho_{\rm sr}\|_\infty<\infty, &\\
 \|L^{\varepsilon}\rho_{\rm lr}\|_\infty+\|L^{1+\varepsilon}
x^{2-p}\partial_x \rho_{\rm lr}\|_\infty + \|\Delta\rho_{\rm lr}\|_\infty<\infty.&
\end{split}
\end{equation} 
Lemma \ref{l:uni} shows  the Laplacian associated to the metric $\rho_{\rm sr}$ ($\rho_{\rm lr}$ resp.)
is a short-range perturbation (long-range resp.) of the one given by the metric $g$ in some precise sense.
\begin{remark} For the sake of simplicity, we treat 
only conformal perturbations of the metric. However,
there is no obstruction in our method to 
treat more general perturbations like in \cite{FH}. The only fact we really use
is that the domain of the perturbed Laplacian is the same as that of the free Laplacian. In the general
case we need to impose an extra condition \eqref{e:asympto2}. In the case
of a conformal change, this follows automatically by the condition \eqref{e:SRLR} on the derivatives. 
\end{remark} 

We define the Sobolev spaces associated to the Laplacian given by the metric $g$. Set $\Hr^s:=\Dc\big((1+\Delta)^{s/2}\big) = \Dc\big(|d+\delta|^s\big)$ for 
$s\geq 0$, and $\Hr^s:=(\Hr^{-s})^*$ for $s$ negative (here we
use the Riesz isomorphism between $\Hr:=\Hr^0$ and $\Hr^*$). We will use $\Hr^s$ with $s\in[-2,2]$;
these spaces are equal to the ones obtained using the metric $\tilde g$ since the norms are equivalent.

To analyze the perturbation of the metric, unlike in
\cite{GMo1}, we work simultaneously with forms of all degrees. 
Since $L$ stabilizes the space
of $k$-forms, one gets $\cC^{1,1}$ regularity for $\Delta_k$ by
showing it for $\Delta$. 

We now proceed in two steps and set $g_{\rm lr}:=(1+\rho_{\rm lr})g$. 
We define $U_{\rm lr}$, $U_{\rm sr}$ being the operators of multiplication by 
$(1+\rho_{\rm lr})^{(n-2k)/4}$ and $\big(1+\rho_{\rm sr}/(1+\rho_{\rm lr})\big)^{(n-2k)/4}$, respectively, when restricted to $k$-forms. Remark that $\rho_{\rm sr}/(1+\rho_{\rm lr})$ is short range in the 
sense of \eqref{e:SRLR}. Now note that the transformation 
\begin{align*}
U_{\rm lr}:& L^2(X, \Lambda^* X, g) \longrightarrow L^2(X, \Lambda^* X, g_{\rm lr}):=\Hr_{\rm lr}
\\
U_{\rm sr}:& L^2(X, \Lambda^* X, g_{\rm lr}) \longrightarrow L^2(X, \Lambda^* X, \tilde g):=\Hr_{\tilde g}
\end{align*}
are unitary. Set also $U_{\tilde g}:= U_{\rm sr}U_{\rm lr}$.

\begin{lemma}\label{l:U}
The operators $L^{1+\varepsilon}(U_{\rm sr}-1)$, $L^{\varepsilon}(U_{\rm lr}-1)$
and $L^{1+ \varepsilon}[U_{\rm lr}, \partial_L]$ extend to bounded operators in $\Hr$.
\end{lemma}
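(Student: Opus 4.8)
\textbf{Proof plan for Lemma \ref{l:U}.}
The plan is to reduce everything to an explicit computation on the cusp $X'=(0,\varepsilon)\times M$, since $U_{\rm sr}-1$, $U_{\rm lr}-1$ and $[U_{\rm lr},\partial_L]$ are all supported there (outside a compact set $\rho_{\rm sr}=\rho_{\rm lr}=0$). First I would recall that $U_{\rm lr}$ is multiplication by $f_{\rm lr}:=(1+\rho_{\rm lr})^{(n-2k)/4}$ on $k$-forms and $U_{\rm sr}$ is multiplication by $f_{\rm sr}:=\big(1+\rho_{\rm sr}/(1+\rho_{\rm lr})\big)^{(n-2k)/4}$; both functions are smooth, bounded below by a positive constant and bounded above by \eqref{e:SRLR}. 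Writing $U_{\rm sr}-1$ and $U_{\rm lr}-1$ as multiplication by $f_{\rm sr}-1$ and $f_{\rm lr}-1$, a Taylor expansion of the map $t\mapsto(1+t)^{(n-2k)/4}$ near $0$ gives $|f_{\rm sr}-1|\leq C|\rho_{\rm sr}/(1+\rho_{\rm lr})|\leq C'|\rho_{\rm sr}|$ and $|f_{\rm lr}-1|\leq C|\rho_{\rm lr}|$ pointwise (the argument is bounded, so the expansion is uniform). Hence $\|L^{1+\varepsilon}(U_{\rm sr}-1)\|_{\Bc(\Hr)}\leq C'\|L^{1+\varepsilon}\rho_{\rm sr}\|_\infty<\infty$ and $\|L^{\varepsilon}(U_{\rm lr}-1)\|_{\Bc(\Hr)}\leq C\|L^{\varepsilon}\rho_{\rm lr}\|_\infty<\infty$ directly from \eqref{e:SRLR}.

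For the third operator, $[U_{\rm lr},\partial_L]$, I would note that $U_{\rm lr}$ is a multiplication operator and $\partial_L$ is the first-order operator $x^{2-p}\partial_x-\alpha_k x^{1-p}$ (acting in the $x$-direction only), so the commutator is again multiplication, namely by $-(\partial_L^{\rm scal}f_{\rm lr})=-x^{2-p}\partial_x f_{\rm lr}$, where the zeroth-order term of $\partial_L$ cancels in the commutator. By the chain rule $x^{2-p}\partial_x f_{\rm lr}=\tfrac{n-2k}{4}(1+\rho_{\rm lr})^{(n-2k)/4-1}\,x^{2-p}\partial_x\rho_{\rm lr}$, and since $1+\rho_{\rm lr}$ is bounded above and below, $|x^{2-p}\partial_x f_{\rm lr}|\leq C|x^{2-p}\partial_x\rho_{\rm lr}|$ pointwise. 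Therefore $\|L^{1+\varepsilon}[U_{\rm lr},\partial_L]\|_{\Bc(\Hr)}\leq C\|L^{1+\varepsilon}x^{2-p}\partial_x\rho_{\rm lr}\|_\infty<\infty$, again by \eqref{e:SRLR}. (Here one uses that $\partial_L$ makes sense on $\cC^\infty_c(X',\Lambda^k X)$ and one checks the identity on this core, then extends by density; since $U_{\rm lr}$ preserves this space and $L$ is a multiplication operator commuting with $U_{\rm lr}$, the estimates pass to the closures.)

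The only mildly delicate point is bookkeeping: making precise that $[U_{\rm lr},\partial_L]$, a priori defined only as a form on $\cC^\infty_c(X',\Lambda^k X)$, is genuinely the stated multiplication operator and that multiplying by $L^{1+\varepsilon}$ (an unbounded multiplication operator) still yields a \emph{bounded} operator — this is immediate once the pointwise bound $|L^{1+\varepsilon}x^{2-p}\partial_x f_{\rm lr}|\leq C\|L^{1+\varepsilon}x^{2-p}\partial_x\rho_{\rm lr}\|_\infty$ is in hand, because the product of two multiplication operators is multiplication by the product. There is no analytic obstacle; the content is entirely that the exponents in \eqref{e:SRLR} are tailored exactly so that each of the three quantities is controlled (the loss of one power of $L$ in $[U_{\rm lr},\partial_L]$ relative to $U_{\rm lr}-1$ mirrors the passage from $L^\varepsilon\rho_{\rm lr}$ to $L^{1+\varepsilon}x^{2-p}\partial_x\rho_{\rm lr}$).
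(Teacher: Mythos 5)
Your proof is correct and follows essentially the same path as the paper's (very terse) proof: reduce to pointwise estimates on the cusp using the decay hypotheses \eqref{e:SRLR}, a Lipschitz bound for $t\mapsto(1+t)^{(n-2k)/4}$, and the chain rule for the commutator term. If anything, your use of the mean-value estimate is cleaner than the paper's appeal to "the binomial formula," since the exponent $(n-2k)/4$ is generally not an integer; the observation that the zeroth-order part $-\alpha_k x^{1-p}$ of $\partial_L$ drops out of the commutator is exactly right.
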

\proof Concentrate on the short-range case. Easily, it is enough to show that,  for all non-negative
integer $\alpha$, $L^{1+\varepsilon}\big((1+\rho_{\rm sr}/(1+\rho_{\rm
  lr}))^\alpha -1\big)$ is bounded. One shows that using
\eqref{e:SRLR} and the binomial formula. For the first part of the
long-range case, we do the same. The second part is obvious using the
chain-rule and again \eqref{e:SRLR}. \qed

We gather various technicalities concerning the operator $L$. 
By taking the adjoint and as the spaces $\Hr^s$ are independent of
$\rho_{\rm sr}$ and $\rho_{\rm lr}$, one gets similar bounds for
$\delta_g, \delta_{g_{\rm lr}}$ and $\delta_{\tilde g}$.  
Note we add the metric in subscript to stress the dependence of the metric.

\begin{lemma}\label{l:L}
Let $s\in[-2,2]$ and $\alpha\geq 0$. 
\begin{enumerate}
\item The operator of multiplication by $dL$ is bounded in $\Hr$.
The support of $dL$ is in $(0, \varepsilon)\times M$ and
$dL= -x^{p-2} dx$ for $x$ small enough. 
\item $e^{itL}\Hr^2\subset \Hr^2$ and
  $\|e^{itL}\|_{\Bc\left(\Hr^2\right)}\leq c  
(1+t^2)$. 
\item $L^{-\alpha }[d,L^{1+\alpha}]$ is bounded in $\Hr$ and
$L^{-\alpha }[\Delta_{\tilde g}, L^{1+  \alpha}]$ in  $\Bc\big(\Hr^1, \Hr\big)$.
\item $\tilde\mu[L^{1+\alpha}, [d,\partial_L]]L^{-\alpha}P_0\tilde\mu$ 
and $\tilde\mu L^{-\alpha}[L^{1+\alpha},[d,\partial_L]]P_0\tilde\mu$ 
are bounded in $\Hr^s$;
\item $\tilde\mu L[d, \partial_L]P_0\tilde\mu$ and 
$\tilde\mu [d, \partial_L]L P_0\tilde\mu$ are bounded from 
$\Hr^s$ to $\Hr^{s-1}$.
\end{enumerate} 
\end{lemma}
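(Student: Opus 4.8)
The plan is to establish each of the five statements by reducing everything to the explicit form of $L$ near infinity, where $dL = -x^{p-2}\,dx$ (so that in the diagonalizing variable $r$ of Section \ref{s:diago}, $L$ is just multiplication by $r$), and then computing commutators using the local expression \eqref{cume2} for the metric together with the bounds \eqref{e:SRLR} on $\rho_{\rm sr},\rho_{\rm lr}$. First I would treat (1): away from a neighborhood of $M$, $L$ is a fixed smooth bounded function, so $dL$ is bounded there; near $M$, $L(y)$ is either $\ln y$ or $y^{1-p}/(1-p)$ with $y=1/x$, giving $dL=-x^{p-2}\,dx$, and since the metric $g_p=x^{2p}g_0$ has $|dx/x^2|_{g_p}=x^{-p}|dx/x^2|_{g_0}$ bounded by construction of the cusp metric, the $g_p$-pointwise norm of $dL$ is $O(1)$; hence multiplication by $dL$ is bounded on $\Hr$. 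This also fixes the support statement.

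For (2), I would use that $[\Delta,L]$ and $[[\Delta,L],L]$ are controlled: formally $[\Delta,L] = -2\nabla_{dL} - (\Delta L)$ is bounded $\Hr^1\to\Hr$ (using part (1) and the fact that $\Delta L$ is bounded, which follows from the asymptotics of $L$), and the double commutator $[[\Delta,L],L] = -2|dL|^2$ is bounded on $\Hr$. Then the standard commutator-expansion / Gronwall argument (as in \cite[Lemma A.2]{GMo1} or \cite{ABG}) gives $e^{itL}\Hr^2\subset\Hr^2$ with the polynomial bound $\|e^{itL}\|_{\Bc(\Hr^2)}\le c(1+t^2)$: the factor $(1+t^2)$ is exactly what one gets from integrating a bounded double commutator twice. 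For (3), the boundedness of $L^{-\alpha}[d,L^{1+\alpha}]$ on $\Hr$ reduces to the boundedness of $dL$ times a bounded function of $L/L$-type factors (here $L^{-\alpha}[d,L^{1+\alpha}]$ acts as wedging with $L^{-\alpha}(1+\alpha)L^\alpha\,dL = (1+\alpha)\,dL$ plus lower-order, since $L$ is a scalar function), and similarly $L^{-\alpha}[\Delta_{\tilde g},L^{1+\alpha}]$ is a first-order operator with coefficients bounded by $|dL|$ and $|\Delta_{\tilde g}L|$ — using that the $\tilde g$-Laplacian has the same domain $\Hr^1\to\Hr$ structure and that the metric perturbation contributes only bounded terms by \eqref{e:SRLR}.

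The statements (4) and (5) are the main obstacle, because they involve the operator $\partial_L$ (equivalently $\partial_r$ after conjugating by $\cU$), which is a genuine differential operator in the radial direction, and one must keep track of how $L$, $\partial_L$, and the de Rham differential $d$ interact on the low-energy subspace $P_0\Hr$. Here I would conjugate by $\cU$ so that $L\mapsto r$, $\partial_L\mapsto\partial_r$, $P_0$ is the (finite-rank in the $M$-variable) projection onto harmonic forms, and $d$ becomes, on $P_0\Hr_0$, essentially a constant-coefficient first-order operator in $r$ (by \eqref{e:Nd}–\eqref{cndkp} and exactness, the off-diagonal/threshold structure); then $[d,\partial_r]$ restricted to $P_0$ is multiplication by a function decaying like $r^{-1}$ (coming from the potentials $V_{p_i}\sim a_i/r^2$ and the weight $\alpha_k x^{1-p}$ in $\partial_L$), so $[L,[d,\partial_L]] = [r, O(r^{-1})]$ contributes bounded terms, and $L[d,\partial_L]P_0 = r\cdot O(r^{-1})$ is bounded, losing exactly one order of regularity because $[d,\partial_L]$ is a first-order object — giving the $\Hr^s\to\Hr^{s-1}$ claim in (5). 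The cutoffs $\tilde\mu$ localize everything to the cusp where these explicit computations are valid, and the compact-part contribution is harmless since $\tilde\mu$ vanishes there or the operators are then smoothing; the $s\in[-2,2]$ range is handled by duality and interpolation between $s=0$ and the endpoint cases, exactly as in the domain identifications of Corollary \ref{cor3.2}.
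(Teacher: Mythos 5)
Your overall route is the same as the paper's: reduce to explicit calculations on the cusp via the conjugation $\cU$ (so that $L\mapsto r$ and $\partial_L\mapsto\partial_r$), control single and double commutators of $L$ with $D=d+\delta$ to get parts (1)--(3), compute $[d,\partial_L]P_0$ on the cusp for parts (4)--(5), and use duality/interpolation for the range $s\in[-2,2]$. Parts (1)--(3) of your plan are essentially identical to the paper's proof.

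There is, however, a genuine confusion in your treatment of points (4) and (5). You assert that ``$[d,\partial_r]$ restricted to $P_0$ is multiplication by a function decaying like $r^{-1}$'' and then, a few lines later, that it loses one order of regularity ``because $[d,\partial_L]$ is a first-order object.'' These two statements are incompatible: a multiplication operator does not lose derivatives. The explicit computation in the paper (equation \eqref{e:comm1}) shows that
\[
[d,\partial_L]P_0 f=\bigl(2(1-p)x^{1-p}\,d+\alpha_k(1-p)x^{2(1-p)}x^{p-2}dx\wedge\bigr)P_0 f_1,
\]
so $[d,\partial_L]P_0$ is a genuine first-order differential operator (the $x^{1-p}d$ term) plus a zeroth-order term, not a multiplication. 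If one took your ``multiplication by $O(r^{-1})$'' description at face value, point (4) would become vacuous (a commutator $[L,\text{mult.}]=0$) and point (5) would erroneously give $\Bc(\Hr^s)$-boundedness instead of $\Bc(\Hr^s,\Hr^{s-1})$. The correct mechanism for (5) is that $Lx^{1-p}$ is \emph{constant} on the cusp for $p<1$ (and the factor $(1-p)$ kills the term when $p=1$), so $L[d,\partial_L]P_0$ is $d$ times a bounded function plus a bounded multiplication, i.e.\ a first-order operator with bounded coefficients, whence $\Bc(\Hr^s,\Hr^{s-1})$. For (4) one must commute $L^{1+\alpha}$ against the $x^{1-p}d$ piece, picking up a $dL\wedge$ term; again boundedness on $\Hr^s$ follows because the commutator removes the order. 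You should replace the ``multiplication'' description with the explicit first-order formula above.
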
 
\proof The first point is obvious.
The two next points follow by noticing that single and double commutators of 
$L$ with $d$ or $\delta_g$ are bounded in $\Hr$. Indeed, we recall
that $L_0$ is the multiplication by 
$r$ above $X_0'$. One derives the result for
$\delta_{\tilde g}$ by conjugating by $U_{\tilde g}$. For the two
last points, it is enough to  take $\alpha=2$ by interpolation and
duality.  A straightforward computation gives that for a smooth
$k$-form $f$  
with compact support on the cusp, we have:
\begin{equation}\label{e:comm1}
\quad [d, \partial_L]P_0 f= \big(2(1-p)x^{1-p}d +
\alpha_k(1-p)x^{2(1-p)}x^{p-2}dx\wedge\big)P_0 f_1,  
\end{equation} 
where $f=f_1+x^{-2}dx\wedge f_2$, following \eqref{decom}. Here, we
used $d P_0=(dx\wedge \partial_x\cdot)P_0$. Hence the fourth point is clear. 
For the last one, note $L
x^{(1-p)}$ is constant on the cusp, for $p< 1$. \qed

\begin{lemma}\label{l:uni}
We set $W_{\rm lr}:= U^{-1}_{\rm lr}\Delta_{\rm lr} U_{\rm lr}$ and $D_{U_{\rm lr}}:=d+U_{\rm lr}\delta U^{-1}_{\rm lr}$  in $\Hr_g$ and we also set $W_{\rm sr}:= U^{-1}_{\rm sr}\Delta_{\rm sr} U_{\rm sr}$ and
$D_{U_{\rm sr}}:=d+U_{\rm sr}\delta U^{-1}_{\rm sr}$ in $\Hr_{g_{\rm lr}}$. Then,
\begin{enumerate}
\item We have $W_{\rm lr}= U_{\rm lr} D_{U_{\rm lr}}^2  U^{-1}_{\rm lr}$ and $W_{\rm sr}= U_{\rm sr} D_{U_{\rm sr}}^2  U^{-1}_{\rm sr}$. Moreover, they are essentially self-adjoint on $\cC_c^{\infty}(X, \Lambda^* X)$ in $\Hr_g$ and $\Hr_{\rm lr}$, respectively. Their domain is $\Hr^2$.
\item We have $(W_{\rm lr} +i)^{-1}-
(\Delta_g+i)^{-1}\in\Kc(\Hr_g)$ and 
$(W_{\rm sr} +i)^{-1}-
(\Delta_{\rm lr}+i)^{-1}\in\Kc(\Hr_{\rm lr})$.
\item The operator $W_{\rm lr}$ is a long-range 
perturbation of $\Delta_g$ inside $\Bc(\Hr^2, \Hr^{-2})$. 
\item The operator $W_{\rm sr}$ is a short-range 
perturbation of $\Delta_{\rm lr}$ inside $\Bc(\Hr^2, \Hr^{-2})$. 
\end{enumerate} 
\end{lemma}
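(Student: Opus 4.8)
The plan is to analyze $W_{\rm lr}$ and $W_{\rm sr}$ by unwinding the conjugation through the (multiplication) operators $U_{\rm lr}$ and $U_{\rm sr}$, exploiting that each is multiplication by a positive bounded function whose difference from $1$ is estimated in Lemma \ref{l:U}. The starting point for (1) is the standard fact that for a conformal change of metric $\tilde g = f^2 g$ on an $n$-manifold, the Hodge star and hence the codifferential on $k$-forms transforms by an explicit power of $f$; concretely, $\delta_{\tilde g} = f^{(n-2k)/2}\,\delta_g\, f^{-(n-2k)/2}$ up to the obvious degree bookkeeping, so that $U\delta_g U^{-1}$ with $U$ the multiplication operator defined above is precisely $\delta_{\tilde g}$ transported back to $\Hr_g$. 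Therefore $D_{U_{\rm lr}} = d + U_{\rm lr}\delta_g U_{\rm lr}^{-1}$ is unitarily equivalent to $d+\delta_{g_{\rm lr}}$, and squaring gives $W_{\rm lr} = U_{\rm lr}(d+\delta_{g_{\rm lr}})^2 U_{\rm lr}^{-1} = U_{\rm lr} D_{U_{\rm lr}}^2 U_{\rm lr}^{-1}$; the same computation with $g_{\rm lr}$ in place of $g$ gives the $W_{\rm sr}$ identity. Essential self-adjointness on $\cC_c^\infty(X,\Lambda^*X)$ then follows because $(X,g)$, $(X,g_{\rm lr})$ and $(X,\tilde g)$ are all complete metrics quasi-isometric to $g_p$ (using $\inf(1+\rho_{\rm sr}+\rho_{\rm lr})>1$ and the boundedness hypotheses \eqref{e:SRLR}), so \cite{Ga} applies to $\Delta_{g_{\rm lr}}$ and $\Delta_{\tilde g}$; transporting back by the unitaries $U_{\rm lr}, U_{\rm sr}$ preserves essential self-adjointness, and the domain equals $\Hr^2$ because the perturbed and unperturbed Sobolev scales coincide (the norms are equivalent, as already noted before Lemma \ref{l:U}).

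For (2) I would write the resolvent difference as a telescoping identity: $(W_{\rm lr}+i)^{-1}-(\Delta_g+i)^{-1} = (W_{\rm lr}+i)^{-1}(\Delta_g - W_{\rm lr})(\Delta_g+i)^{-1}$, and observe that $\Delta_g - W_{\rm lr}$, expanded using $W_{\rm lr} = U_{\rm lr} D_{U_{\rm lr}}^2 U_{\rm lr}^{-1}$, is a differential operator whose coefficients involve $\rho_{\rm lr}$, $d\rho_{\rm lr}$ and $\Delta_g\rho_{\rm lr}$, all of which tend to $0$ at infinity (by \eqref{e:SRLR}, since $L^\varepsilon\rho_{\rm lr}$ is bounded and $L\to\infty$). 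Such an operator, sandwiched between the bounded maps $(\Delta_g+i)^{-1}:\Hr\to\Hr^2$ and $\Hr^{-2}\to\Hr$ given by $(W_{\rm lr}+i)^{-1}$, is compact by the Rellich--Kondrakov lemma (compactness of $\Hr^2\hookrightarrow\Hr^1_{\rm loc}$ together with decay of coefficients), exactly as in the stability-of-essential-spectrum arguments used elsewhere in the paper. The same telescoping with $\Delta_{g_{\rm lr}}$ and $W_{\rm sr}$, now using that $\rho_{\rm sr}$ (hence $\rho_{\rm sr}/(1+\rho_{\rm lr})$) and its derivatives are in $L^{1+\varepsilon}L^{-1}\cdot L^\infty$, gives the second compactness statement.

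For (3) and (4) the task is to verify the short-range/long-range bounds \eqref{e:sr'}, \eqref{e:lr'} for $T = W_{\rm lr}-\Delta_g$ (resp. $T = W_{\rm sr}-\Delta_{g_{\rm lr}}$) as operators in $\Bc(\Hr^2,\Hr^{-2})$. Writing $D_{U_{\rm lr}}^2 = d^2 + d\,U_{\rm lr}\delta_g U_{\rm lr}^{-1} + U_{\rm lr}\delta_g U_{\rm lr}^{-1} d + (U_{\rm lr}\delta_g U_{\rm lr}^{-1})^2$ and subtracting $\Delta_g = d\delta_g+\delta_g d$, every term in $T$ is a product of $d$ or $\delta_g$ with a commutator $[U_{\rm lr},d]$, $[U_{\rm lr}^{-1},\delta_g]$, or the bounded multiplication operators $U_{\rm lr}^{\pm1}-1$; by Lemma \ref{l:U} these carry a weight $L^{\varepsilon}$ or $L^{1+\varepsilon}$, and by Lemma \ref{l:L} the relevant commutators of $L$ with $d$, $\delta_g$, $\partial_L$ and $P_0$ are bounded on the appropriate Sobolev spaces. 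The key point, which is why $W_{\rm lr}$ is only long-range while $W_{\rm sr}$ is short-range, is that $U_{\rm lr}$ is radial, so $[U_{\rm lr},P_0]=0$ and the bad middle term $\tilde\mu[T,P_0]L\theta_{\rm lr}(L/r)\tilde\mu$ in \eqref{e:lr'} vanishes, leaving only $[T,L]$ (gaining $L^\varepsilon$) and $P_0\tilde\mu[T,\partial_L]L\tilde\mu$ (handled by \eqref{e:comm1} and Lemma \ref{l:L}(4)--(5)); whereas $U_{\rm sr}$ need not commute with $P_0$, but there the full weight $L^{1+\varepsilon}$ available from $\rho_{\rm sr}$ is enough to close \eqref{e:sr'} directly. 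I expect the bookkeeping of commutators between $U$, $d$, $\partial_L$ and $P_0$ in the long-range case — making sure every term actually lands in $\Bc(\Hr^2,\Hr^{-2})$ with the claimed weight and respecting the cutoffs $\tilde\mu$, $\tilde\cchi$ — to be the main obstacle; the conformal transformation law for $\delta$ and the self-adjointness are routine by comparison.
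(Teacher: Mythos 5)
Your proposal follows the same architecture as the paper's proof, and each step you identify corresponds to something the paper actually does. For (1) both you and the paper use the conformal transformation of $\delta$, essential self-adjointness of $d+\delta_{g'}$ on complete quasi-isometric metrics, and equivalence of the perturbed and unperturbed Sobolev scales to identify the domain with $\Hr^2$. For (2) the paper also uses the (generalized) resolvent identity together with $U^{\pm1}-1\in\Kc(\Hr^1,\Hr)$ (a Rellich--Kondrakov-type compactness that comes from the decay of $\rho_{\rm sr},\rho_{\rm lr}$), exactly as you propose. For (3)--(4) the paper expands $F_\cdot = W_\cdot-\Delta_\cdot$ into a telescoping sum of products of $d$, $\delta$, $U^{\pm1}-1$, $(D_{U_\cdot}-D)$, and then commutes the $L$-weights into place using Lemma \ref{l:L} and Lemma \ref{l:U} term by term — the bookkeeping you flag as the remaining obstacle is precisely what the paper carries out, one ``bad term'' at a time.

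One remark on your explanation of why the $[T,P_0]$ condition in \eqref{e:lr'} holds in the long-range case: radiality of $\rho_{\rm lr}$ ensures $[U_{\rm lr},P_0]=0$, but $T$ also contains $d$ and $\delta_g$, which do not commute with $P_0$ in general. The reason the $[T,P_0]$ term is harmless here is not just $[U_{\rm lr},P_0]=0$ but also that on the cusp, $d^M P_0 = \delta^M P_0 = P_0 d^M = P_0\delta^M = 0$ (since $\im P_0$ consists of harmonic forms on $M$), so all factors in $T$ commute with $P_0$ where $\tilde\mu$ is supported. The paper itself passes over this tacitly (it only explicitly bounds the $[T,L]$ and $[T,\partial_L]$ pieces, citing radiality for the rest), so your treatment is consistent with theirs, but the mechanism is slightly subtler than your one-line justification suggests. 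Apart from this minor imprecision and the deferred commutator bookkeeping, your proposal and the paper's proof are in close agreement.
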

\proof
Since the manifold is complete, $D_{U_{\rm lr}}$ is self-adjoint on the
closure of $\cC^{\infty}_c(X, \Lambda^* X)$ under the norm 
$\|f\|+\|df\|+\|\delta_{g_{\rm lr}} f\|$. Now remark that for all 
$\alpha\in\R$, $(1+\rho_{\rm lr})^\alpha$ stabilizes $\Hr^2$ and by 
duality and interpolation $\Hr^s$ for all $s\in[-2,2]$. Note also that 
$\Dc(U_{\rm lr} D_{\rm lr} U^{-1}_{\rm lr})=\Dc(D)$ to get the first point (the argument being the same for the short-range part).     

We now compare the two operators $F_{lr}:= W_{\rm lr} - \Delta_g $ and $F_{\rm sr}:=W_{\rm sr} - \Delta_{g_{\rm lr}} $ in $\Hr_{g}$ and in $\Hr_{g_{\rm lr}}$, respectively.  We compute on
$\cC^\infty_c(X, \Lambda^* X)$, 
\begin{align*}
F_\cdot=  U_\cdot(D_{U_\cdot}-D)D_{U_\cdot} U_\cdot^{-1} + 
U_\cdot D(D_{U_\cdot}-D)U^{-1}_\cdot+U_\cdot D^2(U_\cdot^{-1}-1)+(U_\cdot-1)D^2.
\end{align*}
Note also that $D_{U_\cdot}-D= (U_\cdot-1)d^* U_\cdot^{-1}- d^*(U_\cdot^{-1}-1)$, where the adjoint of $d$, is taken in $\Hr_{g}$ and in $\Hr_{g_{\rm lr}}$, respectively. 

To get point (2), since $W_{\rm lr}$ and $\Delta_g$ have the same
domain, it is enough to show that $W_{\rm lr}-\Delta_g$ is a compact operator
in $\Bc(\Hr^2, \Hr^{-2})$ by writing the difference
of the resolvent in a generalized way, see \cite[Lemma
6.13]{GMo1}. This follows using $U^{\pm 1}-1\in\Kc(\Hr^1, \Hr)$. The argument is the same of 
the short-range part. 

Focus on point $(4)$. We treat one of the bad terms. 
We compute on $\cC^\infty_c(X, \Lambda^* X)$:
\begin{align*}
L^{1+\varepsilon} \delta_{\rm lr} (U^{-1}_{\rm sr}-1)D_{U_{\rm sr}}U^{-1}_{\rm sr}=&\, \,
\delta_{\rm lr} L^{1+\varepsilon}(U^{-1}_{\rm sr}-1)D_{U_{\rm sr}}U^{-1}_{\rm sr}
\\
&
+ \big([L^{1+\varepsilon}, \delta_{\rm lr}]L^{-\varepsilon}\big)L^{\varepsilon}
(U^{-1}_{\rm sr}-1)D_{U_{\rm sr}}U^{-1}_{\rm sr}. 
\end{align*}
By density, the first term extends to an element of 
$\Bc(\Hr^{1}, \Hr^{-1})$. Indeed, using the invariance of the domains, 
$D_{U_{\rm sr}}U^{-1}_{\rm sr}\in\Bc(\Hr^{1}, \Hr)$ and $L^{1+\varepsilon}(U^{-1}_{\rm sr}-1)\in\Bc(\Hr)$ by Lemma \ref{l:U}. The second term extends to an 
element of $\Bc(\Hr^{1}, \Hr)$ using Lemma \ref{l:L}. The other terms are treated in the same way.

We turn to point $(3)$. We treat as above the term in $L^\varepsilon$.  
It remains to show that $\|L^{1+\varepsilon }\tilde\mu[F_{\rm lr},
\partial_L]P_0\tilde\mu\|_{\Bc\left(\Hr^2, \Hr^{-2}\right)}$ is
finite. We pick a bad term and drop $P_0$, $\tilde\mu$ for clarity.
\begin{equation*}\begin{split}
L^{1+\varepsilon}[\delta (U^{-1}_{\rm lr}-1)D_{U_{\rm lr}} U^{-1}_{\rm lr}, \partial_L]=&\	
L^{1+\varepsilon}[\delta , \partial_L](U^{-1}_{\rm lr}-1)D_{U_{\rm lr}} U^{-1}_{\rm lr}+ L^{1+\varepsilon}\delta [U^{-1}_{\rm lr}, \partial_L] D_{U_{\rm lr}} U^{-1}_{\rm lr}\\
&\hspace*{-1cm}+ L^{1+\varepsilon}\delta (U^{-1}_{\rm lr}-1)[D_{U_{\rm lr}}, \partial_L] U^{-1}_{\rm lr}+L^{1+\varepsilon}\delta (U^{-1}_{\rm lr}-1)D_{U_{\rm lr}}[ U^{-1}_{\rm lr}, \partial_L].  
\end{split}\end{equation*}
After commutation, the first term in the right-hand side becomes 
\[\big([\delta , \partial_L]L\big) 
\big(L^{\varepsilon}(U^{-1}_{\rm lr}-1)\big)D_{U_{\rm lr}} U^{-1}_{\rm lr} +
\big([L^{1+\varepsilon}, [\delta , \partial_L]]L^{-\varepsilon}\big)
 \big(L^\varepsilon(U^{-1}_{\rm lr}-1)\big) D_{U_{\rm lr}} U^{-1}_{\rm lr}\] 
while the second one is
\[\delta \big(L^{1+\varepsilon} [U^{-1}_{\rm lr}, \partial_L]\big)
D_{U_{\rm lr}} U^{-1}_{\rm lr} + \big([L^{1+\varepsilon}, \delta]L^{-\varepsilon}\big) 
\big(L^{\varepsilon}[U^{-1}_{\rm lr}, \partial_L]\big) D_{U_{\rm lr}} U^{-1}_{\rm lr}.\] 
Use Lemma \ref{l:L} and Lemma \ref{l:U} to control the 
terms in brackets. They are elements of $\Bc(\Hr^1, \Hr^{-1})$ by density. 
The other terms are controlled in the same way, by commutation, we let 
$L^{\varepsilon}$ touch $(U^{\pm 1}_{\rm lr}-1)$ and 
$L^{1+\varepsilon}$ touch $[U^{\pm 1}_{\rm lr}, \partial_L]$.\qed

We are now in position to show the Mourre estimate for the perturbed
metric.

\proof[Proof of Theorem \ref{t:Ith1}]
Let $\cJ$ an interval  containing no
element of $\kappa(p)$.
Taking $R$ large enough, Theorem \ref{t:mourre_0} gives $c>0$
and a compact operator $K$ so that the Mourre estimate
\begin{equation}\label{e:T}
E_\cJ(T) [T,iS] E_\cJ(T) \geq c E_\cJ(T)+ K 
\end{equation}
holds in the sense of forms in $L^2(X, \Lambda^k X, g)$, for $T=\Delta_{k}$. 
Moreover, $\Delta_{k}$ belongs to $\cC^{2}\big(S; \Dc(\Delta_{k}), 
L^2(X, \Lambda^k X, g)\big)$.
We dropped  the subscript $R$ of $S$ as it plays no further r\^ole. 

By Lemma \ref{l:uni},  $W_{\rm lr}$ belongs to the class 
$\cC^{1,1}\big(S;\Dc(\Delta_{k}),\Dc(\Delta_{k})^*\big)$. 
In particular, we obtain
$W_{\rm lr}\in\cC^{1}_{\rm u}\big(S;\Dc(\Delta_{k}), \Dc(\Delta_{k})^*\big)$, 
meaning that $t\mapsto e^{itS}W_{\rm lr}e^{-itS}$ is norm continuous in
$\Bc\big(\Dc(\Delta_{k}), \Dc(\Delta_{k})^*\big)$. By the point (2) of Lemma
\ref{l:uni} and \cite[Theorem 7.2.9]{ABG} the inequality \eqref{e:T}
holds for  $T=W_{\rm lr}$ (up to changing $c$ and $K$). Repeat now with the short-range part and get the Mourre estimate with $T=U_{\rm lr}^{-1}W_{\rm sr}U_{\rm lr}$. 

Now go into $\Hr_{\tilde g}$ conjugating with $U_{\tilde g}$. 
The con\-ju\-ga\-te operator becomes
$S_{\tilde g}$. Therefore, $\Delta_{\tilde g, k}\in\cC^{1,1}\big(S_{\tilde g}; \Dc( \Delta_{{\tilde g},k}), \Dc(\Delta_{{\tilde g},k})^*\big)$ and there are $c>0$ and a compact operator $K$ so that 
\begin{equation}\label{e:tildeT}
E_\cJ(T_{\tilde g}) [T_{\tilde g},iS_{\tilde g}] E_\cJ(T_{\tilde g}) \geq c E_\cJ(T_{\tilde g})+ K 
\end{equation}
holds in the sense of forms in $L^2(X, \Lambda^k X, \tilde g)$ for
$T_{\tilde g}=\Delta_{\tilde g, k}$. We now add the perturbation given by
$V_{\rm sr}$ and $V_{\rm lr}$. Note that $H_0:=\Delta_{\tilde g, k} + V_{\rm
  lr}$ has the same domain as $H:= H_0+V_{\rm sr}$ and that
$(H+i)^{-1}- (H_0+i)^{-1}$ is compact by Rellich-Kondrakov lemma. By
Lemma \ref{l:V},  we obtain $H\in\cC^{1,1}\big(S_{\tilde g}; \Dc(H),\Dc(H)^*\big)$. As above, the inequality \eqref{e:tildeT} is true for $\tilde T=H$.   

We now deduce the different claims of the theorem. The first comes from
\cite[Theorem 7.5.2]{ABG}. The second ones is a consequence of the Virial
theorem. For the third point first note that $\Lr_s\subset\Dc(|S|^s)$ for
$s\in [0,2]$ by Lemma \ref{l:sa} and use \cite{ggm}. Finally, the last point follows from \cite[Theorem
 7.6.11]{ABG}. \qed 

We finish this section with two remarks which improve the result.

\begin{remark} Concerning the point \eqref{b)}, we are able to show 
that the eigenvalues in $\kappa(p)$ are of finite multiplicity only 
in the case of the metric \eqref{cume2} i.e.\ when the perturbation is smooth, using
\cite[Lemma B.1]{GMo1}. At every  other energy level, 
it follows from the Mourre estimate \eqref{e:tildeT}, with $\tilde T=H$, via the Virial Lemma. 
\end{remark}

\begin{remark}
If $M$ is disconnected and if one of its connected components $M_0$ has 
Betti numbers $b_k(M_0)=b_{k-1}(M_0)=0$, then by taking $L$ to be $1$
on the corresponding cusp $[0,\infty)\times M_0$, 
any potential with support in this cusp and tending to $0$ at infinity
(without any required speed) is a short-range perturbation, see
\cite{GMo1} for similar statements.
\end{remark} 

\section{Betti numbers and cusps of hyperbolic manifolds}\label{s:betti}

We conclude by some relations between our analysis and the topology of 
finite-volume hyperbolic manifolds. Recall first some definitions and 
basic topological facts. For any integer $k$, the 
Betti number $b_k(M)$ of a smooth manifold $M$ 
is defined as the dimension of the de Rham cohomology
group $H^k(M)$, which equals also the $\qz$-dimension of the rational 
cohomology group $H^k(M,\qz)$. The Betti numbers are always zero outside the 
range $0,\ldots,n$. For instance, the Betti numbers of the sphere
$S^n$ are 1 in dimension $0,n$ and vanish otherwise. 
By the K\"unneth formula, we can compute the Betti numbers of a Cartesian 
product $M\times N$ in terms of the Betti numbers of $M,N$:
\[b_k(M\times N)=\sum_{j=0}^k b_j(M)b_{k-j}(N).\]
Thus the Betti numbers $b_k$ of the torus $T^n:=(S^1)^n$ equal the binomial 
coefficients $\binom{n}{k}$, in particular none of them (in the range 
$0,\ldots,n$) is zero.

It is easy to see that a cylinder $M\times (0,\infty)$, with the metric $g_1$ 
given on the whole cylinder by \eqref{mc} for $p=1$, is
hyperbolic (i.e.\ it has constant sectional curvature $-1$) if and only
if $M$ is flat (i.e.\ its sectional curvatures vanish identically).
Moreover, every finite-volume complete hyperbolic manifold is of this form
outside a compact set \cite{benepetro}. Closed
flat manifolds, also called \emph{Bieberbach manifolds}, have been
classified in  dimension $3$ by Hantzsche and Wendt
\cite{hantzschewendt}. They obtain ten  different topological types of
Bieberbach manifolds, of which  $\mathfrak{A}_1$--$\mathfrak{A}_6$ are
orientable, while $\mathfrak{B}_1$--$\mathfrak{B}_4$ are 
non-orientable. We remark here that from the description of \cite[page
  610]{hantzschewendt},  it follows that $\mathfrak{A}_6$ is the only 
Bieberbach $3$-manifold with first Betti number  equal to $0$. Since
$\mathfrak{A}_6$ is orientable, by Poincar\'e duality we see that
$b_2(\mathfrak{A}_6)=b_1(\mathfrak{A}_6)=0$. Also from loc.\ cit.\ and
from the explicit description of the holonomy groups in \cite{wolf},
the only non-orientable Bieberbach $3$-manifold with second Betti
number $b_2$  equal to $0$ is $\mathfrak{B}_4$, which moreover has
$b_3(\mathfrak{B}_4)=0$ by non-orientability.

As a by-product of our analysis we obtain the following Hodge decomposition 
on asymptotically hyperbolic manifolds.
\begin{proposition}\label{p:hodge}
Let $(X,g)$ be a complete non-compact hyperbolic manifold of finite volume.
Let $M$ be the boundary at infinity. If $n=\dim(X)$ is odd, suppose also that the 
Betti number $b_{\frac{n-1}{2}}(M)=0$. 
Then $\im(d)$ and $\im(\delta_{\tilde g})$ are closed and  
\begin{equation*}
L^2(X, \Lambda^*X)= \ker(\Delta_{\tilde g})\oplus \im(d)\oplus
\im(\delta_{\tilde g}).	 
\end{equation*}
The conclusion holds more generally for any metric
$\tilde g$ which satisfies \eqref{e:asympto} and \eqref{e:asympto2} with respect to 
the given hyperbolic metric.
\end{proposition}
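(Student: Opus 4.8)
The plan is to reduce the statement to the absence of $0$ from the essential spectrum of the full Hodge Laplacian $\Delta_{\tilde g}=\bigoplus_{k}\Delta_{\tilde g,k}$ acting on $\Lambda^*X$, and then to extract this spectral information from Proposition \ref{p:thema}. Since $\tilde g$ is quasi-isometric to the complete hyperbolic metric $g$ it is itself complete, so $\Delta_{\tilde g}$ is essentially self-adjoint, $\ker(\Delta_{\tilde g})=\ker d\cap\ker\delta_{\tilde g}$, and the weak Kodaira decomposition
\[
L^2(X,\Lambda^*X,\tilde g)=\ker(\Delta_{\tilde g})\oplus\overline{\im(d)}\oplus\overline{\im(\delta_{\tilde g})}
\]
holds. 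Hence it suffices to show that $\im(d)$ and $\im(\delta_{\tilde g})$ are already closed. If $0\notin\siges(\Delta_{\tilde g})$ there is $\varepsilon>0$ with $\sigma(\Delta_{\tilde g})\cap(0,\varepsilon)=\emptyset$; restricting to the orthogonal complement of $\ker(\Delta_{\tilde g})$ yields $\|d\omega\|^2+\|\delta_{\tilde g}\omega\|^2=\langle\Delta_{\tilde g}\omega,\omega\rangle\geq\varepsilon\|\omega\|^2$, and this coercivity forces $d+\delta_{\tilde g}$, and therefore $d$ and $\delta_{\tilde g}$ separately (their ranges being mutually orthogonal), to have closed range. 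So everything reduces to proving $0\notin\siges(\Delta_{\tilde g})$, and as $\Delta_{\tilde g}$ is a finite orthogonal direct sum, $\siges(\Delta_{\tilde g})=\bigcup_k\siges(\Delta_{\tilde g,k})$.

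The next step is to compute $\siges(\Delta_{\tilde g,k})$. Near each cusp the hyperbolic metric $g$ is, in suitable coordinates, the model metric \eqref{mc} with $p=1$, which is an exact cusp metric in the sense of Definition \ref{defex}; and by hypothesis $\tilde g$ satisfies the perturbation bounds \eqref{e:asympto}, \eqref{e:asympto2} with respect to it. Therefore Proposition \ref{p:thema}(1), which rests on the stability statement Proposition \ref{p:stabess}, applies with $p=1$ and gives $\siges(\Delta_{\tilde g,k})=[\inf\kappa_k(1),\infty)$, where $\kappa_k(1)$ denotes the threshold set \eqref{e:kappa} for the degree $k$. Consequently $0\in\siges(\Delta_{\tilde g})$ if and only if $0\in\kappa_k(1)$ for some $k$, i.e.\ if and only if one of the constants $c_0,c_1$ of \eqref{c0c1} vanishes for some $k$ with the corresponding Betti number in \eqref{e:kappa} non-zero.

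Finally comes the arithmetic of the thresholds. For $p=1$ one has $c_0=(2k+1-n)/2$ and $c_1=(2k-1-n)/2$, so $c_0=0$ forces $n=2k+1$ (hence $n$ odd, $k=\frac{n-1}{2}$, with attached Betti number $b_k(M)=b_{\frac{n-1}{2}}(M)$), while $c_1=0$ forces $n=2k-1$ (hence $n$ odd, $k=\frac{n+1}{2}$, with attached Betti number $b_{k-1}(M)=b_{\frac{n-1}{2}}(M)$). In every other degree $c_0^2,c_1^2>0$, so $\inf\kappa_k(1)>0$ whenever $\kappa_k(1)\neq\emptyset$. Hence $0\in\siges(\Delta_{\tilde g})$ exactly when $n$ is odd and $b_{\frac{n-1}{2}}(M)\neq0$, which is excluded by the hypothesis; so $0\notin\siges(\Delta_{\tilde g})$, the coercive estimate of the first paragraph applies, and the orthogonal decomposition with closed images follows. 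I expect the only mildly delicate point to be this last bookkeeping, namely checking that the obstruction is carried exactly by the single middle Betti number $b_{\frac{n-1}{2}}(M)$, together with verifying that $\tilde g$ is covered by Proposition \ref{p:thema}; both are routine, the former a short computation with \eqref{c0c1} and the latter immediate from \eqref{e:asympto}, \eqref{e:asympto2}.
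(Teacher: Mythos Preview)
Your proposal is correct and follows essentially the same route as the paper: reduce to showing $0\notin\siges(\Delta_{\tilde g})$, obtain this from Proposition~\ref{p:thema} with $p=1$, and then deduce the closedness of $\im(d)$ and $\im(\delta_{\tilde g})$ and the Hodge decomposition. The only cosmetic differences are that you apply Proposition~\ref{p:thema} directly to $\tilde g$ (the paper applies it to $g$ and then invokes Proposition~\ref{p:stabess} separately, which amounts to the same thing since the former already incorporates the latter), and you spell out both the threshold arithmetic and the coercivity argument for closed range, whereas the paper states the former without computation and outsources the latter to \cite[Theorem 5.10]{bue}.
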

This proposition is a consequence of the fact that $0$ is not 
in the essential spectrum of the Hodge Laplacian 
(compare with \cite{mazphi}) 
and of a result of stability of the essential spectrum 
obtained in \cite{GG}, see Proposition \ref{p:stabess}.
\begin{proof} Every complete noncompact hyperbolic manifold of finite volume 
is of the form \eqref{mc} with $p=1$ outside a compact set.
By Proposition \ref{p:thema}, we obtain that $\Delta$ has $0$ in its essential 
spectrum if and only if $n$ is odd and $b_{(n-1)/2}$ is non-zero.
Therefore, by hypothesis, $0$ is not in the essential spectrum of 
the Hodge Laplacian $\Delta$. Now by Proposition
\ref{p:stabess}, $0$ is not in the essential spectrum of
$\Delta_{\tilde g}$. Using the closed graph theorem, it follows easily
that $\im(\Delta)$ is closed. Since the metric is smooth, one
obtains the closedness of $\im(d)$ and of $\im(\delta_{\tilde g})$
and also the announced Hodge decomposition, see \cite[Theorem 5.10]{bue}.  
\end{proof}

An important geometric question in dimension $4$ is to decide which
flat manifolds occur as cusps of hyperbolic manifolds. It is known that
every manifold in the Hantzsche-Wendt list appears among the cusps
of some finite-volume hyperbolic manifold \cite{niemershiem} but 
it is not known whether
$1$-cusped hyperbolic $4$-manifold exist at all. There is 
an obstruction to the existence of oriented hyperbolic 
$4$-manifolds with certain 
combinations of cusps. This obstruction, discovered by Long and Reid
\cite{longreid1}, is the integrality of the eta invariant 
(for the signature operator) of the oriented 
Bieberbach manifold modeling the cusps. The eta invariant of Dirac
operators on Bieberbach $3$-manifolds was computed by Pf\"affle 
\cite{pfaffle} and may provide additional obstructions.
See also \cite{Gui} and references therein for an introduction to the
eta invariant. 

The essential spectrum of the Laplacian on $k$-forms was computed in
\cite[Theorem 1.11]{mazphi}, in particular it is stated to be
non-empty for all $k$. One implicit assumption in the proof of
\cite[Lemma 5.26]{mazphi}   seems to be the non-vanishing  
of $H^k(M)$; the proof also works provided $H^{k-1}(M)\neq 0$. 
Rafe Mazzeo confirmed to us in a private communication that [24,
Theorem 1.11] holds under the tacit assumption that the manifold $X$
is orientable, and that the cusp cross-section $M$ has non-vanishing 
cohomology in degrees $k$ or $k-1$. There exist however flat $3$-manifolds $M$
for which $b_2(M)=b_{1}(M)=0$, of the type $\mathfrak{A}_6$ in the 
Hantzsche-Wendt classification. By taking Cartesian products of such 
$M$ with itself, we get examples in dimension $3j$ for all $j\geq 1$.
Thus we are led to the following
\begin{Question}\label{q1}
Does there exist a complete finite-volume hyperbolic manifold of dimension $4$
such that each component of its boundary at infinity is the rational homology 
sphere $\mathfrak{A}_6$? 
\end{Question}

As we see in Theorem \ref{t:Ith}, this issue is crucial for 
the nature of the spectrum of the Laplacian on $k$-forms. 
In light of Theorem \ref{t:Ith}, such a manifold would provide a 
counterexample to \cite[Theorem 1.11]{mazphi} 
for $k=2$. If we accepted the result of \cite{mazphi} to continue to hold 
as stated there in full generality, then the answer would always be
negative; however this is unlikely and we conjecture that there exists
indeed such a $4$-manifold.  More generally, we are led to
\begin{Question}
Does there exist a complete finite-volume hyperbolic 
manifold such that
for some $k\in \N$, the boundary at infinity $M$ satisfies 
$b_k(M)=b_{k-1}(M)=0$?
\end{Question}
This question we can answer affirmatively.
In the literature there exists a non-orientable, finite-volume
hyperbolic $4$-manifold with $5$ cusps, all of which are topologically
of type $\mathfrak{B}_4$. It is obtained by gluing together 
the sides of a regular ideal $24$-cell in hyperbolic $4$-space
in a particular way. There are $1171$
such identifications possible \cite{rattsch}; one of them, called here
$\mathfrak{RT}_{1080}$
(\cite[Table 3, nr.\ crit.\ 1080]{rattsch}), has the desired property of having
all its cusps of type $\mathfrak{B}_4$. It follows from Theorem \ref{t:Ith}
that the Laplacians on $3$- and $4$-forms on $\mathfrak{RT}_{1080}$ 
have purely discrete spectra, which shows that the additional hypothesis 
that at least one of $b_k(M),b_{k-1}(M)$ be different from $0$ is necessary for 
\cite[Theorem 1.1]{mazphi} to hold as stated. We remark here that 
Mazzeo-Phillips use the Hodge star operator, which only exists on oriented 
manifolds, to reduce the analysis to $k\leq n/2$, while our counterexample 
is non-orientable and works for $n=4$ and $k\in\{3,4\}$. 

In conclusion, the complete general statement  about the essential
spectrum of the Laplacian on forms on finite-volume hyperbolic
manifolds, without any restriction on the cusp cross-section, is 
obtained from \cite[Theorem 1.11]{mazphi} by adding the
following exceptional  case, which follows from the above
counterexample and our Theorem \ref{t:Ith}: 
\begin{quote}
\emph{In the case where the volume is finite and
all the cusps are modeled on
Bieberbach manifolds with zero Betti numbers in dimensions $k$ and $k-1$,
the spectrum of the Laplacian on $k$-forms is purely discrete and obeys the 
classical Weyl law.}
\end{quote}

The list from \cite{rattsch} includes precisely two hyperbolic 
$4$-manifolds with all cusps of the same type in the Hantzsche-Wendt 
classification. 
One is $\mathfrak{RT}_{1080}$ mentioned above, the other one is
$\mathfrak{RT}_{1011}$, whose five cusps are all of type $\mathfrak{B}_1$.
It is a fact that $\mathfrak{B}_1$ has non-zero Betti numbers $b_1$ 
and $b_2$. 
Except for $\mathfrak{RT}_{1080}$, none of the Rattcliffe-Tschantz
manifolds satisfy $b_2(M)=0$, therefore the corresponding Laplacians 
$\Delta_2$ and $\Delta_3$ have non-empty continuous spectrum 
(Theorem \ref{t:Ith1}). The same is always true for $k=0,1$ since $b_0(M)$ is 
nonzero. However, $\Delta_4$ does have purely discrete spectrum for quite a few 
manifolds from \cite[Tables 3,4]{rattsch}. This is because on one hand, 
$b_4$ of a $3$-manifold is always $0$; on the other hand, $b_3$ of a 
non-orientable $3$-manifold is equally zero. Thus the hypothesis of Theorem 
\ref{t:Ith} is satisfied for $k=4$ for every Rattcliffe-Tschantz
manifold whose cusps are modeled on non-orientable Bieberbach manifolds. Besides
$\mathfrak{RT}_{1080}$ these are:
$\mathfrak{RT}_{35}$, $\mathfrak{RT}_{130}$--$\mathfrak{RT}_{156}$,
$\mathfrak{RT}_{426}$--$\mathfrak{RT}_{516}$, 
$\mathfrak{RT}_{538}$--$\mathfrak{RT}_{544}$
$\mathfrak{RT}_{811}$--$\mathfrak{RT}_{865}$,
$\mathfrak{RT}_{942}$--$\mathfrak{RT}_{959}$, 
$\mathfrak{RT}_{1012}$--$\mathfrak{RT}_{1014}$,
$\mathfrak{RT}_{1080}$--$\mathfrak{RT}_{1084}$,
$\mathfrak{RT}_{1095}$, $\mathfrak{RT}_{1101}$--$\mathfrak{RT}_{1105}$,
$\mathfrak{RT}_{1109}$, $\mathfrak{RT}_{1122}$--$\mathfrak{RT}_{1127}$,
$\mathfrak{RT}_{1134}$, $\mathfrak{RT}_{1142}$, $\mathfrak{RT}_{1143}$,
$\mathfrak{RT}_{1156}$--$\mathfrak{RT}_{1158}$, and $\mathfrak{RT}_{1167}$.
None of the above examples is orientable. Nevertheless,
we expect the answer to Question \ref{q1} to be affirmative.

\appendix
\section{Stability of the essential spectrum}
\renewcommand{\theequation}{A.\arabic{equation}}
\setcounter{equation}{0}
If $A,B$ are closed operators acting in a Hilbert space $\Hr$ and if
there is $z\in\rho(A)\cap\rho(B)$ such that $(A-z)^{-1}-(B-z)^{-1}$
is a compact operator, then we say that {\em $B$ is a compact
  perturbation of $A$}.
If this is the case, then the difference $(A-z)^{-1}-(B-z)^{-1}$ is
a compact operator for all $z\in\rho(A)\cap\rho(B)$. In particular,
{\em if $B$ is a compact perturbation of $A$, then $A$ and $B$ have
  the same essential spectrum}, the essential spectrum of a closed
operator $S$ being the set of complex numbers $\lambda$ such that
$S-\lambda$ is not Fredholm.

For the sake of completeness, we prove the stability of
the essential spectrum of the Laplacian acting on forms under 
perturbations of the metric. As mentioned  in \cite[Remark 9.10]{GG},
this result holds on much weaker hypothesis of regularity of the
metric, see Remark \ref{r:stab}. We stay elementary
and give a proof in the smooth case. Let $(X, g)$ be a connected,
smooth, non-compact Riemannian manifold. Choose $x_0\in X$. Let $d_g$
be the geodesic distance induced by $g$. Let $\Hr:=L^2(X,\Lambda^* X,
g)$ and $\langle\cdot, \cdot\rangle_g$ its scalar product associated to
the metric. Since the metric is complete, the operator $D_g:=d + \delta_g$ is
essentially self-adjoint on $\cC^\infty_c(X, \Lambda^*X)$. Its domain
is $\Gr:=\Dc(d)\cap \Dc(\delta_g)$. The Laplacian in $\Hr$ is $\Delta_g:=D^2$. 

Consider some metric $g'$ on $X$ satisfying 
\begin{align}
\label{e:asympto}
\alpha(x) g_x \leq g'_x \leq \beta(x) g_x,& \mbox{ where
} \alpha(x)  \mbox{ and } \beta(x) \mbox{ tends to } 1, \mbox{ as }
d_g(x, x_0)\rightarrow \infty,
\end{align}
We denote with the subscript $g'$ the object obtained under the metric $g'$.
From \eqref{e:asympto}, we infer there is a unique $a\in\Bc(\Hr)$, $a\geq c>0$,  such that $\langle u, v\rangle_{g'}=\langle u, a v\rangle_g$ for all $u,v\in \Hr$. We now suppose additionally that $a$ is in $\cC^1(D)$,  meaning there is $C$ so that:
\begin{align}
\label{e:asympto2}
\left|\big\langle  Df, a g \big\rangle_g - \big\langle  a f, D g \big\rangle_g \right| \leq C \|f\|_g \|g\|_g , \mbox{ for all } f,g\in \Gr.
\end{align}
Since $a$ is invertible, one obtains easily that $a^{-1}\in\cC^1(D)$ and $a^{\pm 1}\Gr\subset \Gr$.

\begin{proposition}\label{p:stabess}
Under the Assumptions \eqref{e:asympto} and \eqref{e:asympto2}, the Laplace operators $\Delta_g$ and $\Delta_{g'}$, acting on forms, have the same essential spectra. 
\end{proposition}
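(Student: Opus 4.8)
\emph{Plan.} The idea is to move everything onto the single Hilbert space $\Hr=L^2(X,\Lambda^*X,g)$ and to exhibit a compact resolvent difference, so that the elementary principle recalled at the beginning of the Appendix applies. By \eqref{e:asympto} the norms of $g$ and $g'$ are equivalent, so $\Hr':=L^2(X,\Lambda^*X,g')$ is $\Hr$ as a topological vector space, and \eqref{e:asympto} says precisely that the bundle endomorphism $a$ satisfies $\|a_x-1\|\to0$ as $d_g(x,x_0)\to\infty$, while \eqref{e:asympto2} says $a\in\cC^1(D_g)$. Since $a$ is bounded and bounded below, $a^{\pm1/2}\in\cC^1(D_g)$ as well, whence $a^{\pm1/2}\Gr=\Gr$. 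The map $a^{1/2}\colon\Hr'\to\Hr$ is unitary, so $\Delta_{g'}$ is unitarily equivalent to $\tilde\Delta:=a^{1/2}\Delta_{g'}a^{-1/2}=\tilde D^2$ on $\Hr$, where $\tilde D:=a^{1/2}D_{g'}a^{-1/2}$ is self-adjoint with domain $\Gr=\Dc(D_g)$; it therefore suffices to compare $\Delta_g$ with $\tilde\Delta$. Using $\delta_{g'}=a^{-1}\delta_g a$ one computes $\tilde D-D_g=a^{1/2}[d,a^{-1/2}]+a^{-1/2}[\delta_g,a^{1/2}]=:B$, a zeroth order (bundle endomorphism) operator in $\Bc(\Hr)$ built out of the first commutators of $a^{\pm1/2}$ with $d$. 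I would work with the first order operators $D_g,\tilde D$ rather than directly with the Laplacians, because $\cC^1$ regularity of $a$ controls the domains of $D_g$ and $D_{g'}$ but not a priori those of their squares.

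Since $(\Delta_g+1)^{-1}=\tfrac1{2i}\big((D_g-i)^{-1}-(D_g+i)^{-1}\big)$ and likewise for $\tilde\Delta$, it is enough to show $(\tilde D\mp i)^{-1}-(D_g\mp i)^{-1}\in\Kc(\Hr)$; I treat the sign $-i$, the other being identical. By the resolvent identity this difference is $-(D_g-i)^{-1}B(\tilde D-i)^{-1}$. Fix a smooth cutoff $\chi_R$ equal to $1$ on $\{d_g(\cdot,x_0)\le R\}$, supported in $\{d_g(\cdot,x_0)\le R+1\}$, with $\|d\chi_R\|_\infty\le2$, and split $B=\chi_R B+(1-\chi_R)B$. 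The operator $(D_g-i)^{-1}\chi_R$ is compact (by interior elliptic regularity $(D_g-i)^{-1}$ gains one derivative, $\chi_R$ localizes to a relatively compact set, and Rellich applies), so $(D_g-i)^{-1}\chi_R B(\tilde D-i)^{-1}$ is compact. It then remains to prove $\big\|(D_g-i)^{-1}(1-\chi_R)B(\tilde D-i)^{-1}\big\|\to0$ as $R\to\infty$; granting this, $(D_g-i)^{-1}B(\tilde D-i)^{-1}$ is a norm limit of compact operators, hence compact, and the proposition follows.

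The tail estimate is the heart of the matter. First I would isolate the genuinely decaying part of $B$: writing $a^{\pm1/2}=1+(a^{\pm1/2}-1)$ produces contributions carrying a scalar factor $a^{\pm1/2}-1$, for which $\|(1-\chi_R)(a^{\pm1/2}-1)\|_\infty\to0$ directly. The remaining ``bare'' pieces $(D_g-i)^{-1}(1-\chi_R)[d,a^{-1/2}](\tilde D-i)^{-1}$ and its analogue with $[\delta_g,a^{1/2}]$ involve only $\nabla a$, which \eqref{e:asympto2} bounds but does not make decay; here one integrates by parts. Writing $[d,a^{-1/2}]=d\circ(a^{-1/2}-1)-(a^{-1/2}-1)\circ d$, one commutes the outer $d$ onto the resolvent, using that $(D_g-i)^{-1}d=\big(\delta_g(D_g+i)^{-1}\big)^{*}$ and $d(\tilde D-i)^{-1}$, $\delta_g(\tilde D-i)^{-1}$ are bounded — all of which follow from the degree orthogonality $\langle du,\delta_{\bullet}u\rangle=0$, giving $\|D_{\bullet}u\|^2=\|du\|^2+\|\delta_{\bullet}u\|^2$ up to factors $a^{\pm1/2}$. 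Each commutation either leaves the decaying multiplier $a^{-1/2}-1$ sandwiched between bounded operators, or produces a factor $d\chi_R$ supported in the annulus $\{R\le d_g\le R+1\}$, where $a^{-1/2}-1$ is already small; collecting the estimates yields the required $o(1)$ bound, and likewise with $+i$. The main obstacle is exactly this step: extracting compactness from $B$ when the first derivatives of the metric are only bounded, the point being that one may trade a derivative of $a$ for the decaying quantity $a-1$ at the cost of harmless lower order terms. Alternatively, the statement is an instance of the $C^{*}$-algebraic stability criterion of \cite{GG}, $\Delta_g$ and $\Delta_{g'}$ being affiliated to the same algebra and having the same image modulo the compacts.
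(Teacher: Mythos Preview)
Your proposal is correct and rests on the same idea as the paper's proof: reduce to a compact resolvent difference at the level of the first-order operators $D$, then extract compactness from Rellich--Kondrakov together with the decay of $a-1$ at infinity. The differences are purely in execution.

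First, you conjugate by $a^{1/2}$ so as to compare two operators that are genuinely self-adjoint in the \emph{same} Hilbert space $\Hr$. The paper instead uses the more elementary observation that the essential (Fredholm) spectrum of a closed operator is unchanged under passage to an equivalent Hilbert norm, and simply views $D_{g'}=d+a^{-1}\delta_g a$ and $\Delta_{g'}=D_{g'}^2$ as closed (non-symmetric) operators in $\Hr$. This spares the paper the functional-calculus step ``$a\in\cC^1(D_g)\Rightarrow a^{\pm1/2}\in\cC^1(D_g)$'' that your route needs in order to know $a^{\pm1/2}\Gr=\Gr$.

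Second, you first collapse $\tilde D-D_g$ into a bounded zeroth-order commutator $B$ and then re-expand $[d,a^{-1/2}]=d(a^{-1/2}-1)-(a^{-1/2}-1)d$ to make the decaying factor visible. The paper skips this detour by writing the difference directly as
\[
D_{g'}-D_g \;=\; a^{-1}\delta_g(a-1)\;+\;(a^{-1}-1)\delta_g,
\]
so that the factors $a^{\pm1}-1$ are exposed from the outset. Compactness of each term in $\Bc(\Gr,\Gr^*)$ then follows in one line from $a^{\pm1}-1\in\Kc(\Gr,\Hr)$, which is exactly your $\chi_R$ argument packaged as a single statement. Your approach has the minor advantage of staying with self-adjoint operators throughout; the paper's is shorter and avoids square roots of $a$.
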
 

\proof Using Assumptions \eqref{e:asympto} and \eqref{e:asympto2} and Rellich-Kondrakov, we get
\begin{align}\label{e:staba}
a^{\pm 1}-1 \in \Kc(\Gr, \Hr).
\end{align} 
Consider $D_{g'}:= d + a^{-1}\delta a$. Thanks to  \eqref{e:staba}, 
it is self-adjoint in $\Hr_{g'}$ with domain $\Gr_{g'}:=\Gr$. Note that
$\Delta_{g'}:=(D_{g'})^2$.  Now remark that the essential spectrum of the closed operator
$\Delta_{g'}$ does not depend on the choice of scalar product on
$\Hr_{g'}$. We look at it in $\Hr$, where it is no longer symmetric.
Therefore, in order to show that
$\Delta_g$ and $\Delta_{g'}$ have the same essential spectrum it is enough 
to show that 
\begin{eqnarray*}
(\Delta_g+1)^{-1}- (\Delta_{g'}+1)^{-1} \in \Kc(\Hr).
\end{eqnarray*} 
Now observe that $(\Delta_\cdot +1)^{-1}= (D_\cdot +i)^{-1}(D_\cdot -i)^{-1}$. Hence, it is enough to show that 
\begin{eqnarray*}
(D_g \pm i)^{-1}- (D_{g'}\pm i)^{-1} \in \Kc(\Hr).
\end{eqnarray*} 
This follows form the second part of \eqref{e:staba}, since:
\begin{align*}
 (D_g\pm i)^{-1}- (D_{g'}\pm i)^{-1}= - (D_g\pm i)^{-1}\big(a^{-1}\delta(a-1)+
  (a^{-1}-1)\delta \big) (D_{g'}\pm i)^{-1} 
\end{align*} 
Indeed, for the first term, we have $(a-1)\in\Kc(\Gr, \Hr)$ and 
$a^{-1}\Gr^*\subset \Gr^*$. Therefore, we get  $a^{-1}\delta(a-1)\in \Kc(\Gr,
\Gr^*)$. For the second one, note that $(a^{-1}-1)\in \Kc(\Hr, \Gr^*)$.\qed
\begin{remark}\label{r:stab0}
The condition \eqref{e:asympto2} follows from the stronger
\begin{align}
\label{e:asympto2'}
\left|\big\langle  d^*f, a g \big\rangle_g - \big\langle  a^* f, d g \big\rangle_g \right| \leq C \|f\|_g \|g\|_g , \mbox{ for all } f\in \Dc(d^*) ,g\in \Dc(d).
\end{align}
In particular for a conformal transformation $g'=(1+\rho)g$ with $d\rho$ in $L^\infty$, Assumption 
\eqref{e:asympto2'} is fulfilled.
Note also that \eqref{e:asympto2'} implies that $a^{\pm 1}$ stabilizes $\Dc(d)$ and $\Dc(\delta)$.
\end{remark} 

\begin{remark}\label{r:stab}
For Laplacians acting on functions, the condition
\eqref{e:asympto2} is automatically fulfilled. Moreover, one may treat measurable
metrics, see \cite[Section 9]{GG}. 
We refer to \cite[Lemma 6.3]{GMo1} for similar results concerning
perturbations of a magnetic field.
\end{remark} 

\bibliographystyle{plain} 
 
\end{document}